\documentclass[11pt,a4paper]{scrartcl}
\usepackage{amsmath}
\usepackage{amssymb}
\usepackage{amsthm}
\usepackage[english]{babel}
\usepackage{verbatim}
\usepackage[shortlabels]{enumitem}
\usepackage{tikz-cd}
\usepackage{etoolbox} 
\usepackage[all]{xy}
\usepackage{color}

\newtheoremstyle{mio}%
	{}{} 
	{\itshape}{} 
	{\bfseries}{.}{ } 
	{#1 #2\thmnote{~\mdseries(#3)}} 
\theoremstyle{mio}
\newtheorem{teor}{Theorem}[section]

\newtheorem{prop}[teor]{Proposition}
\newtheorem{lemma}[teor]{Lemma}

\newtheorem{Prop}[teor]{Proposition}

\theoremstyle{definition}
\newtheorem{Def}[teor]{Definition}
\newtheorem{Ex}[teor]{Example}

\newtheorem{oss}[teor]{Remark}

\newcommand{\Br}{\mathrm{Br}}

\newcommand{\degdom}{\mathrm{degdom}}

\newcommand{\ins}[1]{\mathbb{#1}}
\newcommand{\insN}{\ins{N}}
\newcommand{\insZ}{\ins{Z}}
\newcommand{\insQ}{\ins{Q}}
\newcommand{\insR}{\ins{R}}
\newcommand{\inN}{\in\insN}
\newcommand{\inZ}{\in\insZ}

\newcommand{\Z}{\mathbb{Z}}

\newcommand{\Q}{\mathbb{Q}}

\newcommand{\Gal}{\mathrm{Gal}}

\newcommand{\inslim}{\mathcal{L}}

\newcommand{\Int}{\textrm{Int}}

\title{Extending valuations to the field of rational functions using pseudo-monotone sequences}

\author{
G.\ Peruginelli\footnote{Dipartimento di Matematica "Tullio Levi-Civita", University of Padova, Via Trieste, 63
35121 Padova, Italy. E-mail: gperugin@math.unipd.it}\\
D. Spirito\footnote{Dipartimento di Matematica e Fisica, University of Roma Tre, Largo San Leonardo Murialdo 1, 00146 Roma. E-mail: spirito@mat.uniroma3.it}
}

\begin{document}
\maketitle

\begin{abstract}
Let $V$ be a valuation domain with quotient field $K$. We show how to describe all extensions of $V$ to $K(X)$ when the $V$-adic completion $\widehat{K}$ is algebraically closed, generalizing a similar result obtained by Ostrowski in the case of one-dimensional valuation domains. This is accomplished by realizing such extensions by means of pseudo-monotone sequences, a generalization of pseudo-convergent sequences introduced by Chabert. We also show that the valuation rings associated to pseudo-convergent and pseudo-divergent sequences (two classes of pseudo-monotone sequences) roughly correspond, respectively, to the closed and the open balls of $K$ in the topology induced by $V$.\\

\noindent Keywords: pseudo-convergent sequence, pseudo-limit, pseudo-monotone sequence, monomial valuation, extension of valuations.\\

\noindent MSC Primary 12J20, 13A18, 13F30.
\end{abstract}
\section{Introduction}

Throughout the paper, $V$ will denote a valuation domain with quotient field $K$ and maximal ideal $M$, $v$ will denote its valuation and $\Gamma_v$ its value group. We also fix an algebraic closure $\overline{K}$ of $K$. The study of extensions of $V$ is one of the central parts of valuation theory, which naturally splits into the study of algebraic and purely transcendental extensions. The former can be considered a generalization of the fundamental problems of algebraic number theory, and is well-studied through the concepts of inertia, decomposition and ramification (in what is known as \emph{ramification theory}). The latter -- which is essentially the study of extensions of $V$ to function fields -- is less well understood, but plays a main role in several facets and applications of the theory (see \cite{kuhlmann-transactions} and the references therein). The first step of this problem is to classify all the extensions of $V$ to the rational function field $K(X)$.

In case $V$ has rank one, there are two classical approaches to this problem: the most famous one, due to MacLane, uses key polynomials and augmented valuations and works for arbitrary fields $K$, but requires the valuation ring to be discrete \cite{MacLane}; it has been recently  generalized by Vaqui\`e in \cite{Vaquie} for general valuation domains. The second approach, due to Ostrowski, ``makes no discreteness assumptions'' but ``requires an elaborate construction to obtain values of $\overline K$ from those of $K$'', as MacLane acknowledged in his paper \cite[p. 380]{MacLane}. More precisely, Ostrowski showed that, for a given extension $W$ of $V$ to $K(X)$, there exists a \emph{pseudo-convergent sequence} $E=\{s_n\}_{n\inN}\subset \overline K$ with respect to a extension $u$ of $v$ to $\overline K$ (we refer to \S \ref{pmonotone sequences} for the definition) such that the valuation $w$ associated to $W$ is given  by the real limit $w(\phi)=\lim_{n\to\infty}u(\phi(s_n))$, for all $\phi\in K(X)$; for its importance, Ostrowski called this result \emph{Fundamentalsatz} \cite[\S 11, IX, p. 378]{Ostrowski}.
To our knowledge, Ostrowski's Fundamentalsatz seems to have been mostly forgotten (except in the survey \cite{Roquette}), even if pseudo-convergent sequences have enjoyed some success: for example, Kaplansky used them to characterize immediate extensions of a valued field and maximal fields in \cite{kaplansky-maxfield}, and they are linked to the recently introduced notion of approximation type (see  \cite{approxtype}).  

In generalizing Ostrowski's Fundamentalsatz, we realized that when dealing with the general case (i.e., when the rank of $V$ or of the extension of $V$ to $K(X)$ is not one), pseudo-convergent sequences are not enough to construct all extensions of $V$ to $K(X)$ (see Example \ref{Vinfty}): for this reason, we use the more general notion of \emph{pseudo-monotone sequences}, used in \cite{PerPrufer} to  encompass Ostrowski's notion of pseudo-convergent sequence and the two other kinds of sequences introduced by Chabert in 2010 (namely pseudo-divergent and pseudo-stationary sequences) in order to characterize the so-called \emph{polynomial closure} in the context of rings of integer-valued polynomials. We recall that, given a subset $S$ of $V$, the ring of \emph{integer-valued polynomials} over $S$ is classically defined as $\Int(S,V)=\{f\in K[X] \mid f(S)\subseteq V\}$, and the polynomial closure  of $S$ is the largest subset $\overline{S}\subseteq V$ such that $\Int(S,V)=\Int(\overline{S},V)$. One of the main results of Chabert was to prove that, when $V$ has rank one, the polynomial closure is the closure operator associated to a topology on $K$ (extending the case when $V$ is discrete, originally proved by McQuillan in \cite[Lemma 2]{McQ}). Chabert obtained his result by describing $\overline{S}$ through the set of pseudo-limits of the pseudo-monotone sequences contained in $S$.

In this paper, continuing our earlier work in \cite{PS}, we describe the extensions of $V$ to $K(X)$ by means of pseudo-monotone sequences of $K$, generalizing a natural construction of Loper and Werner, who were interested in studying when the ring of integer-valued polynomials over a pseudo-convergent sequence is a Pr\"ufer domain \cite{LopWer}. More precisely, we associate to every pseudo-monotone sequence $E=\{s_{\nu}\}_{\nu\in\Lambda}\subset K$ (see \S \ref{pmonotone sequences} for the definition) the valuation domain
\begin{equation*}
V_E=\{\phi\in K(X) \mid \phi(s_\nu)\in V, \textnormal{ for all sufficiently large }\nu\in\Lambda\}.
\end{equation*}
We first study the properties of $V_E$ in relation with the properties of $E$; subsequently, we analyze when and how it is possible to associate to an arbitrary extension a pseudo-monotone sequence. Our main result (Theorem \ref{teor:fundamentalsatz}) proves  that every extension  of $V$ to $K(X)$ can be realized in this way if and only if the $v$-adic completion $\widehat{K}$ of $K$ is algebraically closed. In particular, the statement holds if $K$ is algebraically closed, giving a generalization of Ostrowski's result. We also show that, under the same condition, every extension of $V$ to $K(X)$ which is not immediate is a monomial valuation, a natural way of constructing extensions to the field of rational functions (see \S \ref{monomial}).  

\medskip

The structure of the paper is as follows. In Section \ref{notation}, after settling the notation used throughout the paper, and the notions of monomial valuation and divisorial ideal, we give the definition of pseudo-monotone sequence in a general valued field $(K,v)$; we note that Chabert's original definitions of pseudo-divergent and pseudo-stationary sequences  were  given only for a rank one valuation, but they easily extend to the general case. We then introduce the notions of pseudo-limit, breadth ideal and gauge separately for the three different types of pseudo-monotone sequences: pseudo-convergent sequences (\S \ref{pcv}), pseudo-divergent sequences (\S \ref{pdv}) and pseudo-stationary sequences (\S \ref{pst}). In the last part of that section, we characterize pseudo-limits and breadth ideals of pseudo-monotone sequences according to their type (Lemmas \ref{pseudolimiti} and \ref{lemma:carattbreadth}).

In Section \ref{valuations associated pmt} we show that the sequence of values  of the images under a rational function of a pseudo-monotone sequence is eventually monotone (Proposition \ref{values rat func on psm seq}); the result is accomplished by introducing the notion of \emph{dominating degree} of a rational function $\phi\in K(X)$ with respect to a pseudo-monotone sequence $E\subset K$ (Definition \ref{def dominating degree}), which roughly speaking counts the number of roots of $\phi$ in $\overline{K}$ which are pseudo-limits of $E$. 
Through this result, we show that, for each pseudo-monotone sequence $E$, the ring $V_E$ 
is a valuation domain of $K(X)$ extending $V$ (Theorem \ref{VE valuation domain}). We then describe the main properties of $V_E$ (residue field, value group and associated valuation) in Proposition \ref{prop:description}, and show that the image of a pseudo-convergent or a pseudo-divergent sequence under a rational function is eventually either  pseudo-convergent or pseudo-divergent (Proposition \ref{image pm under a rational function}), improving the analogous result of Ostrowski \cite[III, \S64, p. 371]{Ostrowski} on images of pseudo-convergent sequences under polynomial mappings.

In Section \ref{sect:fundamentalsatz}, we associate to each extension $W$ a subset $\inslim(W)$ of $K$ (which corresponds to the notion of pseudo-limit of a pseudo-monotone sequence) and show that if $K$ is algebraically closed, then $\inslim(W)$ (if nonempty) uniquely determines $W$ (Proposition \ref{prop:uglim}). 
In Section \ref{section:equivalence}, we use the results of the previous section to completely describe (for any field $K$) when two different pseudo-monotone sequences of $K$ give rise to the same associated extension of $V$ to $K(X)$. Subsequently, in Section \ref{generalized Fundamentalsatz} we give the proof of the aforementioned main Theorem \ref{teor:fundamentalsatz}.

In the final Section \ref{geometrical}, we illustrate the different containments which may occur among the valuation domains $V_E$ of $K(X)$. We conclude with a modern proof of Ostrowski's Fundamentalsatz (Theorem \ref{Ostrowski fundamentalsatz}).

\section{Background and notation}\label{notation}
For an extension $U$ or $W$ of $V$ to a field $F$ containing $K$, we denote the associated valuation with the corresponding lower case letter (i.e., $u$ or $w$, respectively). We recall that an extension $V\subset U$ is \emph{immediate} if $U$ and $V$ have the same value group and same residue field.  We denote by $\widehat{K}$ and $\widehat{V}$, respectively, the \emph{completion} of $K$ and $V$ with respect to the topology induced by the valuation $v$. The elements of $\widehat{K}$ can be constructed as limits of Cauchy sequences $\{a_\nu\}_{\nu\in\Lambda}$, where $\Lambda$ is a well-ordered set; $\Lambda$ is not necessarily countable, but can be considered of cardinality equal to the cofinality of the ordered set $\Gamma_v$. See for example \cite[Section 2.4]{prestelengler} for the details of the construction. For a sequence $\{s_\nu\}_{\nu\in\Lambda}$ of elements in $K$, the set of indices $\Lambda$ will always be a well-ordered set without a maximum.

\subsection{Monomial valuations}\label{monomial}

We recall the definition of monomial valuations, a standard way of extending a valuation $v$ of $K$ to $K(X)$.
\begin{Def}
Let $\Gamma$ be a totally ordered group containing $\Gamma_v$, and let  $\alpha\in K$ and $\delta\in\Gamma$. For every polynomial $f(X)=a_0+a_1(X-\alpha)+\ldots+a_n(X-\alpha)^n\in K[X]$, define
\begin{equation*}
v_{\alpha,\delta}(f)=\inf\{v(a_i)+i\delta \mid i=0,\ldots,n\},
\end{equation*}
and, for a rational function $\phi=f/g$ (with $f,g$ polynomials), define $v_{\alpha,\delta}(\phi)=v_{\alpha,\delta}(f)-v_{\alpha,\delta}(g)$. Then, $v_{\alpha,\delta}$ is a valuation on $K(X)$, and it is called \emph{monomial valuation} \cite[Chapt. VI, \S. 10, Lemma 1]{bourbaki-inglese}. We denote by $V_{\alpha,\delta}$ the associated valuation domain of $K(X)$. 
\end{Def}
For example, the Gaussian extension $v_G=v_{0,0}$ of $v$, defined as  $v_G(\sum_{i\geq0}a_i X^i)=\inf_i\{v(a_i)\}$, is a monomial valuation. In general, $v_{\alpha,\delta}$ is residually transcendental over $v$ (i.e., the residue field of $V_{\alpha,\delta}$ is transcendental over the residue field of $V$) if and only if  $\delta$ is torsion over $\Gamma_v$ \cite[Lemma 3.5]{PerPrufer}. Furthermore, every residually transcendental extension of $V$ can be written as $W'\cap K(X)$, where $W'$ is a monomial valuation domain of $\overline{K}(X)$ with respect to an extension $w$ of $v$ to $\overline K$  (\cite{AP,APZ1}).

\subsection{Divisorial ideals}\label{divisorial ideals}
Let $V$ be a valuation domain with maximal ideal $M$, and let $\mathcal{F}(V)$ be the set of fractional ideals of $V$. The \emph{$v$-operation} (or \emph{divisorial closure}) on $V$ is the map sending each $I\in\mathcal{F}(V)$ to the ideal $I^v$ equal to the intersection of all principal fractional ideals containing it; equivalently, $I^v=(V:(V:I))$, where, for a fractional ideal $I$ of $V$, we set $(V:I)=\{x\in K\mid xI\subseteq V\}$ \cite[Theorem 34.1]{Gilmer}. If $I=I^v$, we say that $I$ is a \emph{divisorial ideal}. 

If the maximal ideal $M$ of $V$ is principal, then each fractional ideal $I$ of $V$ is divisorial; on the other hand, if $M$ is not principal, then (see for example \cite[\textsection 34, Exercise 12, p. 431]{Gilmer})
\begin{equation*}
I^v=\begin{cases}
cV, & \text{if~}I=cM\text{~for some~}c\in K\\
I, & \text{otherwise}.
\end{cases}
\end{equation*}

We say that $I$ is \emph{strictly divisorial} if $I$ is equal to the intersection of all principal fractional ideals \emph{properly} containing it; in particular, each strictly divisorial ideal is divisorial. We now characterize these ideals.
\begin{lemma}\label{lemma:strictlydiv}
$I$ is not strictly divisorial if and only if $I=cM$ for some $c\in K$.
\end{lemma}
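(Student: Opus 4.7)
The plan is to compute the set $J(I) := \bigcap\{aV : I \subsetneq aV\}$ of principal fractional ideals properly containing $I$: by definition, $I$ is strictly divisorial iff $J(I) = I$, so the lemma amounts to showing that $J(I) \supsetneq I$ exactly when $I$ is of the form $cM$.

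For the implication ``$I = cM$ implies $J(I) \supsetneq I$'', note first that $J$ respects scaling, since $aV \supsetneq cI_0 \iff (a/c)V \supsetneq I_0$ gives $J(cI_0) = c \cdot J(I_0)$. It therefore suffices to show $J(M) = V$, which is strictly larger than $M$. The inclusion $aV \supseteq M$ is equivalent to $v(a)$ being a lower bound for the positive cone $\Gamma_v^+ := \{g \in \Gamma_v : g > 0\}$; adding the requirement of strict containment forces $v(a) \leq 0$, regardless of whether $\Gamma_v^+$ has a minimum: if $M = \pi V$ is principal, strictness rules out the value $v(\pi)$, while in the non-principal case $\Gamma_v^+$ has no positive lower bound to begin with. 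Hence $J(M) = \bigcap_{v(a)\leq 0} aV = V \supsetneq M$.

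For the reverse implication, suppose $I$ is not strictly divisorial. If $I$ is not divisorial, then the formulas for $I^v$ recalled before the lemma force $M$ to be non-principal and $I = cM$. Otherwise $I = \bigcap\{aV : I \subseteq aV\}$ is divisorial, and this intersection differs from $J(I)$ by at most one member --- namely $aV = I$, which belongs to it iff $I$ is principal. Since $J(I) \supsetneq I$, this missing term must exist, so $I = a_0V$ is principal. Then $J(a_0V) = \{y \in K : v(y) \geq v(a) \text{ for all } v(a) < v(a_0)\}$, which equals $a_0V$ unless $v(a_0)$ has an immediate predecessor in $\Gamma_v$; such a predecessor exists iff $\Gamma_v^+$ has a minimum, iff $M = \pi V$ is principal, in which case $I = a_0V = (a_0\pi^{-1})\, M$ is again of the required form.

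The main subtlety is a uniform treatment of the principal- and non-principal-$M$ cases. The unifying observation is that, independently of whether $\min \Gamma_v^+$ exists, the condition ``$aV$ properly contains $M$'' collapses to $v(a) \leq 0$: in the principal case the minimum positive element is excluded by strictness, and in the non-principal case it never appears. The same dichotomy reappears in the converse, distinguishing the two situations where the overshoot $J(a_0V) \supsetneq a_0V$ can or cannot occur.
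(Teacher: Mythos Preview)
Your proof is correct and follows essentially the same route as the paper's: both arguments reduce to the dichotomy between $M$ principal and non-principal, using the divisorial-closure formula for the non-principal case and the existence of an immediate predecessor (equivalently, $\min\Gamma_v^+$) for the principal case. The only difference is organizational: the paper cases first on whether $I$ is principal, whereas you handle the forward implication by a uniform computation of $J(M)=V$ and then, in the converse, case on whether $I$ is divisorial---but the underlying ideas are identical.
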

\begin{proof}
Suppose first that $I$ is not principal. Then, $I$ is strictly divisorial if and only if it is divisorial; furthermore, $I$ is not divisorial if and only if $I=cM$ for some $c\in K$ and $M$ is not principal, by the above remark. Hence, the claim holds in this case.

Suppose that $I=c'V$ is principal: if also $I=cM$ for some $c$, then $cV$ is the minimal principal ideal properly containing $I$, and $I$ is not strictly divisorial. Conversely, if $I$ is not strictly divisorial,  then there is a minimal principal ideal $cV$ properly containing $I$; this implies that $c'/c$ is the generator of the maximal ideal of $V$, and so $I=cM$.
\end{proof}

\subsection{Pseudo-monotone sequences}\label{pmonotone sequences}
The central concept of the paper is the following, which along with Ostrowski's notion of pseudo-convergent sequence includes also other two related notions introduced by Chabert in \cite{ChabPolClo}.
\begin{Def}
Let $E=\{s_\nu\}_{\nu\in\Lambda}\subset K$ be a sequence. We say that the sequence $E$ is:
\begin{itemize}
\item[-] \emph{pseudo-convergent} if $v(s_{\rho}-s_\nu)<v(s_{\sigma}-s_{\rho})$ for all $\nu<\rho<\sigma\in\Lambda$;
\item[-] \emph{pseudo-divergent} if $v(s_{\rho}-s_\nu)>v(s_{\sigma}-s_{\rho})$ for all $\nu<\rho<\sigma\in\Lambda$;
\item[-] \emph{pseudo-stationary} if $v(s_\nu-s_\mu)=v(s_{\nu'}-s_{\mu'})$ for all $\nu\neq\mu\in\Lambda$, $\nu'\neq\mu'\in\Lambda$.
\end{itemize}
If $E$ satisfies any of these definitions, we say that $E$ is a \emph{pseudo-monotone sequence} (\cite{PerPrufer}). We say that $E$ is \emph{strictly pseudo-monotone} if $E$ is either pseudo-convergent or pseudo-divergent. If $E$ and $F$ are two pseudo-monotone sequences that are either both pseudo-convergent, both pseudo-divergent or both pseudo-stationary we say that $E$ and $F$ are \emph{of the same kind}.
\end{Def}

We note that Ostrowski's and Chabert's original definitions required the above condition to be valid only for all $\nu$ large enough. Instead, we adopt Kaplansky's convention that the condition is valid for all $\nu$, both since it is not restrictive for our purposes (see Definition \ref{def:VE}) and in view of the following remark. If $E=\{s_\nu\}_{\nu\in\Lambda}$ is a sequence in $K$ and $E'=\{s_\nu\}_{\nu\geq N}$ is pseudo-monotone for some $N\in\Lambda$, we say that $E$ is \emph{eventually pseudo-monotone} (and analogously for eventually pseudo-convergent, pseudo-divergent and pseudo-stationary).

\begin{oss}\label{oss:rigidity}
Strictly pseudo-monotone sequences are ``rigid'', in the sense that, given a set $E$, there is at most one way to index $E$ to make it pseudo-monotone. Indeed, if the indexing $\{s_\nu\}_{\nu\in\Lambda}$ makes $E$ pseudo-convergent, then the equality  $v(s_\nu-s_\mu)=v(s_\nu-s_{\mu'})$ (for $\mu\neq\mu'$) implies that both $\mu$ and $\mu'$ are greater than $\nu$; thus, the elements of $E$ that appear before $s_\nu$ are exactly the $t$ such that $v(s_\nu-t)\neq v(s_\nu-t')$ for all $t\neq t'$, and this condition depends only on the set $E$. In the same way, if $E$ is pseudo-divergent, then the elements of $E$ appearing after $s_\nu$ are the $t$ such that $v(s_\nu-t)\neq v(s_\nu-t')$ for all $t\neq t'$. In particular, if $E=\{s_\nu\}_{\nu\in\Lambda}$ and $F=\{t_\nu\}_{\nu\in\Lambda}$ are two strictly pseudo-monotone sequences that are equal as sets, then $s_\nu=t_\nu$ for every $\nu\in\Lambda$.

On the other hand, pseudo-stationary sequences are ``flexible'': any permutation of $E=\{s_\nu\}_{\nu\in\Lambda}$ is again pseudo-stationary. For this reason, it may be more apt to call them ``pseudo-stationary \emph{sets}'', but we will continue to treat them as sequences for analogy with the strictly pseudo-monotone case.
\end{oss}

In this paper, we shall treat pseudo-monotone sequences in a general framework in order to build extensions of the valuation domain $V$ to the field of rational functions $K(X)$, and to give theorems valid for all kind of such sequences. However, there are slight differences in how the main concepts concerning pseudo-monotone sequences (for example the breadth ideal, the pseudo-limit and the gauge) are defined in each of the three cases; hence, we shall describe them separately.

\subsubsection{Pseudo-convergent sequences}\label{pcv}

Let $E=\{s_\nu\}_{\nu\in\Lambda}$ be a pseudo-convergent sequence in $K$. Then, if $\nu$ is fixed, the value $v(s_{\rho}-s_{\nu})$, for $\rho>\nu$, does not depend on $\rho$. We denote by $\delta_\nu\in\Gamma_v$ this value; the sequence $\{\delta_\nu\}_{\nu\in\Lambda}$ (which, by definition, is a strictly increasing sequence in $\Gamma_v$) is called the \emph{gauge} of $E$. 

The \emph{breadth ideal} $\Br(E)$ of $E$ is the set
\begin{align*}
\Br(E)= & \{x\in K\mid v(x)>\delta_\nu\text{~for all~}\nu\in\Lambda\};
\end{align*}
this set is always a fractional ideal of $K$. If $c_\nu=s_{\rho}-s_\nu$, for some $\rho>\nu$, then  $\Br(E)=\bigcap_{\nu\in\Lambda}c_\nu V$. If $\Br(E)$ is a principal ideal, say generated by an element $c\in K$, then $\delta_\nu$ converges to an element $\delta\in\Gamma_v$ (and, clearly, $v(c)=\delta$). When this happens, we call $\delta$ the \emph{breadth} of $E$. Note, however, that the breadth of a pseudo-convergent sequence may not always be defined; if $V$ has rank $1$ (that is, if $\Gamma_v$ can be embedded as a totally ordered group into $\insR$), then  $\delta_\nu$ always converges to an element $\delta\in\insR$, which may not belong to $\Gamma_v$. See \cite{PS} and \cite[Lemma 2.3]{PerPrufer} for this case.

An element $\alpha\in K$ is a \emph{pseudo-limit} of $E$ if $v(\alpha-s_\nu)=\delta_\nu$ for all $\nu\in\Lambda$ or, equivalently, if $v(\alpha-s_\nu)<v(\alpha-s_{\rho})$ for all $\nu<\rho\in\Lambda$. It also suffices that these conditions hold only for $\nu\geq N$, for some $N\in\Lambda$. If the gauge $\{\delta_\nu\}_{\nu\in\Lambda}$ is cofinal in $\Gamma_v$ (or, equivalently, if $E$ is a Cauchy sequence), then it is well-known that $E$ converges to a unique pseudo-limit $\alpha$ in the completion $\widehat K$, which in this case is called simply \emph{limit}.

Following Kaplansky \cite{kaplansky-maxfield}, we say that $E$ is of \emph{transcendental type} if $v(f(s_\nu))$ eventually stabilizes for every $f\in K[X]$; on the other hand, if $v(f(s_\nu))$ is eventually increasing for some $f\in K[X]$, we say that $E$ is of \emph{algebraic type}. As we have already remarked in \cite{PS}, it follows from the work of Kaplansky in \cite{kaplansky-maxfield} that a pseudo-convergent sequence $E\subset K$ is of algebraic type if and only if $E$ admits pseudo-limits in $\overline{K}$, with respect to some extension $u$ of $v$.  Note that any pseudo-convergent sequence satisfies either one of these two conditions, because the image of a pseudo-convergent sequence by a polynomial is an eventually pseudo-convergent sequence (see \cite[III, \S64, p. 371]{Ostrowski} or  Proposition \ref{image pm under a rational function} below).

\subsubsection{Pseudo-divergent sequences}\label{pdv}

Let $E=\{s_\nu\}_{\nu\in\Lambda}$ be a pseudo-divergent sequence in $K$. Symmetrically to the case of pseudo-convergent sequences, for a fixed $\nu$, we have that $v(s_{\rho}-s_{\nu})$ is constant for all $\rho<\nu$; if $\nu$ is not the minimum of $\Lambda$, we denote by $\delta_\nu\in\Gamma_v$ this value. The sequence $\{\delta_\nu\}_{\nu\in\Lambda}$  is a strictly decreasing sequence in $\Gamma_v$, called the \emph{gauge} of $E$.

The \emph{breadth ideal} $\Br(E)$ of $E$ is the set
\begin{align*}
\Br(E)= & \{x\in K\mid v(x)>\delta_\nu\text{~for some~}\nu\in\Lambda\};
\end{align*}
this set is a fractional ideal of $K$ if and only if the gauge of $E$ is bounded from below, while otherwise $\Br(E)=K$. In particular,  unlike in the pseudo-convergent case, $\Br(E)$ may  not be a fractional ideal. If for each non-minimal $\nu\in\Lambda$ we set $c_\nu=s_{\rho}-s_\nu$, for some $\rho<\nu$, then  $\Br(E)=\bigcup_{\nu\in\Lambda}c_\nu V$. Contrary to the case of a pseudo-convergent sequence, it is easily seen that the breadth ideal of a pseudo-divergent sequence is never a principal ideal. However, if $\delta_\nu\searrow\delta$, for some $\delta\in\Gamma_v$, then $\Br(E)=\{x\in K\mid v(x)>c\}=cM$, where $c\in K$ has value $\delta$. As in the case of a pseudo-convergent sequence, when this condition holds we call $\delta$ the \emph{breadth} of $F$.

An element $\alpha\in K$ is a \emph{pseudo-limit} of $E$ if $v(\alpha-s_\nu)=\delta_\nu$ for all (sufficiently large) $\nu\in\Lambda$ or, equivalently, if $v(\alpha-s_\nu)>v(\alpha-s_{\rho})$ for all (sufficiently large) $\nu<\rho\in\Lambda$. Every element of $E$ is a pseudo-limit of $E$: see \cite[\S 2.1.3]{PerPrufer} and Lemma \ref{pseudolimiti} below.

\subsubsection{Pseudo-stationary sequences}\label{pst}

Let $E=\{s_\nu\}_{\nu\in\Lambda}$ be a pseudo-stationary sequence in $K$. Note that the residue field of $V$ is necessarily infinite (see \cite[\S 2.1.2]{PerPrufer}). The element $\delta=v(s_\nu-s_\mu)\in \Gamma_v$, for $\nu\neq\mu$, is called the \emph{breadth} of $E$. In analogy with pseudo-convergent and pseudo-divergent sequences, we define the \emph{gauge} of $E$ to be the constant sequence $\{\delta_\nu=\delta\}_{\nu\in\Lambda}$.

The \emph{breadth ideal} $\Br(E)$ of $E$ is the set
\begin{align*}
\Br(E)= & \{x\in K\mid v(x)\geq\delta\};
\end{align*}
this set is always a principal fractional ideal of $K$, generated by any $c\in K$ whose value is $\delta$. In particular, we can take $c=s_{\nu'}-s_\nu$ for any $\nu'\neq\nu$.

An element $\alpha\in K$ is a \emph{pseudo-limit} of $E$ if $v(\alpha-s_\nu)=\delta$ for all sufficiently large $\nu\in\Lambda$ or, equivalently, if $v(\alpha-s_\nu)=\delta$ for all but at most one $\nu\in\Lambda$. As in the pseudo-divergent case, every element of $E$ is a pseudo-limit of $E$: see \cite[\S 2.1.2]{PerPrufer} and Lemma \ref{pseudolimiti} below.

\subsection{Pseudo-limits and the breadth ideal}
In general, if $E\subset K$ is a pseudo-monotone sequence, we denote the set of pseudo-limits of $E$ in $K$ by  $\inslim_E$ and the breadth ideal by $\Br(E)$ (or $\inslim_E^v$ and $\Br_v(E)$, respectively, if we need to underline the valuation). We will constantly use the following trivial remark: if $u$ is an extension of $v$ to an overfield $F$ of $K$, then $E$ is a pseudo-monotone sequence in the valued field $(F,u)$; in particular, $\mathcal L_F^u$ will denote the set of pseudo-limits of $E$ in the valued field $(F,u)$. We use the notation $\inslim_E$ and $\Br(E)$ also in the case $E$ is only eventually pseudo-monotone.

The first part of the next result generalizes the classical result of Kaplansky for pseudo-convergent sequences (\cite[Lemma 3]{kaplansky-maxfield}) to pseudo-monotone sequences. The proof is actually the same, but for the sake of the reader we give it here.
\begin{lemma}\label{pseudolimiti}
Let $E=\{s_\nu\}_{\nu\in\Lambda}\subset K$ be a pseudo-monotone sequence and let $\alpha\in K$ be a pseudo-limit of $E$. Then the set of pseudo-limits $\mathcal L_E$ of $E$ is equal to $\alpha+\Br(E)$. Moreover, $E\cap \mathcal{L}_E=\emptyset$ if $E$ is a pseudo-convergent sequence and $E\subset\mathcal{L}_E$ if $E$ is either pseudo-divergent or pseudo-stationary.
\end{lemma}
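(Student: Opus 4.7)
The plan is to verify both inclusions $\alpha + \Br(E) \subseteq \mathcal{L}_E$ and $\mathcal{L}_E \subseteq \alpha + \Br(E)$ via direct ultrametric computations, handling the three types of pseudo-monotone sequence separately, since the gauge and breadth ideal are defined differently in each case.

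For a pseudo-convergent $E$, if $\epsilon\in\Br(E)$, then $v(\epsilon)>\delta_\nu=v(\alpha-s_\nu)$ for every $\nu$, so the ultrametric inequality gives $v((\alpha+\epsilon)-s_\nu)=\delta_\nu$ and $\alpha+\epsilon\in\mathcal{L}_E$. Conversely, for $\beta\in\mathcal{L}_E$, I would fix $\nu$, choose any $\rho>\nu$, and use $v(\alpha-s_\rho)=v(\beta-s_\rho)=\delta_\rho>\delta_\nu$ to obtain $v(\beta-\alpha)\geq\delta_\rho>\delta_\nu$; varying $\nu$ yields $\beta-\alpha\in\Br(E)$. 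For a pseudo-stationary $E$, the argument is analogous but easier, since $\Br(E)=\{x:v(x)\geq\delta\}$ involves the non-strict inequality; membership of $\beta-\alpha$ in $\Br(E)$ is checked at any index $\nu$ outside the (at most one) exceptional indices of $\alpha$ and of $\beta$, where $v(\alpha-s_\nu)=v(\beta-s_\nu)=\delta$, and the converse direction is a direct ultrametric computation.

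The pseudo-divergent case is the main subtlety, since one must extract a strict inequality from agreements $v(\alpha-s_\nu)=v(\beta-s_\nu)=\delta_\nu$, which only yield $v(\beta-\alpha)\geq\delta_\nu$ a priori. My plan is to pick an index $N$ past which both $\alpha$ and $\beta$ satisfy the pseudo-limit condition; then $v(\beta-\alpha)\geq\delta_N$, and since the gauge is strictly decreasing, $\delta_N>\delta_\nu$ for every $\nu>N$, which places $\beta-\alpha$ inside $\Br(E)=\{x:v(x)>\delta_\nu\text{ for some }\nu\}$. The reverse inclusion is a routine ultrametric check: given $\epsilon$ with $v(\epsilon)>\delta_{\nu_0}$, for every $\nu\geq\nu_0$ large enough that $\alpha$ is a pseudo-limit at $\nu$, one has $v(\epsilon)>\delta_{\nu_0}\geq\delta_\nu=v(\alpha-s_\nu)$, hence $v((\alpha+\epsilon)-s_\nu)=\delta_\nu$.

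The two assertions about $E\cap\mathcal{L}_E$ come straight from the definitions. In the pseudo-convergent case, $s_\mu\in\mathcal{L}_E$ would force $v(s_\mu-s_\mu)=\delta_\mu$, which is impossible. In the pseudo-divergent and pseudo-stationary cases, for fixed $\mu$ the definition of the gauge gives $v(s_\mu-s_\nu)=\delta_\nu$ for every $\nu>\mu$ (respectively $v(s_\mu-s_\nu)=\delta$ for every $\nu\neq\mu$), so $s_\mu$ satisfies the pseudo-limit condition eventually, placing it in $\mathcal{L}_E$.
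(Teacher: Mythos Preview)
Your proof is correct and follows essentially the same ultrametric approach as the paper's. The paper handles the converse inclusion in one line, $v(\alpha-\beta)=v(\alpha-s_\nu+s_\nu-\beta)\geq\delta_\nu$, leaving the passage to the strict inequality (needed for the pseudo-convergent and pseudo-divergent breadth ideals) implicit, whereas you spell this out carefully in each case; likewise, for the claims about $E\cap\mathcal{L}_E$ the paper simply cites \cite[\S 2.1.2 \& \S 2.1.3]{PerPrufer}, while you give the short direct arguments. One cosmetic point: in the pseudo-convergent case, rather than invoking $v(s_\mu-s_\mu)=\delta_\mu$ (which relies on the convention $v(0)=\infty$), it is cleaner to take any $\nu>\mu$ and observe that $v(s_\mu-s_\nu)=\delta_\mu\neq\delta_\nu$.
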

\begin{proof}
Let $\beta=\alpha+x$, for some $x\in \Br(E)$. If $E$ is either pseudo-convergent or pseudo-divergent, then it is easy to see that for any $\nu\in\Lambda$ we have
$$v(\beta-s_\nu)=v(\alpha-s_\nu+x)=v(\alpha-s_\nu)=\delta_\nu$$
so that $\beta$ is a pseudo-limit of $E$. If $E$ is pseudo-stationary, then we have $v(\beta-s_\nu)\geq \delta=v(s_\nu-s_\mu)=v(s_\nu-\beta+\beta-s_\mu)$ and therefore for at most one $\nu\in\Lambda$ we may have the strict inequality $v(\beta-s_\nu)>\delta$. So, also in this case $\beta$ is a pseudo-limit of $E$.

Conversely, if $\beta$ is a pseudo-limit of $E$, then $v(\alpha-\beta)=v(\alpha-s_\nu+s_\nu-\beta)\geq\delta_\nu$, so that $\alpha-\beta\in\Br(E)$, as we wanted to show.

We prove the last claim. If $E$ is a pseudo-convergent sequence,  then it is clear (both if $E$ is of algebraic type or of transcendental type). If the sequence $E$ is either  pseudo-divergent or pseudo-stationary, the claim is proved in \cite[\S 2.1.2 \& \S 2.1.3]{PerPrufer}.
\end{proof}

In particular, since pseudo-divergent and pseudo-stationary sequences always admit a pseudo-limit in $K$, in these cases there is no analogue of the notion of pseudo-convergent sequences of transcendental type.

The following result characterizes which fractional ideals of $V$ are breadth ideals for some pseudo-monotone sequence $E$ of $K$, and which cosets are the set of pseudo-limits for some pseudo-monotone sequence.

\begin{lemma}\label{lemma:carattbreadth}
Let $I$ be a fractional ideal of $V$ and let $\alpha\in K$; let $\inslim=\alpha+I$.
\begin{enumerate}[(a)]
\item $\inslim=\inslim_E$ for some pseudo-convergent sequence $E$ if and only if $I$ is strictly divisorial; in particular, if the maximal ideal of $V$ is not principal this happens if and only if $I$ is divisorial.
\item $\inslim=\inslim_E$ for some pseudo-divergent sequence if and only if $I$ is not principal.
\item If $V/M$ is infinite, $\inslim=\inslim_E$ for some pseudo-stationary sequence if and only if $I$ is principal.
\end{enumerate}
\end{lemma}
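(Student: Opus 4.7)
I would treat the three cases separately. In each, by Lemma \ref{pseudolimiti} the condition $\inslim_E = \alpha + I$ is equivalent to requiring both that $\alpha$ be a pseudo-limit of $E$ and that $\Br(E) = I$, so the problem reduces to characterizing which fractional ideals arise as breadth ideals of pseudo-monotone sequences of each specified kind. In all three cases the ``only if'' direction can be read off from the formulas for $\Br(E)$ given in \S\ref{pcv}--\S\ref{pst}; only the converses require a construction.

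For (a), if $E$ is pseudo-convergent then $\Br(E)=\bigcap_\nu c_\nu V$ with $\{v(c_\nu)\}$ strictly increasing, so the $c_\nu V$ form a strictly descending chain of principal ideals each properly containing $\Br(E)$, hence $\Br(E)$ is strictly divisorial. Conversely, assume $I$ is strictly divisorial. By Lemma \ref{lemma:strictlydiv}, $I\neq cM$ for every $c\in K$, which unpacks to saying that $L:=\Gamma_v\setminus v(I)$ has no maximum. A transfinite induction on the cofinality of $L$ (using its regularity to pass limit stages) then yields a well-ordered, strictly increasing sequence $\{\delta_\nu\}_{\nu\in\Lambda}\subseteq L$ which is cofinal in $L$; choosing $c_\nu\in K$ of value $\delta_\nu$ and setting $s_\nu:=\alpha-c_\nu$ gives a pseudo-convergent sequence $E$ with gauge $\{\delta_\nu\}$ and pseudo-limit $\alpha$, whose breadth ideal $\{x\in K: v(x)>\delta_\nu\text{ for all }\nu\}$ equals $I$, because every element of $L$ lies strictly below every element of $v(I)$ and the $\delta_\nu$ exhaust $L$ from above. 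The ``in particular'' clause follows from Lemma \ref{lemma:strictlydiv} and the formula for $I^v$ recalled in \S\ref{divisorial ideals}: when $M$ is not principal, the non-divisorial ideals are exactly the $cM$, so divisorial and strictly divisorial coincide.

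Case (b) is dual. Forward: if $\Br(E)=dV$ were principal, fix $\nu$ with $d\in c_\nu V$ and any $\rho>\nu$; then $c_\rho\in\Br(E)=dV$ forces $v(d)\leq v(c_\rho)=\delta_\rho<\delta_\nu\leq v(d)$, a contradiction. Conversely, $I$ non-principal is equivalent to $v(I)$ having no minimum, and by a transfinite construction dual to the one in (a) one extracts a well-ordered, strictly decreasing sequence $\{\delta_\nu\}\subseteq v(I)$ cofinal from below in $v(I)$; then $s_\nu:=\alpha+c_\nu$, with $v(c_\nu)=\delta_\nu$, is pseudo-divergent with breadth ideal $\bigcup_\nu c_\nu V=I$ and pseudo-limit $\alpha$. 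Case (c) is the most direct: forward holds because the breadth ideal of a pseudo-stationary sequence is $cV$ for $c=s_\nu-s_\mu$ by definition, and for the converse, given $I=cV$, the infiniteness of $V/M$ lets us pick a well-ordered (without maximum) family $\{t_\nu\}\subseteq V$ of units with pairwise distinct residues, so that $s_\nu:=\alpha+ct_\nu$ is pseudo-stationary of breadth $v(c)$ with pseudo-limit $\alpha$ and breadth ideal $cV=I$.

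The only real subtlety is the transfinite selection of the gauge $\{\delta_\nu\}$ in cases (a) and (b): at a limit ordinal stage one must still find a value strictly above (resp. below) all previous ones. This is possible because the set of already-chosen elements has cardinality strictly smaller than the cofinality (resp. coinitiality) of $L$ (resp. $v(I)$), and hence cannot be cofinal (resp. coinitial) in it, so there is room to take one more step.
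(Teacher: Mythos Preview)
Your proof is correct and follows essentially the same route as the paper's: reduce via Lemma~\ref{pseudolimiti} to characterizing which fractional ideals arise as breadth ideals, then in each case read off the forward direction from the formulas in \S\ref{pcv}--\S\ref{pst} and construct a suitable sequence for the converse. The only differences are cosmetic: the paper first translates to $\alpha=0$ and works directly from the definition of ``strictly divisorial'' (asserting without detail that one can extract a well-ordered strictly decreasing chain of principal overideals), whereas you keep $\alpha$ general, pass through Lemma~\ref{lemma:strictlydiv} to reformulate the condition as ``$L$ has no maximum'', and spell out the transfinite extraction of the gauge using regularity of the cofinality---a welcome bit of extra care, but not a different argument.
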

\begin{proof}
It is easily seen that, if $\inslim_E\neq\emptyset$ for some pseudo-monotone sequence $E=\{s_\nu\}_{\nu\in\Lambda}$, for every $\beta\in K$ the set $\beta+\inslim_E$ is the set of pseudo-limits of $\beta+E=\{\beta+s_\nu\}_{\nu\in\Lambda}$; hence, it is enough to prove the claims for $\alpha=0$. Furthermore, by Lemma \ref{pseudolimiti}, under this hypothesis we have $\inslim_E=\Br(E)$, and thus we only need to find which ideals are breadth ideals.

If $I=\Br(E)$ for some pseudo-convergent $E=\{s_{\nu}\}_{\nu\in\Lambda}$, for each $\nu$ let $c_{\nu}=s_{\rho}-s_{\nu}$, for some $\rho>\nu$; then $I=\bigcap_{\nu}c_\nu V$, and each $c_\nu V$ properly contains $I$. Therefore $I$ is a strictly divisorial ideal. Conversely, if $I=\bigcap_{a\in A} aV$, where for each $a\in A$ we have $I\subsetneq aV$, we can take a well-ordered subset $\{a_\nu\}_{\nu\in\Lambda}$ such that $I=\bigcap_\nu a_\nu V$ and $a_{\rho}V\subsetneq a_\nu V$ for all $\rho>\nu$; then, $\{a_\nu\}_{\nu\in\Lambda}$ is a pseudo-convergent sequence having 0 as a pseudo-limit and breadth ideal $I$. The last remark follows from Lemma \ref{lemma:strictlydiv}.

Likewise, if $I=\Br(E)$ for some pseudo-divergent $E=\{s_{\nu}\}_{\nu\in\Lambda}$, for each $\nu$ let $c_{\nu}=s_{\rho}-s_{\nu}$, for some $\rho<\nu$; then $I=\bigcup_{\nu}c_\nu V$, while if $I$ is not principal we can find a well-ordered sequence $E=\{a_\nu\}_{\nu\in\Lambda}\subset V$ which generates $I$ and such that $a_\nu V\subset a_{\rho}V$ for every $\nu<\rho$, so that $E$ is a pseudo-divergent sequence  and  $I$ is its breadth ideal.

If $I=\Br(E)$ for some pseudo-stationary sequence $E=\{s_{\nu}\}_{\nu\in\Lambda}$, then $I=(s_\nu-s_{\mu})V$, for any $\nu\not=\mu$; conversely, if $I=cV$, then we can find a well-ordered set $E=\{s_\nu\}_{\nu\in\Lambda}$ of distinct elements of valuation $v(c)$ whose cosets modulo $cM$ are different (because the residue field of $V$ is infinite); then, $E$ is pseudo-stationary with breadth ideal $E$.
\end{proof}

\section{Valuation domains associated to pseudo-monotone sequences}\label{valuations associated pmt}

Let $\phi\in K(X)$ be a rational function: if $\alpha\in\overline{K}$ is a zero or a pole of $\phi$, we say that $\alpha$ is a \emph{critical point} of $\phi$. We denote by $\Omega_\phi$ the multiset of critical points of $\phi$. Let $S=\{\alpha_1,\ldots,\alpha_k\}$ be a submultiset of $\Omega_\phi$. The \emph{weighted sum} of $S$ is the sum $\sum_{\alpha_i\in S}\epsilon_i$, where $\epsilon_i=1$ if $\alpha_i$ is a zero of $\phi$ and $\epsilon_i=-1$ if $\alpha_i$ is a pole of $\phi$. The \emph{$S$-part} of $\phi$ is the rational function $\phi_S(X)=\prod_{\alpha_i\in S}(X-\alpha_i)^{\epsilon_i}$, where $\epsilon_i$ is as above.

The following definition generalizes \cite[Definition 3.5]{PS} to pseudo-monotone sequences.
\begin{Def}\label{def dominating degree}
Let $E=\{s_\nu\}_{\nu\in\Lambda}$ be a pseudo-monotone sequence in $K$, let $u$ be an extension of $v$ to $\overline{K}$ and let $\phi\in K(X)$. The \emph{dominating degree} $\degdom_{E,u}(\phi)$ of $\phi$ with respect to $E$ and $u$ is the weighted sum of the elements of $\Omega_{\phi}$  which are pseudo-limits of $E$ with respect to $u$. 
\end{Def}

The next proposition is a generalization to pseudo-monotone sequences of \cite[Theorem 3.3]{PS}; in particular, it shows that the dominating degree does not depend on the chosen extension of $v$ to $\overline{K}$.

\begin{Prop}\label{values rat func on psm seq}
Let $E=\{s_\nu\}_{\nu\in\Lambda}\subset K$ be a pseudo-monotone sequence of gauge $\{\delta_\nu\}_{\nu\in\Lambda}$, and let $\phi\in K(X)$. Let $u$ be an extension of $v$ to $\overline{K}$ and let $\lambda=\degdom_{E,u}(\phi)$.  
Then there exist $\gamma\in\Gamma_v$ and $\nu_0\in\Lambda$ such that for each $\nu\geq\nu_0$ we have
$$v(\phi(s_\nu))=\lambda\delta_\nu+\gamma.$$
Furthermore, if $\beta\in\overline{K}$ is a pseudo-limit of $E$ with respect to $u$, then  $\gamma=u\left(\frac{\phi}{\phi_S}(\beta)\right)$, where $S$ is the set of critical points of $\phi$ which are pseudo-limits of $E$ with respect to $u$. 

Moreover, the dominating degree of $\phi$ does not depend on $u$; that is, if $u'$ is another extension of $v$ to $\overline{K}$, then $\degdom_{E,u}(\phi)=\degdom_{E,u'}(\phi)$.
\end{Prop}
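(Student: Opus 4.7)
The plan is to factor $\phi$ over $\overline{K}$ as $\phi(X)=c\prod_i(X-\alpha_i)^{\epsilon_i}$, with $c\in K^*$, the $\alpha_i\in\overline K$ the critical points of $\phi$ (with multiplicity) and $\epsilon_i=\pm 1$ recording whether $\alpha_i$ is a zero or a pole. Since $\phi$ has finitely many critical points while the $s_\nu$ are pairwise distinct in $K$, one has $\phi(s_\nu)\neq 0,\infty$ for all sufficiently large $\nu$, and for such indices
\begin{equation*}
v(\phi(s_\nu)) = u(c) + \sum_{i}\epsilon_i\, u(s_\nu - \alpha_i).
\end{equation*}
The proof thus reduces to controlling the eventual behaviour of $u(s_\nu-\alpha)$ for each critical point $\alpha$.

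The heart of the argument is a dichotomy: if $\alpha$ is a pseudo-limit of $E$ with respect to $u$, then $u(s_\nu-\alpha)=\delta_\nu$ for all sufficiently large $\nu$, a fact built into the definition of pseudo-limit in each of the three types; otherwise, $u(s_\nu-\alpha)$ is eventually constant. For the second case, fix $\nu_0$ large and expand $u(s_\nu-\alpha)=u\bigl((s_\nu-s_{\nu_0})+(s_{\nu_0}-\alpha)\bigr)$. The hypothesis that $\alpha$ is not a pseudo-limit translates, via the type-specific description of $\Br(E)$ given in \S\ref{pcv}--\S\ref{pst}, into a strict inequality between $u(\alpha-s_{\nu_0})$ and $u(s_\nu-s_{\nu_0})$ (the latter being $\delta_{\nu_0}$ for pseudo-convergent $E$, $\delta_\nu$ for pseudo-divergent $E$, and $\delta$ for pseudo-stationary $E$); the triangle inequality then forces $u(s_\nu-\alpha)=u(\alpha-s_{\nu_0})$, independent of $\nu$.

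Summing the contributions yields
\begin{equation*}
v(\phi(s_\nu)) = \Bigl(\sum_{\alpha_i\in S}\epsilon_i\Bigr)\delta_\nu + \Bigl(u(c) + \sum_{\alpha_i\notin S}\epsilon_i\, u(s_\nu-\alpha_i)\Bigr) = \lambda\delta_\nu+\gamma
\end{equation*}
for all large $\nu$, and $\gamma\in\Gamma_v$ because $v(\phi(s_\nu))$ and $\lambda\delta_\nu$ both lie in $\Gamma_v$. When a pseudo-limit $\beta\in\overline K$ exists, rerunning the triangle-inequality argument with $\beta$ in place of $s_{\nu_0}$ (noting that $\beta$ is not a critical point of $\phi/\phi_S$, so $(\phi/\phi_S)(\beta)$ is a nonzero finite value) gives $u(s_\nu-\alpha_i)=u(\beta-\alpha_i)$ eventually for every $\alpha_i\notin S$; assembling, $\gamma=u\bigl(c\prod_{\alpha_i\notin S}(\beta-\alpha_i)^{\epsilon_i}\bigr)=u\bigl((\phi/\phi_S)(\beta)\bigr)$.

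Finally, independence of $u$ follows because any two extensions of $v$ to $\overline K$ are conjugate under $\Gal(\overline K/K)$: writing $u'=u\circ\sigma$ and using that $\sigma$ fixes each $s_\nu\in K$, one has $u'(\alpha_i-s_\nu)=u(\sigma(\alpha_i)-s_\nu)$, so $\alpha_i$ is a pseudo-limit with respect to $u'$ iff $\sigma(\alpha_i)$ is one with respect to $u$. Since $\sigma$ permutes the multiset of critical points of $\phi\in K(X)$ and preserves the labels $\epsilon_i$ (zeros go to zeros and poles to poles), the weighted sum defining $\degdom$ is unchanged. The main obstacle I anticipate is the bookkeeping in the dichotomy step, where for each of the three kinds of pseudo-monotone sequence one must extract the correct strict inequality from the corresponding form of $\Br(E)$, which involves different quantifiers and either \emph{strict} or \emph{non-strict} inequalities.
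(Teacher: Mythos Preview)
Your proof is correct and follows the same underlying strategy as the paper's: factor $\phi$ over $\overline K$, establish the dichotomy that each $u(s_\nu-\alpha_i)$ is eventually either $\delta_\nu$ (when $\alpha_i$ is a pseudo-limit) or a constant independent of $\nu$, sum, and then use Galois conjugacy of extensions for the independence of $u$. The paper outsources the pseudo-convergent and pseudo-divergent cases to \cite[Theorem~3.2 and Proposition~3.5]{PS} and spells out only the pseudo-stationary case (which matches your computation almost verbatim), so your argument is a more self-contained and uniform rendering of the same idea.
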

\begin{proof}
If $E$ is a pseudo-convergent sequence, then the statement is the same as in \cite[Proposition 3.6]{PS}.

If the sequence $E$ is pseudo-divergent, then the proof is essentially the same as when $E$ is pseudo-convergent:  let $\beta\in K$ be a pseudo-limit of $E$ and let $\Delta=\Delta_E$ be the least final segment of $\insQ\Gamma_v$ containing the gauge of $E$ (if $\Br(E)=K$, just take $\Delta=\Gamma_v$). Take $\tau\in\Gamma_v\cap\Delta$ such that $C=\{s\in\overline K \mid u(s-\beta)\in\Delta\cap(-\infty,\tau)\}$ contains no critical points of $\phi$. 
Then, $s_\nu\in C$ for all large $\nu$ and, by construction, the weighted sum of the subset $S$ of $\Omega_\phi$ of those elements $\alpha$ such that $u(\alpha-\beta)> \Delta\cap(-\infty,\tau)$ is exactly $\lambda=\degdom_{E,u}(\phi)$. Therefore, we can apply  \cite[Theorem 3.3]{PS} to the convex set $\Delta\cap(-\infty,\tau)$, and thus there is a $\nu_0\in\Lambda$ such that for each $\nu\geq \nu_0$ we have
$$v(\phi(s_\nu))=\lambda v(s_\nu-\beta)+\gamma=\lambda\delta_\nu+\gamma,$$
where $\gamma=u\left(\frac{\phi}{\phi_S}(\beta)\right)$, as in the statement of the proposition, again by \cite[Theorem 3.3]{PS}. Since $v(\phi(s_\nu))\in\Gamma_v$ and does not depend on $\beta$, the same happens for $\gamma$. For the final claim the proof is analogous to \cite[Proposition 3.6(c)]{PS}.

If $E$ is pseudo-stationary, we cannot apply directly \cite[Theorem 3.3]{PS}, but the same general method works: let $\phi\in K(X)$ and write $\phi(X)=c\prod_{i=1}^n(X-\alpha_i)^{\epsilon_i}$, where $c\in K$, $\alpha_i\in\overline{K}$ and $\epsilon_i\in\{1,-1\}$. Let $u$ be a fixed extension of $v$ to $\overline{K}$, let $\beta\in K$ be a pseudo-limit of $E$ and let $S$ be the multiset of critical points of $\phi$ which are pseudo-limits of $E$  with respect to $u$. If $\alpha\in\Omega_{\phi}\setminus S$, then $u(s_\nu-\alpha)=u(\beta-\alpha)<\delta$ for all sufficiently large $\nu\in\Lambda$; on the other hand, if $\alpha\in S$, then there is at most one $\nu$ (say $\nu_0$) such that $u(s_{\nu_0}-\alpha)>\delta$, while $u(s_\nu-\alpha)=\delta$ for all $\nu\neq\nu_0$. Hence, for all large $\nu$ we have $u(s_\nu-\alpha)=\delta$. Note that, if $\alpha\notin S$, then $u(\beta-\alpha)$ does not depend on the chosen pseudo-limit $\beta$ of $E$. In particular, $u(s_\nu-\alpha)\leq \delta$ and equality holds if and only if $\alpha$ is a pseudo-limit of $E$, in complete analogy with \cite[Remark 4.7(a)]{PS}. Now, let $\lambda$ be the weighted sum of $S$ (which is equal to $\degdom_{E,u}(\phi)$) and $\gamma=u\left(\frac{\phi}{\phi_S}(\beta)\right)$: then, for all large $\nu$, $s_\nu$ is not a critical point of $\phi$ and we have
\begin{align*}
v(\phi(s_\nu))&=v(c)+\sum_{\alpha\in S}\epsilon_i u(s_\nu-\alpha)+\sum_{\alpha\in\Omega_{\phi}\setminus S}\epsilon_i u(s_\nu-\alpha)=\lambda\delta+\gamma
\end{align*}
It is clear as before that $\gamma\in\Gamma_v$ and does not depend on the chosen pseudo-limit $\beta$ of $E$, by the above remark. To conclude, we only need to prove that the dominating degree of $\phi$ with respect to a pseudo-stationary sequence $E$ does not depend on the extension of $v$ to $\overline{K}$. Let $u,u'$ be two extensions of $v$ to $\overline{K}$. By Lemma \ref{pseudolimiti}, it follows that $\mathcal{L}_E=s+cV$, where a pseudo-limit $s$ of $E$ can be chosen in $K$ and $c\in K$ has value $\delta_E$. Now, by the same Lemma we also have that $\mathcal{L}_E^u=s+cU$ and $\mathcal{L}_E^{u'}=s+cU'$; in particular, $\mathcal{L}_E^u$ and $\mathcal{L}_E^{u'}$ are conjugate under the action of the Galois group of $\overline{K}$ over $K$. It is then clear that $\Omega_{\phi}\cap \mathcal{L}_E^u$ and $\Omega_{\phi}\cap \mathcal{L}_E^{u'}$ are conjugate too, so $\degdom_{E,u}(\phi)=\degdom_{E,u'}(\phi)$, as desired
\end{proof}

Note that by Proposition \ref{values rat func on psm seq} we may drop the suffix $u$ in the dominating degree of a rational function. However, note that given a pseudo-monotone sequence $E\subset K$ without pseudo-limits in $K$, different extensions of $v$ to $\overline K$ give rise to different set of pseudo-limits, which are conjugate under the action of the Galois group of $\overline{K}$ over $K$.

Moreover, if $E=\{s_\nu\}_{\nu\in\Lambda}$ is a pseudo-stationary sequence and $\phi\in K(X)$, the values of $\phi$ on $E$ are eventually constant, namely $v(\phi(s_\nu))=\lambda\delta+\gamma$, where $\lambda=\degdom_E(\phi)$, $\delta=\delta_E$ and $\gamma\in \Gamma_v$, for all sufficiently large $\nu$.

\begin{Def}\label{def:VE}
Let $E=\{s_\nu\}_{\nu\in\Lambda}\subset K$ be a pseudo-monotone sequence. We define
$$V_E=\{\phi\in K(X) \mid \phi(s_\nu)\in V, \textnormal{ for all sufficiently large }\nu\in\Lambda\}.$$
\end{Def}

\begin{teor}\label{VE valuation domain}
Let $E=\{s_\nu\}_{\nu\in\Lambda}\subset K$ be a pseudo-monotone sequence. Then $V_E$ is a valuation domain with maximal ideal $$M_E=\{\phi\in K(X) \mid \phi(s_\nu)\in M, \text{ for all sufficiently large }\nu\in\Lambda\}.$$
\end{teor}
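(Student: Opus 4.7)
The plan is to reduce everything to the analysis of $v(\phi(s_\nu))$ provided by Proposition \ref{values rat func on psm seq}: for every nonzero $\phi\in K(X)$ there exist $\lambda=\degdom_E(\phi)\in\Z$, $\gamma\in\Gamma_v$ and $\nu_0\in\Lambda$ such that $v(\phi(s_\nu))=\lambda\delta_\nu+\gamma$ for all $\nu\geq\nu_0$. Since $\{\delta_\nu\}$ is strictly increasing, strictly decreasing, or constant according as $E$ is pseudo-convergent, pseudo-divergent, or pseudo-stationary, the sequence $v(\phi(s_\nu))$ is eventually monotone (possibly constant). I first observe that $V_E$ is a subring of $K(X)$ containing $V$: any nonzero $\phi$ has finitely many poles, so $\phi(s_\nu)$ is defined for all large $\nu$, and closure under sums, products, and constants in $V$ is then immediate.

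To see that $V_E$ is a valuation domain, I would check that for each nonzero $\phi\in K(X)$ one of $\phi,1/\phi$ lies in $V_E$. Since $\degdom_E(1/\phi)=-\lambda$, it suffices to show that $v(\phi(s_\nu))$ has eventually constant sign. If $\lambda=0$ (which in particular covers the pseudo-stationary case, where $\delta_\nu$ is already constant), the sequence is eventually equal to $\gamma$, and $\phi$ or $1/\phi$ lies in $V_E$ according to whether $\gamma\geq 0$ or $\gamma\leq 0$. If $\lambda\neq 0$ and $E$ is strictly pseudo-monotone, then $\lambda\delta_\nu+\gamma$ is strictly monotone in $\nu$: either it is eventually $\geq 0$, so $\phi\in V_E$, or it remains $<0$ for all large $\nu$, in which case $1/\phi\in V_E$.

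For the description of the maximal ideal, $\phi\in V_E$ is a unit iff $1/\phi\in V_E$, equivalently iff $v(\phi(s_\nu))=0$ for all sufficiently large $\nu$. Hence $M_E$ consists of $0$ together with those nonzero $\phi\in V_E$ for which $v(\phi(s_\nu))$ is not eventually zero. To conclude that any such $\phi$ satisfies $v(\phi(s_\nu))>0$ for all large $\nu$ (equivalently, $\phi(s_\nu)\in M$ eventually), I would again inspect the three shapes of $\lambda\delta_\nu+\gamma$: when the sequence is eventually constant, the constant itself must be positive; when it is eventually strictly increasing, once a term is $\geq 0$ all subsequent ones are strictly positive; when it is eventually strictly decreasing, a value of $0$ would force the next term to be $<0$, contradicting $\phi\in V_E$. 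The reverse inclusion $\{\phi : \phi(s_\nu)\in M \text{ eventually}\}\subseteq M_E$ is immediate since such $\phi$ is in $V_E$ and has $1/\phi\notin V_E$.

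The only real obstacle is the bookkeeping across the three kinds of pseudo-monotone sequence, and within the strictly pseudo-monotone case across the signs of $\lambda$; no idea beyond Proposition \ref{values rat func on psm seq} and the eventual monotonicity of $\lambda\delta_\nu+\gamma$ is actually needed.
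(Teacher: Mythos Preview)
Your proof is correct and follows essentially the same approach as the paper: both rely on Proposition \ref{values rat func on psm seq} to conclude that $v(\phi(s_\nu))=\lambda\delta_\nu+\gamma$ eventually, and then observe that this sequence is definitively positive, definitively negative, or definitively zero, from which the valuation property and the description of $M_E$ follow. Your version is simply more explicit in the case analysis (separating out the signs of $\lambda$ and the three kinds of pseudo-monotone sequences), whereas the paper compresses this into a single sentence.
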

\begin{proof}
The proof is exactly as the one of \cite[Theorem 3.8]{PS}, but we repeat it here for completeness.

The set $V_E$ is a ring since if $\phi(s_\nu),\psi(s_\nu)\in V$ for all sufficiently large $\nu$, then also $(\phi+\psi)(s_\nu)$ and $(\phi\psi)(s_\nu)$ are eventually in $V$.

Let $\phi\in K(X)$. By Proposition \ref{values rat func on psm seq}, we have $v(\phi(s_\nu))=\lambda\delta_\nu+\gamma$, for all $\nu\in\Lambda$ sufficiently large, for some $\lambda\in\Z$ and $\gamma\in\Gamma_v$. In particular, the values of $\phi$ over $E$ are either eventually positive, eventually negative or eventually constant, so either $\phi(s_\nu)\in V$ or $\phi(s_\nu)^{-1}=\phi^{-1}(s_\nu)\in V$ (in both cases for all $\nu\in\Lambda$ sufficiently large), which shows that $V_E$ is a valuation domain.

The claim about the maximal ideal of $V_E$ follows immediately.
\end{proof}

We call $V_E$ the \emph{extension of $V$ associated to the pseudo-monotone sequence $E$}. Note that, if $E$ is a pseudo-convergent sequence and its gauge is cofinal in $\Gamma_v$ (or, equivalently, $E$ is a Cauchy sequence), then $V_E=V_{\alpha}=\{\phi\in K(X) \mid \phi(\alpha)\in\widehat V\}$, where $\alpha$ is the (unique) limit of $E$ in the completion $\widehat{K}$. See \cite{PerTransc} for a study of these valuation domains.

The main properties of the valuation domain $V_E$ and its associated valuation $v_E$ are summarized in Proposition \ref{prop:description} below, which is a generalization of \cite[Proposition 3.11]{PS}. We need to introduce another definition.

\begin{Def}\label{def:PE}
Let $E\subset K$ be a pseudo-monotone sequence. We denote by $\mathcal{P}_E$ the set of the irreducible monic polynomials $p\in K[X]$ which have at least one root in $\overline K$ which is a pseudo-limit of $E$ (with respect to some extension of $v$ to $\overline{K}$), or, equivalently, such that $\degdom_E(p)>0$.
\end{Def}

We note that $\mathcal{P}_E$ is nonempty if and only if $E$ has a pseudo-limit in $\overline{K}$; that is, $\mathcal{P}_E$ is empty if and only if $E$ is a pseudo-convergent sequence of transcendental type. If $E$ is a pseudo-convergent sequence of algebraic type which is also a Cauchy sequence, then $\mathcal{P}_E$ contains a unique element, namely the minimal polynomial of the (unique) limit of $E$ in $\widehat{K}$ (and by Lemma \ref{pseudolimiti} this is the only case in which $\mathcal P_E$ has only one element).
\begin{lemma}\label{minimal degree in PE}
Let $E$ be a strictly pseudo-monotone sequence having a pseudo-limit in $\overline{K}$, and let $p\in K[X]$. Then:
\begin{enumerate}[(a)]
\item\label{minimal degree in PE:inGammav} $v_E(p)\notin\Gamma_v$ if and only if some irreducible factor of $p$ is in $\mathcal{P}_E$;
\item\label{minimal degree in PE:torsion} if $v_E(p)\notin\Gamma_v$, then $v_E(p)$ is not torsion over $\Gamma_v$;
\item\label{minimal degree in PE:mindeg} if $p_1,p_2\in\mathcal P_E$ are of minimal degree, then $v_E(p_1)=v_E(p_2)$.
\end{enumerate}
\end{lemma}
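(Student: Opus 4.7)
The key technical input is Proposition \ref{values rat func on psm seq}: for any nonzero polynomial $p \in K[X]$, there exist $\lambda = \degdom_E(p) \in \Z_{\geq 0}$ (nonnegative, since polynomials only contribute zeros) and $\gamma \in \Gamma_v$ such that $v(p(s_\nu)) = \lambda\delta_\nu + \gamma$ for all sufficiently large $\nu$. Since $E$ is strictly pseudo-monotone, the gauge $\{\delta_\nu\}_{\nu\in\Lambda}$ is strictly monotone in $\Gamma_v$, so the sequence $\{v(p(s_\nu))\}_{\nu}$ is eventually constant if and only if $\lambda = 0$.

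For part (a), I will show $v_E(p) \in \Gamma_v$ if and only if $\degdom_E(p) = 0$. If $\lambda = 0$, choose $c \in K$ with $v(c) = \gamma$: then $v((p/c)(s_\nu)) = 0$ eventually, so $p/c$ is a unit in $V_E$, giving $v_E(p) = v(c) \in \Gamma_v$. Conversely, if $v_E(p) = v(c)$ for some $c \in K$, then $p/c$ and $c/p$ both lie in $V_E$, so $\lambda\delta_\nu + \gamma - v(c) = 0$ for all sufficiently large $\nu$; strict monotonicity of $\{\delta_\nu\}$ forces $\lambda = 0$. The claim then follows because $\degdom_E$ is additive over products of polynomials and $\degdom_E(q) \geq 0$ for any polynomial $q$, so $\degdom_E(p) \neq 0$ if and only if some irreducible monic factor $q$ of $p$ has $\degdom_E(q) > 0$, i.e., lies in $\mathcal{P}_E$.

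Part (b) is an immediate consequence of (a) applied to $p^n$: since $\degdom_E(p^n) = n\degdom_E(p) \neq 0$ for every $n \geq 1$, part (a) gives $v_E(p^n) = nv_E(p) \notin \Gamma_v$, which is exactly the statement that $v_E(p)$ is not torsion modulo $\Gamma_v$.

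For part (c), let $d$ be the minimal degree among elements of $\mathcal{P}_E$, and let $p_1, p_2 \in \mathcal{P}_E$ both be of degree $d$. If $p_1 = p_2$ there is nothing to show; otherwise, since both are monic of degree $d$, the difference $q = p_1 - p_2$ is a nonzero polynomial of degree strictly less than $d$. Suppose toward a contradiction that $v_E(p_1) \neq v_E(p_2)$; without loss of generality $v_E(p_1) < v_E(p_2)$, whence by the ultrametric property $v_E(q) = v_E(p_1) \notin \Gamma_v$ by part (a). Applying (a) to $q$ forces some irreducible factor of $q$ to lie in $\mathcal{P}_E$, but all irreducible factors of $q$ have degree at most $\deg q < d$, contradicting the minimality of $d$. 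The main obstacle is the non-trivial direction in (a) (that $\degdom_E(p) \neq 0$ prevents $v_E(p)$ from being realized by a constant of $K$); this is where the strict monotonicity of the gauge — hence the hypothesis that $E$ is strictly pseudo-monotone — enters essentially. Parts (b) and (c) are then purely formal.
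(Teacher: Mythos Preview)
Your proof is correct and follows essentially the same approach as the paper: both use Proposition~\ref{values rat func on psm seq} together with the strict monotonicity of the gauge to show that $v_E(p)\in\Gamma_v$ iff $\degdom_E(p)=0$, then deduce (b) by passing to powers and (c) from the observation that $p_1-p_2$ has degree below the minimum and hence $v_E(p_1-p_2)\in\Gamma_v$. Your version is slightly more explicit (in particular your contradiction argument for (c) spells out the ultrametric step that the paper leaves implicit), but the ideas are identical.
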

\begin{proof}
Let $p\in K[X]$. Then, $v_E(p)=v(t)$ for some $t\in K$ if and only if $v(t)=v(p(s_\nu))=\degdom_E(p)\delta_\nu+\gamma$ for all $\nu$ sufficiently large (Proposition \ref{values rat func on psm seq}); since $E$ is strictly pseudo-monotone, it follows that $v_E(p)\in\Gamma_v$ if and only if $\degdom_E(p)=0$. Since $\degdom_E(q_1\cdots q_n)=\sum_i\degdom_E(q_i)$, \ref{minimal degree in PE:inGammav} follows. 

\ref{minimal degree in PE:torsion} is a consequence of the previous point applied to the powers $p^n$ of $p$.

Finally, if $p_1,p_2\in\mathcal P_E$ are polynomials of minimal degree, then $p_1-p_2=r$ for some $r\in K[X]$ of lower degree, because $p_1,p_2$ are monic; by minimality, no factor of $r$ belongs to $\mathcal{P}_E$, and so $v_E(r)\in\Gamma_v$. Hence, it must be $v_E(p_1)=v_E(p_2)$ (otherwise $v_E(p_1-p_2)=\min\{v_E(p_1),v_E(p_2)\}$ which is not in $\Gamma_v$), and \ref{minimal degree in PE:mindeg} holds.
\end{proof}

\begin{Prop}\label{prop:description}
Let $E=\{s_\nu\}_{\nu\in\Lambda}\subset K$ be a pseudo-monotone sequence. If $\mathcal P_E$ is nonempty, we let $\Delta_E=v_E(p)$ for some $p\in\mathcal{P}_E$ of minimal degree.
\begin{enumerate}[(a)]
\item\label{prop:description:alg} If $E$ is either  pseudo-convergent of algebraic type or  pseudo-divergent, then  $\Gamma_{v_E}=\Delta_E\Z\oplus\Gamma_v$ (as groups) and $V_E/M_E\cong V/M$.
\item\label{prop:description:trasc} If $E$ is pseudo-convergent of transcendental type, then $V\subset V_E$ is immediate.
\item\label{prop:description:pstaz} If $E$ is  pseudo-stationary, then $\Gamma_{v_E}=\Gamma_v$ and $V_E/M_E$ is a purely transcendental extension of $V/M$: more precisely, $V_E/M_E=V/M(t)$, where $t$ is the residue of $\frac{X-\alpha}{c}$ modulo $M_E$, where $c\in K$ satisfies $v(c)=\delta_E$ and $\alpha\in\inslim_E$.
\item\label{prop:description:mindeg} If $E$ is not pseudo-convergent of transcendental type, then $\Gamma_{v_E}=\langle \Gamma_v,\Delta_E\rangle$. Furthermore, $\Delta_E$ does not depend on $p$ and, if $E$ is a pseudo-stationary sequence, $\Delta_E=\delta_E$.
\item\label{prop:description:monomiali} If $E$ has a pseudo-limit $\beta\in K$, then $v_E=v_{\beta,\Delta_{E}}$.
\end{enumerate}
\end{Prop}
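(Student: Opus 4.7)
The plan is to treat the five claims using Proposition \ref{values rat func on psm seq}, which yields $v(\phi(s_\nu))=\degdom_E(\phi)\delta_\nu+\gamma_\phi$ for all sufficiently large $\nu$, together with Lemma \ref{minimal degree in PE}. Parts (b) and (c) concerning the value group are essentially immediate: in (b) the transcendental type gives $\degdom_E(q)=0$ for every $q\in K[X]$, hence $v_E(q)\in\Gamma_v$ and $\Gamma_{v_E}=\Gamma_v$; in (c) the constant gauge $\delta$ yields $v_E(\phi)=\degdom_E(\phi)\delta+\gamma_\phi\in\Gamma_v$, so $\Gamma_{v_E}=\Gamma_v$ and $\Delta_E=\delta=\delta_E$.

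For the residue field in (b), I would invoke Kaplansky's construction of a transcendental pseudo-limit $z$ for $E$ inside an immediate extension $(K(z),\tilde v)$ of $(K,v)$; the $K$-isomorphism $K(X)\to K(z)$, $X\mapsto z$, identifies $V_E$ with the valuation ring of $\tilde v$, so $V\subset V_E$ is immediate. For the residue field in (c), Lemma \ref{pseudolimiti} ensures that $E\subset\inslim_E$, so picking $\alpha\in E$ and applying part (e) identifies $v_E$ with the monomial valuation $v_{\alpha,\delta}$; since $\delta\in\Gamma_v$, this monomial valuation has residue field $V/M(\bar t)$, where $\bar t$ is the residue of $t=(X-\alpha)/c$ and $v(c)=\delta$.

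Part (a) is the main case. Take $p\in\mathcal{P}_E$ of minimal degree and set $\Delta_E=v_E(p)$; by Lemma \ref{minimal degree in PE}, $\Delta_E$ is independent of $p$, lies outside $\Gamma_v$ and is non-torsion modulo $\Gamma_v$, so $\Gamma_v+\Delta_E\Z$ is a direct sum. I would prove $\Gamma_{v_E}\subseteq\Gamma_v\oplus\Delta_E\Z$ by strong induction on $\deg q$ for $q\in K[X]$: if $\deg q<\deg p$, the minimality of $\deg p$ forces $v_E(q)\in\Gamma_v$ by Lemma \ref{minimal degree in PE}(a); if $\deg q\geq\deg p$, Euclidean division $q=ap+r$ with $\deg r<\deg p$ gives $v_E(r)\in\Gamma_v$ and $v_E(ap)=v_E(a)+\Delta_E\in\Gamma_v\oplus\Delta_E\Z$ by induction, and Proposition \ref{values rat func on psm seq} forces $v_E(ap)\neq v_E(r)$ because their $\delta_\nu$-coefficients $\degdom_E(ap)\geq\degdom_E(p)>0$ and $\degdom_E(r)=0$ differ, so $v_E(q)=\min(v_E(ap),v_E(r))\in\Gamma_v\oplus\Delta_E\Z$. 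For the residue-field identity $V_E/M_E\cong V/M$: in the pseudo-divergent case $E\subset\inslim_E$ so (e) applies directly and the monomial valuation $v_{\alpha,\Delta_E}$ with $\Delta_E$ non-torsion modulo $\Gamma_v$ has residue field $V/M$; in the pseudo-convergent algebraic case, if a pseudo-limit $\beta\in K$ is available the same argument works, and if not, I would extend $v$ to $\overline K$, apply (e) over $\overline K$ to a pseudo-limit $\overline\beta\in\overline K$, and descend the residue-field equality to $K(X)$ via $\Gal(\overline K/K)$. Part (d) is then a consolidation: independence of $\Delta_E$ from $p$ is Lemma \ref{minimal degree in PE}(c), and $\Delta_E=\delta_E$ in the pseudo-stationary case follows from the minimal element $p=X-\alpha$ of $\mathcal{P}_E$.

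For (e), assuming $\beta\in K$ is a pseudo-limit of $E$, I would expand $f(X)=\sum_i a_i(X-\beta)^i$ and use $v(s_\nu-\beta)=\delta_\nu$ to obtain $v(f(s_\nu))\geq\min_i\{v(a_i)+i\delta_\nu\}$, with equality whenever the minimum is uniquely attained. Uniqueness holds for large $\nu$: in the algebraic pseudo-convergent and pseudo-divergent cases it follows from $\Delta_E$ being non-torsion over $\Gamma_v$, so the expressions $v(a_i)+i\Delta_E$ are pairwise distinct; in the pseudo-stationary case it follows from the distinctness of the residues $\overline{(s_\nu-\beta)/c}$, which leaves only finitely many indices at which a normalized polynomial in the residues can vanish. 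Passing to $v_E$ and extending multiplicatively yields $v_E=v_{\beta,\Delta_E}$ on $K(X)$. The main obstacle I anticipate is the Galois descent required in (a) when no pseudo-limit lies in $K$, since it needs careful bookkeeping of conjugates and extensions of $v$ to $\overline K$.
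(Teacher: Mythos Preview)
Your approach is largely the same as the paper's. The value-group computation in (a) via iterated Euclidean division is exactly the $p$-adic expansion $q=r_0+r_1p+\cdots+r_np^n$ that the paper uses, and your treatments of (b), (c), (d) and (e) are correct and differ from the paper only cosmetically (you derive the residue field in (c) from (e), while the paper verifies the monomial-valuation criterion of Ostrowski directly; both are fine).

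The one genuine gap is precisely the point you flag: the residue-field equality $V_E/M_E\cong V/M$ in (a) when $E$ is pseudo-convergent of algebraic type with $\inslim_E=\emptyset$. Your ``Galois descent via $\Gal(\overline K/K)$'' is vague, and it is not clear how to make it work: the different extensions $\sigma(U)$ of $V$ to $\overline K$ give different embeddings of $V_E/M_E$ into (a priori different) residue fields, and there is no obvious Galois-fixed-point argument forcing the image to lie in $V/M$. The paper avoids this entirely with a two-line direct computation. Having already shown $U_E/M_{U_E}=U/M_U$ over $\overline K$, take a unit $\phi\in V_E$; there is a unit $\beta\in U$ with $\phi-\beta\in M_{U_E}$, so $\phi(s_\nu)\in\beta+M_U$ for all $\nu\geq N$. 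Since each $\phi(s_\nu)\in K$ is a unit of $V$, pick $a\in K$ with $\phi(s_N)\in a+M$; then for every $\nu>N$ we have $\phi(s_\nu)-\phi(s_N)\in M_U\cap K=M$, hence $\phi(s_\nu)\in a+M$. Thus $\phi-a\in M_E$, and the residue of $\phi$ lies in $V/M$. Replace your descent sketch with this argument and the proof is complete.
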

\begin{proof}
\ref{prop:description:alg} In both cases, $E$ has a pseudo-limit in $\overline{K}$ with respect to some extension of $v$, and so $\mathcal{P}_E\neq\emptyset$.
Fix a polynomial $p\in\mathcal{P}_E$ of minimal degree, and let $\Delta_E=v_E(p)$, which does not depend on $p$ and is not torsion over $\Gamma_v$  by Lemma \ref{minimal degree in PE}. For every $q\in K[X]$, we can write $q=r_0+r_1p+r_2p^2+\cdots+r_np^n$, for some (uniquely determined) $r_0,\ldots,r_n\in K[X]$ such that $\deg r_i<\deg p$. Since $\Delta_E$ is not torsion over $\Gamma_v$ and $v_E(r_i)\in\Gamma_v$ for each $i$ by minimality of the degree of $p$, we have
\begin{equation*}
v_E(r_ip^i)=v_E(r_i)+i\Delta_E\neq v_E(r_j)+j\Delta_E=v_E(r_jp^j)
\end{equation*}
for every $i\neq j$; therefore, $v_E(q)=\min\{v_E(r_0),v_E(r_1p),\ldots,v_E(r_np^n)\}$, and in particular $v_E(q)\in\Gamma_v\oplus\Delta_E\insN$. Hence, $\Gamma_{v_E}=\Gamma_v\oplus\Delta_E\insZ$.

We now show that $V_E/M_E=V/M$. If $E$ has a pseudo-limit $\alpha$ in $K$, then as in  \cite[Proposition 3.11]{PS} by Lemma \ref{minimal degree in PE} we have $v_E=v_{\alpha,\Delta_E}$ and by \cite[Chap. VI, \S 10, 1., Proposition 1]{bourbaki-inglese} $V_E$ and $V$ have the same residue field. Suppose instead $\inslim_E=\emptyset$ (in particular, $E$ must be a pseudo-convergent sequence, by Lemma \ref{pseudolimiti}), and let $\phi$ be a unit of $V_E$. Let $u$ be an extension of $v$ to $\overline{K}$ and let $\alpha\in\inslim_E^u$. Then, the residue field of $U_E$ is equal to the residue field of $U$ (by the previous case); hence, there is a unit $\beta$ of $U$ such that $\phi-\beta\in M_{U_E}$. Thus, $\phi(s_\nu)\in\beta+M_U$ for all $\nu$ bigger or equal than some $N\in\Lambda$.

Since $\phi$ is a unit of $V_E$, $\phi(s_\nu)$ is a unit of $V$ for all large $\nu$; without loss of generality, for $\nu\geq N$. Let $a$ be such that $\phi(s_N)\in a+M$: then, for every $\nu>N$ we have $\phi(s_\nu)-\phi(s_N)\in M_U\cap V=M$, and thus also $\phi(s_\nu)\in a+M$. Hence, the image of $\phi$ is in $V/M$, and so $V/M=V_E/M_E$. The claim is proved.

\ref{prop:description:trasc} This follows from Kaplansky's results in \cite{kaplansky-maxfield}.

\ref{prop:description:pstaz} Suppose that $E$ is a pseudo-stationary sequence. It is clear that, without loss of generality, we may suppose that $K$ is algebraically closed. In order to prove the claim, by \cite[\S 11, IV, p. 366]{Ostrowski} it is sufficient to show that $v_E(X-\alpha-\beta)=\min\{v_E(X-\alpha),v(\beta)\}$ for each $\beta\in K$. By Proposition \ref{values rat func on psm seq}, we have $v_E(X-\alpha)=\delta$. If $\delta\not=v(\beta)$ we are done. If $\delta=v(\beta)$, then by Lemma \ref{pseudolimiti}, $\alpha+\beta$ is a pseudo-limit of $E$, so again by Proposition \ref{values rat func on psm seq} we have $v_E(X-\alpha-\beta)=\delta$.

\ref{prop:description:mindeg} For pseudo-convergent sequences of algebraic type or pseudo-divergent sequences the claim follows from the proof of part \ref{prop:description:alg}. For a  pseudo-stationary sequence $E$, $\Delta_E=v_E(X-\alpha)=\delta_E$ for all pseudo-limits $\alpha\in\inslim_E$, and we are done. \ref{prop:description:monomiali} follows in the same way.
\end{proof}

The next proposition constitutes an important generalization of \cite[Lemma 5]{kaplansky-maxfield} and \cite[III, \S64, p. 371]{Ostrowski}, which says that the image under a polynomial of a pseudo-convergent sequence is an eventually pseudo-convergent sequence.

\begin{Prop}\label{image pm under a rational function}
Let $E=\{s_\nu\}_{\nu\in\Lambda}\subset K$ be a strictly pseudo-monotone sequence and let $\phi\in K(X)$ be non-constant. Then $\phi(E)=\{\phi(s_\nu)\}_{\nu\in\Lambda}$ is an eventually strictly pseudo-monotone sequence, which is of the same kind of $E$ if $\degdom_E(\phi)>0$, and not of the same kind if $\degdom_E(\phi)<0$; if $\degdom_E(\phi)\not=0$, then $\mathcal{L}_{\phi(E)}=\Br(\phi(E))$. Furthermore, if $\phi(E)$ is eventually pseudo-convergent, then $\phi(X)$ is a pseudo-limit of $\phi(E)$ with respect to $v_E$.
\end{Prop}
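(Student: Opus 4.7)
The proof hinges on Proposition \ref{values rat func on psm seq}, which gives $v(\phi(s_\nu))=\lambda\delta_\nu+\gamma$ for all sufficiently large $\nu$, with $\lambda=\degdom_E(\phi)\in\Z$ and $\gamma\in\Gamma_v$. The strategy is to deduce $v(\phi(s_\rho)-\phi(s_\nu))$ for $\nu<\rho$ large directly from this formula and read off the monotonicity type of $\phi(E)$, splitting the analysis according to the sign of $\lambda$.

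Suppose first $\lambda\neq 0$. Since the gauge $\{\delta_\nu\}$ is strictly monotone, so is $\{v(\phi(s_\nu))\}=\{\lambda\delta_\nu+\gamma\}$; in particular $v(\phi(s_\rho))\neq v(\phi(s_\nu))$ whenever $\nu<\rho$ is large, so the ultrametric inequality yields $v(\phi(s_\rho)-\phi(s_\nu))=\min\{v(\phi(s_\nu)),v(\phi(s_\rho))\}$. Inspecting the four combinations of signs of $\lambda$ and kinds of $E$ shows that $\phi(E)$ is pseudo-convergent precisely when either ($E$ pseudo-convergent and $\lambda>0$) or ($E$ pseudo-divergent and $\lambda<0$), and pseudo-divergent in the two complementary cases; in each case this matches the ``same kind/opposite kind'' dichotomy of the statement. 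The same computation moreover shows that $v(0-\phi(s_\nu))=v(\phi(s_\nu))$ agrees with the gauge of $\phi(E)$ at $\nu$, so $0$ is a pseudo-limit of $\phi(E)$, and by Lemma \ref{pseudolimiti} we obtain $\mathcal L_{\phi(E)}=\Br(\phi(E))$.

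The case $\lambda=0$ is handled by the splitting indicated in the commented-out text. Reducing to $K=\overline K$, I would write $\phi=p/q$ and factor $q=q_1q_2$ so that the roots of $q_1$ are exactly those roots of $q$ that are pseudo-limits of $E$; dividing $p$ by $q_1$ and regrouping yields $\phi=a/q_2+b/(q_1q_2)=:\phi_1+\phi_2$ with $\deg b<\deg q_1$. A degree count gives $\degdom_E(\phi_2)<0$, so by the previous case $\{\phi_2(s_\nu)\}$ is definitively strictly pseudo-monotone of the opposite kind to $E$. For $\phi_1=a/q_2$, choose a pseudo-limit $\alpha$ of $E$: in $K$ when $E$ is pseudo-divergent, in $\overline K$ when $E$ is pseudo-convergent of algebraic type, or in a valued extension $K(z)$ obtained via Kaplansky's transcendental construction when $E$ is pseudo-convergent of transcendental type. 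In all cases $\phi_1(X)-\phi_1(\alpha)$ has $\alpha$ as a zero and no poles among pseudo-limits, so $\degdom_E(\phi_1-\phi_1(\alpha))>0$; the previous case gives $\{\phi_1(s_\nu)-\phi_1(\alpha)\}$ of the same kind as $E$, and subtracting the constant $\phi_1(\alpha)$ does not alter the kind. Since the gauges of $\phi_1(E)$ and $\phi_2(E)$ move in opposite directions, their consecutive differences $v(\phi_i(s_{\nu+1})-\phi_i(s_\nu))$ (one strictly increasing, one strictly decreasing) can coincide at most once, after which one strictly dominates; hence $v(\phi(s_{\nu+1})-\phi(s_\nu))$ definitively equals the smaller of the two, making $\phi(E)$ definitively strictly pseudo-monotone.

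Finally, assume $\phi(E)$ is definitively pseudo-convergent, with gauge $\delta_\nu^{\phi(E)}$ at index $\nu$. Applying Proposition \ref{values rat func on psm seq} to the rational function $\phi(X)-\phi(s_\nu)\in K(X)$ at fixed $\nu$, we have $v((\phi(X)-\phi(s_\nu))(s_\mu))=v(\phi(s_\mu)-\phi(s_\nu))=\delta_\nu^{\phi(E)}$ for all sufficiently large $\mu>\nu$ by the first part of the proof; in particular this value is eventually constant in $\mu$, forcing $\degdom_E(\phi-\phi(s_\nu))=0$ and $v_E(\phi(X)-\phi(s_\nu))=\delta_\nu^{\phi(E)}\in\Gamma_v$. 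Since $\{\delta_\nu^{\phi(E)}\}$ is strictly increasing in $\nu$, this is exactly the pseudo-limit condition for $\phi(X)$ with respect to $v_E$. The main obstacle I anticipate is the $\lambda=0$ case: producing the clean decomposition $\phi=\phi_1+\phi_2$, invoking the transcendental-type construction when no pseudo-limit exists in $\overline K$, and verifying carefully that the sum of a pseudo-convergent and a pseudo-divergent sequence is definitively strictly pseudo-monotone.
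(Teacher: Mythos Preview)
Your proposal is correct. For the cases $\lambda\neq 0$ and $\lambda=0$ you follow essentially the paper's own argument: the ultrametric comparison of $v(\phi(s_\nu))$ with $v(\phi(s_\rho))$ when $\lambda\neq 0$, and the decomposition $\phi=\phi_1+\phi_2$ (via division of $p$ by $q_1$) together with the Kaplansky transcendental extension when $\lambda=0$, are exactly what the paper does.

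Where you genuinely diverge is in the final pseudo-limit claim. The paper treats it casewise: for $\lambda\neq 0$ it argues that $v_E(\phi(X)/\phi(s_\nu))>0$ and hence $v_E(\phi(X)-\phi(s_\nu))=v(\phi(s_\nu))$, while for $\lambda=0$ it goes back through the decomposition $\phi=\phi_1+\phi_2$, bounds $v_E(\phi_2(X)-\phi_2(s_\nu))$ from below, and reduces to the $\phi_1$ piece. Your argument is uniform and shorter: once $\phi(E)$ is known to be definitively pseudo-convergent, the gauge identity $v(\phi(s_\mu)-\phi(s_\nu))=\delta_\nu^{\phi(E)}$ for all large $\mu>\nu$ forces, via Proposition~\ref{values rat func on psm seq} applied to $\phi(X)-\phi(s_\nu)$, that $\degdom_E(\phi-\phi(s_\nu))=0$ and hence $v_E(\phi(X)-\phi(s_\nu))=\delta_\nu^{\phi(E)}$. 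This sidesteps the decomposition entirely for the pseudo-limit part and works regardless of the sign of $\lambda$ or the kind of $E$. The paper's route has the minor advantage of being self-contained within each case, but your approach is cleaner and exploits Proposition~\ref{values rat func on psm seq} more directly.
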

\begin{proof}
Let $\lambda=\degdom_E(\phi)$. Suppose first that $\lambda>0$ and $E$ is a pseudo-convergent sequence. By Proposition \ref{values rat func on psm seq} we have $v(\phi(s_\nu))=\lambda\delta_\nu+\gamma<v(\phi(s_\mu))=\lambda\delta_\mu+\gamma$ for all $\nu<\mu$ sufficiently large (say greater than some $\nu_0\in\Lambda$), which shows that $\phi(E)$ is an eventually pseudo-convergent sequence with gauge $\{\lambda\delta_\nu+\gamma\}_{\nu\in\Lambda}$. Since $v(\phi(s_\nu))$ increases, $0$ is a pseudo-limit of $\phi(E)$, and thus by Lemma \ref{pseudolimiti} we have the equality $\mathcal{L}_{\phi(E)}=\Br(\phi(E))$. 
Since $v(\phi(s_\rho))>v(\phi(s_\nu))$ if $\rho>\nu$ (sufficiently large), we have $v_E\left(\frac{\phi(X)}{\phi(s_\nu)}\right)> 0$ for all $\nu$ sufficiently large; hence, eventually, $v_E(\phi(X)-\phi(s_\nu))=v_E(\phi(s_\nu))$, and in particular $\{v_E(\phi(X)-\phi(s_\nu))\}_{\nu\in\Lambda}$ is strictly increasing. Hence, $\phi(X)$ is a pseudo-limit of $\phi(E)$.

If $\lambda>0$ and $E$ is a pseudo-divergent sequence, then as above $\phi(E)$ is eventually pseudo-divergent. If $\lambda<0$, then in the same way we can prove that $\phi(E)$ is strictly pseudo-monotone, not of the same kind of $E$, and $\phi(X)$ is a pseudo-limit of $\phi(E)$ with respect to $v_E$.

\medskip

Suppose now that $\lambda=0$ and  $E$ is a  pseudo-convergent sequence. Without loss of generality, we may also suppose  that $K=\overline K$. Let $\phi(X)=p(X)/q(X)$, where $p,q\in K[X]$. Since $K$ is algebraically closed, we can write $q(X)=q_1(X)q_2(X)$ in such a way that all zeros of $q_1$ are pseudo-limits of $E$ while no zero of $q_2$ is a pseudo-limit of $E$ (if $E$ has no pseudo-limits, then $q(X)=q_2(X)$ and $q_1(X)=1$). In particular, $\deg q_1=\degdom_E(q_1)$. Dividing $p$ by $q_1$, we have
\begin{equation*}
\phi(X)=\frac{p(X)}{q(X)}=\frac{a(X)q_1(X)+b(X)}{q(X)}=\frac{a(X)}{q_2(X)}+\frac{b(X)}{q(X)},
\end{equation*}
where $a,b\in K[X]$ and $\deg b<\deg q_1$. The rational function $\phi_2(X)=\frac{b(X)}{q(X)}$ has dominating degree
\begin{equation*}
\degdom_E(\phi_2)=\degdom_E(b)-\degdom_E(q_1)\leq\deg b-\deg q_1<0,
\end{equation*}
and thus, by the previous part of the proof, $\{\phi_2(s_\nu)\}_{\nu\in\Lambda}$ is an eventually pseudo-divergent sequence.

Consider now $\phi_1(X)=\frac{a(X)}{q_2(X)}$. If $E$ has a pseudo-limit in $K=\overline{K}$, let $\alpha\in\inslim_E$. If not, then $E$ is a pseudo-convergent sequence of transcendental type, and we can extend $v$ to a transcendental extension $K(z)$ of $K$ such that $z$ is a pseudo-limit of $E$ (\cite[Theorem 2]{kaplansky-maxfield}), and we set $\alpha=z$; with a slight abuse of notation, we still denote by $v$ this extension to $K(z)$. Note that in any case  $q_2(\alpha)\neq 0$ since $\degdom_E(q_2)=0$. Consider the following  rational function over $K(\alpha)$:
\begin{equation*}
\psi(X)=\phi_1(X)-\phi_1(\alpha)=\frac{a(X)q_2(\alpha)-a(\alpha)q_2(X)}{q_2(\alpha)q_2(X)}.
\end{equation*}
Since $\psi(\alpha)=0$, the dominating degree of the numerator of $\psi$ is positive; on the other hand, $\degdom_E(q_2(\alpha)q_2)=\degdom_E(q_2)=0$. Hence, $\degdom_E(\psi)>0$, and by the previous part of the proof $\{\psi(s_\nu)\}_{\nu\in\Lambda}$ is an eventually pseudo-convergent sequence in $K(\alpha)$. Thus, also $\{\phi_1(s_\nu)\}_{\nu\in\Lambda}=\{\psi(s_\nu)+\phi_1(\alpha)\}_{\nu\in\Lambda}$ is eventually pseudo-convergent in $K(\alpha)$; however, $\phi_1(s_\nu)\in K$ for every $\nu$, and thus $\{\phi_1(s_\nu)\}_{\nu\in\Lambda}$ is a  eventually pseudo-convergent sequence in $K$.

By definition, $\phi(s_\nu)=\phi_1(s_\nu)+\phi_2(s_\nu)$ and, by the previous points, the sequences $\{\phi_1(s_\nu)\}_{\nu\in\Lambda}$ and $\{\phi_2(s_\nu)\}_{\nu\in\Lambda}$ are eventually pseudo-convergent and eventually pseudo-divergent, respectively. In particular, for large $\nu$, $v(\phi_1(s_{\rho})-\phi_1(s_\nu))$, $\rho>\nu$, is increasing and $v(\phi_2(s_{\rho})-\phi_2(s_\nu))$, $\rho>\nu$, is decreasing; it follows that $v(\phi(s_{\rho})-\phi(s_\nu))$, $\rho>\nu$, is eventually equal to one of the two. Hence, $\phi(s_\nu)$ is eventually strictly pseudo-monotone, as claimed.

Suppose in particular that $\phi(E)$ is eventually  pseudo-convergent: then, 
\begin{equation*}
v_E(\phi(X)-\phi(s_\nu))=v_E((\phi_1(X)-\phi_1(s_\nu))+(\phi_2(X)-\phi_2(s_\nu))).
\end{equation*}
By the case $\lambda>0$, we have $v_E((\phi_1(X)-\phi_1(s_\nu))=v_E(\phi_1(s_\nu))$ for all large $\nu$. On the other hand, since $\phi(E)$ is pseudo-convergent we have $v_E(\phi_1(s_\nu))<v_E(\phi_2(s_\rho))$ for all large $\nu<\rho$; in particular, we also have $v_E(\phi_2(X))\geq v_E(\phi_1(X))$ and so $v_E(\phi_2(X)-\phi_2(s_\nu))$ is bigger than both $v_E(\phi_1(X))$ and $v_E(\phi_1(s_n))$. Hence, 
\begin{equation*}
v_E(\phi(X)-\phi(s_\nu))=v_E(\phi_1(X)-\phi_1(s_\nu))=v_E(\phi_1(s_\nu)),
\end{equation*}
which is eventually strictly increasing. Hence, $\phi(X)$ is a pseudo-limit of $\phi(E)$ with respect to $v_E$, as claimed.

If $E$ is pseudo-divergent, the same reasoning applies (with the only difference that $\phi_1(E)$ will be pseudo-divergent and $\phi_2(E)$ pseudo-convergent).
\end{proof}

\section{Extensions}\label{sect:fundamentalsatz}
We now start the proof of our generalization of Ostrowski's Fundamentalsatz (Theorem \ref{teor:fundamentalsatz}): we want to show that, under some hypothesis, we can obtain every extension $W$ of $V$ to $K(X)$ as a valuation domain $V_E$ associated to a pseudo-monotone sequence $E$ contained in $K$. In order to accomplish this objective, we want to associate to each such extension $W$ a subset of $K$ which is the analogue of the set of pseudo-limits of a pseudo-monotone sequence.

\begin{Def}
Let $W$ be an extension of $V$ to $K(X)$. We define the following subsets of $K$:
\begin{align*}
\inslim_1(W)= & \{\alpha\in K\mid w(X-\alpha)\notin\Gamma_v\};\\
\inslim_2(W)= & \{\alpha\in K\mid w(X-\alpha)\in\Gamma_v,\text{~and~}w(X-\alpha+c)=w(X-\alpha)\text{~if~}w(X-\alpha)=v(c)\};\\
\inslim(W)= & \inslim_1(W)\cup\inslim_2(W).
\end{align*}
\end{Def}
Equivalently, $\alpha\in\inslim_2(W)$ if $w(X-\alpha)=v(c)$ for some $c\in K$, and the image of $\frac{X-\alpha}{c}$ in the residue field of $W$ does not belong to the residue field of $V$.

\begin{prop}\label{prop:pseudolim}
Let $W$ be an extension of $V$ to $K(X)$.
\begin{enumerate}[(a)]
\item\label{prop:pseudolim:imm} Suppose $K$ is algebraically closed. Then $V\subset W$ is immediate if and only if $\inslim(W)=\emptyset$.
\item\label{prop:pseudolim:w} If $\alpha\in\inslim(W)$, then $w(X-\alpha)\geq w(X-\beta)$ for each $\beta\in K$, and equality occurs if and only if $\beta\in \inslim(W)$.
\item\label{prop:pseudolim:uno} If $\inslim(W)\neq\emptyset$, then exactly one of  $\inslim_1(W)$ and $\inslim_2(W)$ is nonempty.
\item\label{prop:pseudolim:ball1} If $\inslim_1(W)\neq\emptyset$ is nonempty, then it is equal to $K$ or to $\alpha+I$ for some $\alpha\in K$ and some (fractional) ideal $I$.
\item\label{prop:pseudolim:ball2} If $\inslim_2(W)\neq\emptyset$ is nonempty, then it is equal to $\alpha+cV$ for some $\alpha,c\in K$ with $v(c)=w(X-\alpha)$.
\end{enumerate}
\end{prop}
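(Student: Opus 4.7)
My plan is to prove the parts in the order (b), (c), (d), (e), (a), since (b) is the key technical lemma and the others build on it.

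For part (b), suppose $\alpha, \beta \in \inslim(W)$ are distinct and, for contradiction, $w(X-\alpha) \neq w(X-\beta)$. Writing $X - \beta = (X-\alpha) + (\alpha - \beta)$ and using the ultrametric inequality, the three values $w(X-\alpha)$, $w(X-\beta)$, $v(\alpha - \beta)$ satisfy that the two smallest among them are equal. I would case-split: if the smallest is $w(X-\alpha)$, then $w(X-\alpha) = v(\alpha-\beta) \in \Gamma_v$, so $\alpha \in \inslim_2(W)$; setting $c = \alpha - \beta$, we have $v(c) = w(X-\alpha)$ but $X - \alpha - c = X - \beta$ has $w$-value strictly larger than $w(X-\alpha)$, contradicting $\alpha \in \inslim_2(W)$. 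The symmetric case gives the symmetric contradiction. Part (c) is then immediate from (b), since $w(X-\alpha)$ cannot lie both in $\Gamma_v$ and outside it.

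For (d), fix $\alpha \in \inslim_1(W)$ and let $\delta = w(X-\alpha) \notin \Gamma_v$. For $\beta \in K$, write $X - \beta = (X-\alpha) + (\alpha - \beta)$; since $v(\alpha-\beta) \in \Gamma_v$ and $\delta \notin \Gamma_v$, the two values are always comparable and unequal, so $w(X-\beta) = \min\{\delta, v(\alpha-\beta)\}$. Hence $\beta \in \inslim_1(W)$ iff $v(\alpha-\beta) > \delta$, i.e., $\inslim_1(W) = \alpha + I$ where $I = \{c \in K : v(c) > \delta\} \cup \{0\}$; this set is either all of $K$ (when $\delta$ lies below $\Gamma_v$) or a proper fractional ideal of $V$. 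For (e), fix $\alpha \in \inslim_2(W)$ and $c \in K$ with $v(c) = w(X-\alpha)$; by the equivalent formulation following the definition, the residue $\overline{\xi}$ of $\xi := (X-\alpha)/c$ modulo $M_W$ does not lie in $V/M$. For $\beta \in K$ with $v(\alpha-\beta) \geq v(c)$, write $(X-\beta)/c = \xi + (\alpha-\beta)/c$; since $(\alpha-\beta)/c \in V$, its residue is in $V/M$, and therefore the residue of $(X-\beta)/c$ is still outside $V/M$ (because the sum of something not in $V/M$ with something in $V/M$ is not in $V/M$). Hence $w(X-\beta) = v(c) = w(X-\alpha)$ and $\beta \in \inslim_2(W)$. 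Conversely, if $v(\alpha-\beta) < v(c)$ then $w(X-\beta) = v(\alpha-\beta) \neq w(X-\alpha)$, contradicting (b). Thus $\inslim_2(W) = \alpha + cV$.

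For (a), if $V \subset W$ is immediate then $\Gamma_w = \Gamma_v$ forces $\inslim_1(W) = \emptyset$, and given any $\alpha \in K$, choosing $c$ with $v(c) = w(X-\alpha)$, the residue of $(X-\alpha)/c$ lies in the residue field of $W$, which equals $V/M$; thus $\alpha \notin \inslim_2(W)$ by the equivalent characterization. For the converse, assume $K$ is algebraically closed and $\inslim(W) = \emptyset$. Every nonzero polynomial factors as $a \prod_i (X - \alpha_i)$ with $a \in K$ and $\alpha_i \in K$; since $\inslim_1(W) = \emptyset$, each $w(X - \alpha_i) \in \Gamma_v$, so $\Gamma_w = \Gamma_v$. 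For the residue field, for each $\alpha_i$ pick $c_i \in K$ with $v(c_i) = w(X-\alpha_i)$; since $\alpha_i \notin \inslim_2(W)$, the residue of $(X-\alpha_i)/c_i$ lies in $V/M$. Factoring any unit $f/g$ of $W$ this way shows its residue is a product of elements of $V/M$, hence in $V/M$; so the residue fields coincide and the extension is immediate. The main subtlety is keeping track of which of the three values $w(X-\alpha)$, $w(X-\beta)$, $v(\alpha-\beta)$ is strictly smaller than the other two in part (b), and correctly interpreting the equivalent definition of $\inslim_2$ in terms of residues in part (e) and the converse direction of (a).
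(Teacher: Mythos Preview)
Your argument is correct. There is a harmless sign slip in part (b): with $c=\alpha-\beta$ one has $X-\alpha+c=X-\beta$, not $X-\alpha-c$; since $v(-c)=v(c)$ this does not affect the contradiction. Parts (b)--(e) are essentially the paper's argument reorganized: the paper first writes down the single formula
\[
w(X-\beta)=\begin{cases} w(X-\alpha), & v(\alpha-\beta)>w(X-\alpha)\\ v(\alpha-\beta), & v(\alpha-\beta)<w(X-\alpha)\\ w(X-\alpha), & v(\alpha-\beta)=w(X-\alpha)\end{cases}
\]
(valid for $\alpha\in\inslim(W)$) and reads off (b)--(e) from it, whereas you prove (b) first by contradiction and then derive the rest; the content is the same.

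The genuine difference is in the converse of (a). The paper argues by contrapositive: assuming $V\subset W$ is not immediate, it treats the case $\Gamma_v=\Gamma_w$ by invoking an external result (Alexandru--Popescu, \emph{Sur une classe de prolongements \`a $K(X)$ d'une valuation sur un corps $K$}) to produce $\alpha,c\in K$ with the residue of $(X-\alpha)/c$ transcendental over $V/M$. Your direct approach is more self-contained: assuming $\inslim(W)=\emptyset$, you factor an arbitrary unit of $W$ over the algebraically closed field $K$ into linear pieces $(X-\alpha_i)/c_i$, each of which has residue in $V/M$ because $\alpha_i\notin\inslim_2(W)$, and conclude $W/M_W=V/M$. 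This avoids the external citation at the cost of a small bookkeeping computation (checking that the leftover constant $(a/b)\prod c_i/\prod d_j$ is a unit of $V$, which follows from $w(\phi)=0$).
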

Note that (b) above is a generalization of \cite[Proposition 3.11, (a)]{PS}.
\begin{proof}
\ref{prop:pseudolim:imm} Suppose $K$ is algebraically closed. If $V\subset W$ is immediate, then $\Gamma_w=\Gamma_v$ (so $\inslim_1(W)=\emptyset$); furthermore, since $W/M_W=V/M$, also $\inslim_2(W)=\emptyset$. Conversely, suppose that $V\subset W$ is not immediate. If  $\Gamma_v\neq\Gamma_w$, then $w(p)\notin\Gamma_v$ for some $p\in K[X]$, and thus $w(p')\notin\Gamma_v$ for some irreducible factor $p'$ of $p$; since $K$ is algebraically closed, $p'(X)=X-\alpha$ and $\alpha\in\inslim_1(W)$.
If $\Gamma_v=\Gamma_w$, then $V/M\subsetneq W/M_W$ and this extension must be transcendental (since $K$ is algebraically closed, so is $V/M$). By the proof of \cite[Proposition 2]{AP}, we can find $\alpha,c\in K$ such that $w(X-\alpha)=v(c)$ and the image of $\frac{X-\alpha}{c}$ is transcendental over $V/M$; it follows that $\alpha\in\inslim_2(W)$, which in particular is nonempty.

\ref{prop:pseudolim:w}-\ref{prop:pseudolim:ball2} If $\inslim(W)\neq\emptyset$ and $\inslim(W)\neq K$, let $\alpha\in\inslim(W)$. Then, if $\beta\in K$ we have:
\begin{equation}\label{eq:calcololim}
w(X-\beta)=w(X-\alpha+\alpha-\beta)=\begin{cases}
w(X-\alpha), & \text{if~}v(\alpha-\beta)\geq w(X-\alpha)\\
v(\alpha-\beta), & \text{if~}v(\alpha-\beta)<w(X-\alpha)
\end{cases}
\end{equation}
Suppose $\alpha\in\inslim_1(W)$. Since $w(X-\beta)$ is equal either to $w(X-\alpha)\notin\Gamma_v$ or to  $v(\alpha-\beta)$, in the former case $\beta\in\inslim_1(W)$, while in the latter $\beta\notin\inslim_1(W)$ and $w(X-\beta)<w(X-\alpha)$. Moreover, $\inslim_1(W)=\alpha+\{x\in K\mid v(x)>w(X-\alpha)\}$, and the latter set is an ideal.

If $\alpha\in\inslim_2(W)$ and $v(\alpha-\beta)\geq w(X-\alpha)$, then $w(X-\beta)=w(X-\alpha)=v(c)\in\Gamma_v$, for some $c\in K$, so $\beta\in\inslim_2(W)$ because $(X-\beta)/c=(X-\alpha)/c+(\beta-\alpha)/c$: over the residue field of $W$ $(X-\alpha)/c$ is not in $V/M$ so it follows that the same holds for $(X-\beta)/c$. Similarly, if $\beta\in\inslim_2(W)$ it can be proved that $v(\alpha-\beta)\geq w(X-\alpha)$ and so $w(X-\beta)=w(X-\alpha)$. If $v(\alpha-\beta)<w(X-\alpha)$,  then as before $w(X-\beta)<w(X-\alpha)$.
In particular, $\inslim_2(W)=\alpha+\{x\in K\mid v(x)\geq w(X-\alpha)\}=\alpha+cV$, and $\inslim_1(W)=\emptyset$ (because $\alpha\in\inslim_2(W)$ and \eqref{eq:calcololim}). Note that this argument shows that at most one of the sets $\inslim_i(W)$, $i=1,2$, can be non-empty.

In all cases, $w(X-\alpha)\geq w(X-\beta)$ for all $\alpha\in\inslim(W)$ and $\beta\in K$, and equality occurs if and only if $\beta\in\inslim(W)$.
\end{proof}

\begin{prop}\label{prop:limVE}
Let $E\subset K$ be a pseudo-monotone sequence.
\begin{enumerate}[(a)]
\item\label{prop:limVE:L1} If $E$ is a strictly pseudo-monotone sequence, then $\inslim_1(V_E)=\inslim_E$.
\item\label{prop:limVE:L2} If $E$ is pseudo-stationary, then $\inslim_2(V_E)=\inslim_E$.
\end{enumerate}
In both cases, $\inslim(V_E)$ is the set of pseudo-limits of $E$ in $K$.
\end{prop}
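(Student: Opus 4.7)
The proof splits by the type of $E$, using the structural information from Proposition \ref{prop:description} to reduce each case to a direct computation.

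For \ref{prop:limVE:L1}, when $E$ is strictly pseudo-monotone, Proposition \ref{prop:description}\ref{prop:description:alg}--\ref{prop:description:trasc} gives $V_E/M_E = V/M$, so automatically $\inslim_2(V_E)=\emptyset$; one is thus reduced to showing $\inslim_1(V_E)=\inslim_E$. Fix $\alpha\in K$: the only monic irreducible factor of $X-\alpha$ is itself, and $X-\alpha\in\mathcal{P}_E$ if and only if $\alpha\in\inslim_E$. If $\alpha\in\inslim_E$, then $X-\alpha$ is of minimal degree $1$ in $\mathcal{P}_E$, so Proposition \ref{prop:description}\ref{prop:description:mindeg} together with Lemma \ref{minimal degree in PE}\ref{minimal degree in PE:inGammav} give $v_E(X-\alpha)=\Delta_E\notin\Gamma_v$, hence $\alpha\in\inslim_1(V_E)$. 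Conversely, if $\alpha\notin\inslim_E$, then either $\mathcal{P}_E=\emptyset$ (and $\Gamma_{v_E}=\Gamma_v$ by Proposition \ref{prop:description}\ref{prop:description:trasc}) or Lemma \ref{minimal degree in PE}\ref{minimal degree in PE:inGammav} applied to $X-\alpha\notin\mathcal{P}_E$ gives $v_E(X-\alpha)\in\Gamma_v$; either way $\alpha\notin\inslim_1(V_E)$.

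For \ref{prop:limVE:L2}, when $E$ is pseudo-stationary, Proposition \ref{prop:description}\ref{prop:description:pstaz} gives $\Gamma_{v_E}=\Gamma_v$, so $\inslim_1(V_E)=\emptyset$ and it remains to prove $\inslim_2(V_E)=\inslim_E$. Fix $\alpha_0\in\inslim_E$ and $c_0\in K$ with $v(c_0)=\delta$; by the same Proposition, the residue $t$ of $(X-\alpha_0)/c_0$ modulo $M_E$ is transcendental over $V/M$. If $\alpha\in\inslim_E$, Lemma \ref{pseudolimiti} yields $y\in V$ with $\alpha-\alpha_0=c_0 y$, so that $(X-\alpha)/c_0=(X-\alpha_0)/c_0-y$ has residue $t-\overline{y}\notin V/M$; hence $\alpha\in\inslim_2(V_E)$. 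Conversely, suppose $\alpha\notin\inslim_E$. A short argument using pseudo-stationarity (together with Proposition \ref{values rat func on psm seq}) shows that $v(s_\nu-\alpha)=\gamma$ eventually for some $\gamma<\delta$, whence $v_E(X-\alpha)=\gamma$; choosing $c\in K$ with $v(c)=\gamma$, the decomposition
\[
\frac{X-\alpha}{c} \;=\; \frac{c_0}{c}\cdot\frac{X-\alpha_0}{c_0} \;+\; \frac{\alpha_0-\alpha}{c}
\]
puts the first summand in $M_E$ (since $(X-\alpha_0)/c_0$ is a unit of $V_E$ while $v(c_0/c)=\delta-\gamma>0$) and the second in $V\setminus M$ (since $v(\alpha_0-\alpha)=\gamma$ by the ultrametric inequality). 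Therefore the residue of $(X-\alpha)/c$ lies in $V/M$, giving $\alpha\notin\inslim_2(V_E)$.

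The final assertion then follows at once: in case \ref{prop:limVE:L1}, $\inslim(V_E)=\inslim_1(V_E)\cup\inslim_2(V_E)=\inslim_E\cup\emptyset=\inslim_E$, and symmetrically in case \ref{prop:limVE:L2}. The only step requiring real care is the residue computation in the second half of part \ref{prop:limVE:L2}; the rest is essentially a translation of the definitions through Proposition \ref{prop:description} and Lemma \ref{minimal degree in PE}.
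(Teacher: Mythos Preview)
Your proof is correct. The route differs from the paper's in both parts, though the differences are modest.

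For part \ref{prop:limVE:L1}, the paper argues directly from the definition of $V_E$: if $v_E(X-\alpha)=v(c)$ for some $c\in K$, then $(X-\alpha)/c$ is a unit of $V_E$, so $v(s_\nu-\alpha)=v(c)$ is eventually constant; for a strictly pseudo-monotone sequence this happens exactly when $\alpha\notin\inslim_E$. This avoids the case split between transcendental and algebraic type that your appeal to Lemma~\ref{minimal degree in PE} forces. Your argument is also fine, and has the virtue of making the role of $\mathcal{P}_E$ explicit.

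For part \ref{prop:limVE:L2}, the paper is shorter: it notes $v_E=v_{\alpha,\delta_E}$ by Proposition~\ref{prop:description}\ref{prop:description:monomiali}, observes (implicitly via Proposition~\ref{prop:description}\ref{prop:description:pstaz}) that the residue of $(X-\alpha)/c$ is transcendental over $V/M$, so $\alpha\in\inslim_2(V_E)$, and then invokes Proposition~\ref{prop:pseudolim}\ref{prop:pseudolim:ball2} to conclude $\inslim_2(V_E)=\alpha+cV=\inslim_E$. Your explicit residue computation for the case $\alpha\notin\inslim_E$ reproves by hand what Proposition~\ref{prop:pseudolim}\ref{prop:pseudolim:ball2} already packages; it is longer but self-contained.
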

\begin{proof}
Suppose first that $E=\{s_\nu\}_{\nu\in\Lambda}$ is a strictly pseudo-monotone sequence. Let $\alpha\in K$, and suppose $w(X-\alpha)=w(c)$ for some $c\in K$. Then, $(X-\alpha)/c$ is a unit of $W$, and in particular for large $\nu$ both $(s_\nu-\alpha)/c$ and $c/(s_\nu-\alpha)$ belong to $V$. Therefore, $v(s_\nu-\alpha)=v(c)$ for large $\nu$; hence, $w(X-\alpha)\in\Gamma_v$ if and only if $\alpha\notin\inslim_E$. Thus, $\inslim_1(V_E)=\inslim_E$; furthermore, by Proposition \ref{prop:description}\ref{prop:description:alg}, $V_E/M_E=V/M$, and so $\inslim_2(V_E)=\emptyset$. Hence, $\inslim(V_E)=\inslim_E$.

Suppose now that $E$ is pseudo-stationary: then, by Proposition \ref{prop:description}\ref{prop:description:monomiali}, $v_E=v_{\alpha,\delta_E}$. By Proposition \ref{prop:pseudolim}\ref{prop:pseudolim:ball2}, $\mathcal{L}(V_E)=\mathcal{L}_2(V_E)=\alpha+cV$, where $c\in K$ has value $v_E(X-\alpha)=\delta_E$. By Lemma \ref{pseudolimiti} this is precisely $\mathcal{L}_E$.
\end{proof}

\begin{Ex}\label{Vinfty}
Proposition \ref{prop:limVE} allows to show that there are extensions of $V$ to $K(X)$ which cannot be realized as $V_E$, for any pseudo-convergent sequence $E\subset K$. For example, consider the following valuation domain of $K(X)$ introduced in \cite{PerTransc}:
\begin{equation*}
V_{\infty}=\{\phi\in K(X)\mid \phi(\infty)\in V\},
\end{equation*}
where $\phi(\infty)$ is defined as $\psi(0)$, where $\psi(X)=\phi(1/X)$. Then, $V_{\infty}$ is the image of $V_0=\{\phi\in K(X)\mid \phi(0)\in V\}$ under the $K$-automorphism $\Phi$ of $K(X)$ sending $X$ to $1/X$. The valuation domain $V_0$ is equal to $V_F$, where $F=\{t_\nu\}_{\nu\in\Lambda}$ is a Cauchy sequence with limit $0$. Consider $E=\{s_\nu=t_\nu^{-1}\}_{\nu\in\Lambda}$: by Proposition \ref{image pm under a rational function}, $E$ is pseudo-divergent with $\Br(E)=K$ (since, as $v(t_\nu)$ is cofinal in $\Gamma_v$, $v(s_\nu)$ is coinitial). Thus, $V_{\infty}=V_E$ has $\inslim(V_{\infty})=\inslim_E=K$, which is different from $\inslim(V_G)=\inslim_G$ for every pseudo-convergent sequence $G$ (by Lemma \ref{lemma:carattbreadth}). In particular, $V_{\infty}\neq V_G$. Note also that $V_{\infty}$ is contained in the DVR  $K[1/X]_{(1/X)}$ (\cite[Proposition 2.2]{PerTransc}).
\end{Ex}

Proposition \ref{prop:pseudolim}\ref{prop:pseudolim:imm} is false without the assumption on $K$: in fact, if $E\subset K$ is a pseudo-convergent sequence of algebraic type without pseudo-limits in $K$, then, for some extension $u$ of $v$ to $K$, by Proposition \ref{prop:limVE} we have $\mathcal L(U_E)=\mathcal{L}^u_E\not=\emptyset$, so by contracting down to $K$ we have $\mathcal L(V_E)=\mathcal L_E=\emptyset$ while $V\subset V_E$ is not immediate by Proposition \ref{prop:description}.
\begin{prop}\label{prop:uglim}
Suppose $K$ is algebraically closed, and let $W_1,W_2$ be two extensions of $V$ to $K(X)$. If either $\inslim_1(W_1)=\inslim_1(W_2)\neq\emptyset$ or $\inslim_2(W_1)=\inslim_2(W_2)\neq\emptyset$, then $W_1=W_2$.
\end{prop}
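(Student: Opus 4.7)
The plan is to prove that $w_1(\phi)$ and $w_2(\phi)$ have the same sign for every $\phi\in K(X)$, which is equivalent to $W_1=W_2$. Since $K$ is algebraically closed, every $\phi\in K(X)$ factors as $\phi = c\prod_j(X-\gamma_j)^{\epsilon_j}$ with $c,\gamma_j\in K$ and $\epsilon_j\in\{\pm 1\}$, so $w_i(\phi)$ is completely determined by the values $w_i(X-\gamma)$ for $\gamma\in K$. I will pick $\alpha$ in the common nonempty set supplied by the hypothesis and apply formula \eqref{eq:calcololim} of Proposition \ref{prop:pseudolim} to express each $w_i(X-\gamma)$ in terms of the single quantity $\Lambda_i:=w_i(X-\alpha)$, thereby reducing everything to a comparison between $\Lambda_1$ and $\Lambda_2$.

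In Case 2 (the hypothesis on $\inslim_2$) this comparison is immediate. By Proposition \ref{prop:pseudolim}\ref{prop:pseudolim:ball2}, $\inslim_2(W_i) = \alpha + c_iV$ with $v(c_i)=\Lambda_i\in\Gamma_v$, and the set-theoretic equality $c_1V = c_2V$ forces $\Lambda_1=\Lambda_2$. Substituting back into formula \eqref{eq:calcololim} yields $w_1(X-\gamma) = w_2(X-\gamma)$ for every $\gamma\in K$, and hence $w_1=w_2$ on all of $K(X)$ by the factorization above.

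Case 1 (the hypothesis on $\inslim_1$) is the one requiring real work, since $\Lambda_i\notin\Gamma_v$ and the two values may a priori live in different extensions of $\Gamma_v$. What the equality of the $\inslim_1$-sets gives me, via Proposition \ref{prop:pseudolim}\ref{prop:pseudolim:ball1}, is that $\{x\in K : v(x) > \Lambda_1\} = \{x\in K : v(x) > \Lambda_2\}$; equivalently, $\Lambda_1$ and $\Lambda_2$ induce the same Dedekind cut on $\Gamma_v$. Grouping the factors of $\phi$ according to whether $\gamma_j$ lies in the common ball $\alpha + I$ (with $I=\{x:v(x)>\Lambda_i\}$) and applying formula \eqref{eq:calcololim}, I obtain
$$w_i(\phi) = A + k\Lambda_i, \qquad A\in\Gamma_v,\ k\in\Z,$$
with $A$ and $k$ the same for $i=1,2$. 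If $k=0$ there is nothing to check; if $k\ne 0$, the condition $A+k\Lambda_i\geq 0$ is a comparison between $\Lambda_i$ and $-A/k$. Here I will exploit that $\Gamma_v$ is divisible — a free consequence of $K$ being algebraically closed, since each $g\in\Gamma_v$ admits an $n$-th root in $K$ — to ensure $-A/k\in\Gamma_v$. The truth value of the comparison then depends only on the cut of $\Lambda_i$ in $\Gamma_v$, which is the same for both $i$; hence $w_1(\phi)\geq 0 \Leftrightarrow w_2(\phi)\geq 0$, and $W_1=W_2$.
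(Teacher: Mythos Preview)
Your proof is correct and follows essentially the same route as the paper: factor $\phi$ into linear terms over the algebraically closed field $K$, use formula \eqref{eq:calcololim} to express each $w_i(X-\gamma)$ in terms of $\Lambda_i = w_i(X-\alpha)$, and then invoke algebraic closure (for you, via divisibility of $\Gamma_v$; for the paper, via extracting a $|t|$-th root $e$ of $d$ in $K$ and reducing to $e(X-\alpha)^{\pm 1}$). The one noteworthy difference is your handling of Case~2, which is cleaner than the paper's: you observe that $c_1V=c_2V$ forces $\Lambda_1=\Lambda_2\in\Gamma_v$, so $w_1$ and $w_2$ agree on every linear factor and hence on all of $K(X)$; the paper instead runs the same reduction as in Case~1 and translates $w(e(X-\alpha)^{\pm 1})\geq 0$ back into a membership condition on $\inslim_2$, which is unnecessary once $\Lambda_1=\Lambda_2$ is known.
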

\begin{proof}
Let $\inslim=\inslim(W_1)=\inslim(W_2)$; we shall use $w$ to indicate either $w_1$ or $w_2$. Fix also $\alpha\in\inslim$.

Let $\phi\in K(X)$, and write it as $\phi(X)=c\prod_{\gamma\in\Omega}(X-\gamma)^{\epsilon_\gamma}$, where $\Omega$ is the multiset of critical points of $\phi$, $c\in K$ and $\epsilon_\gamma\in\{-1,+1\}$.

For every $\gamma\notin\inslim$, by Proposition \ref{prop:pseudolim}\ref{prop:pseudolim:w} $w(X-\gamma)<w(X-\alpha)$, so $w(X-\gamma)=v(\alpha-\gamma)$; furthermore, if $\gamma_1,\gamma_2\in\inslim$,  then $w(X-\gamma_1)=w(X-\gamma_2)$. Hence, $w(\phi)=w(\psi)$, where $\psi(X)=d(X-\alpha)^t$ for some $d\in K$, $t\inZ$ (more precisely, $d=c\prod_{\gamma\in\Omega\setminus\inslim}(\alpha-\gamma)^{\epsilon_\gamma}$ and $t=\sum_{\gamma\in\Omega\cap\inslim}\epsilon_\gamma$.) Note that, in particular, we have both $w_1(\phi)=w_1(\psi)$ and $w_2(\phi)=w_2(\psi)$.

If $t=0$, then $w(\phi)=v(d)$ and so its sign does not depend on whether $w=w_1$ or $w=w_2$; i.e., $\phi\in W_1$ if and only if $\phi\in W_2$. If $t\neq 0$, then $\psi=(e(X-\alpha)^\epsilon)^{|t|}$, where $e\in K$ is such that $e^{|t|}=d$ and $\epsilon=t/|t|$; thus, $\psi\in W_i$ if and only if $e(X-\alpha)^\epsilon\in W_i$, for $i=1,2$, since a valuation domain is integrally closed.

Suppose now that  $\alpha\in\inslim_1(W)$ and $t>0$. Then,
\begin{equation*}
w(e(X-\alpha))\geq 0\iff w(X-\alpha)\geq v(e^{-1})\iff w(X-\alpha+e^{-1})=v(e^{-1})
\end{equation*}
(since $w(X-\alpha)\notin\Gamma_v$), i.e., if and only if $\alpha-e^{-1}\notin\inslim_1(W)$. Since $\inslim_1(W_1)=\inslim_1(W_2)$, it follows that $w_1(e(X-\alpha))\geq 0$ if and only if $w_2(e(X-\alpha))\geq 0$, i.e., $\phi\in W_1$ if and only if $\phi\in W_2$, as claimed. Analogously, if $t<0$,  then
\begin{equation*}
w\left(\frac{e}{X-\alpha}\right)\geq 0\iff w(X-\alpha)\leq v(e)\iff w(X-\alpha+e)=w(X-\alpha),
\end{equation*}
that is, if and only if $\alpha-e\in\inslim_1(W)$. As before, this implies that $\phi\in W_1$ if and only if $\phi\in W_2$; hence, $W_1=W_2$.

Suppose now that $\alpha\in\inslim_2(W)$. If $t>0$, then $w(e(X-\alpha))\geq 0$ if and only if $w(X-\alpha)>w(f)$ for all $f\in K$ such that $v(e^{-1})>v(f)$; that is, if and only if $w(X-\alpha+f)=v(f)$ for all such $f$. This happens if and only if $\alpha-f\notin\inslim$ for all these $f$; since $v(e^{-1})>v(f)$ depends only on $V$, it follows as before that $w_1(e(X-\alpha))\geq 0$ if and only if $w_2(e(X-\alpha))\geq 0$, i.e., $\phi\in W_1$ if and only if $\phi\in W_2$, as claimed. If $t<0$, then, in the same way, $w(e/(X-\alpha))\geq 0$ if and only if $w(X-\alpha)<v(f)$ for all $f$ such that $v(f)<v(e)$; as above, this implies that $\phi\in W_1$ if and only if $\phi\in W_2$. Hence, $W_1=W_2$.
\end{proof}

\begin{Ex}
In Proposition \ref{prop:uglim} we can't drop the hypothesis that $K$ is algebraically closed: for example, take $\alpha\in K$ and let $\delta\in\insQ\Gamma_v\setminus\Gamma_v$. Let $E\subset K$ be a pseudo-convergent sequence having a pseudo-limit $\alpha$ and such that $\Br(E)=I=\{x\in K\mid v(x)>\delta\}$; by Proposition \ref{prop:limVE}\ref{prop:limVE:L1}, $\inslim_1(V_E)=\alpha+I\neq\emptyset$. Take now the monomial valuation $w=v_{\alpha,\delta}$: then, $\inslim_1(W)=\alpha+I=\inslim_1(V_E)$, but $W\neq V_E$ since the value group of $w$ is contained in the divisible hull of the value group of $v$, while $\Gamma_{v_E}=\Gamma_v\oplus\Delta_E\insZ$ is not (by Proposition \ref{prop:description} and Lemma \ref{minimal degree in PE}).
\end{Ex}

Joining the previous propositions, we can prove that if $K$ is algebraically closed,  then any extension of $V$ to $K(X)$ is in the form $V_E$ for some pseudo-monotone sequence $E$; however, we postpone this result to Theorem \ref{teor:fundamentalsatz} in order to cover a more general case.

\begin{prop}\label{prop:extensionsK(X)}
Let $E\subset K$ be a pseudo-monotone sequence, and let $U$ be an extension of $V$ to $\overline{K}$. Then $U_E$ is the unique common extension of $U$ and $V_E$ to $\overline{K}(X)$. Moreover, if $F\subset K$ is another pseudo-monotone sequence  such that $E$ and $F$ are either both pseudo-stationary or both strictly pseudo-monotone, then $V_E=V_F$ if and only if $U_E=U_F$.
\end{prop}
\begin{proof}
The first claim can be proved in the same way as \cite[Theorem 5.7]{PS}, but we repeat the proof for clarity. Clearly, $U_E$ extends both $U$ and $V_E$. Suppose there is another extension $W$ of $U$ and $V_E$ to $\overline{K}(X)$: then, by \cite[Chapt. VI, \S 8, 6., Corollary 1]{bourbaki-inglese}, there is a $K(X)$-automorphism $\sigma$ of $\overline{K}(X)$ such that $U_E=\sigma(W)$. Let $\rho=\sigma^{-1}$: then,
\begin{equation*}
\begin{aligned}
\rho(U_E)= & \{\rho\circ\phi\in \overline{K}(X)\mid \phi(s_\nu)\in U\text{~eventually}\}=\\
& \{\rho\circ\phi\in \overline{K}(X)\mid \sigma\circ\rho(\phi(s_\nu))\in U\text{~eventually}\}.
\end{aligned}
\end{equation*}
Since $s_\nu\in K$ and $\rho|_K$ is the identity, $\rho(\phi(s_\nu))=(\rho\circ\phi)(s_\nu)$; hence,
\begin{equation*}
\begin{aligned}
\rho(U_E)= & \{\rho\circ\phi\in \overline{K}(X)\mid \sigma((\rho\circ\phi)(s_\nu))\in U\text{~eventually}\}=\\
& \{\psi\in \overline{K}(X)\mid \sigma(\psi(s_\nu))\in U\text{~eventually}\}.
\end{aligned}
\end{equation*}
In particular, note that $\rho(U_E)=\rho(U)_E$.

Since both $U_E$ and $W$ are extensions of $U$, for any $t\in\overline{K}$ we have that $t\in U$ if and only if $\sigma(t)\in U$; in particular, this happens for $t=\psi(s_\nu)$. It follows that $\rho(U_E)=W=U_E$, as claimed. 

We prove now the last claim. One direction is clear, since $V_E=U_E\cap K$ and $V_F=U_F\cap K$. The other implication follows from the previous claim, since $U_E$ is the unique common extension of $V_E$ and $U$ and $U_F$ is the unique common extension of $V_F$ and $U$.
\end{proof}

\section{Equivalence of pseudo-monotone sequences}\label{section:equivalence}
Using the results of the previous sections, we can now tackle the problem of when two pseudo-monotone sequences have the same associated extension of $V$ to $K(X)$.

\begin{prop}\label{prop:equivalence}
Let $E,F\subset K$ be two pseudo-monotone sequences that are either both pseudo-stationary or both strictly pseudo-monotone. Let $u$ be an extension of $v$ to $\overline{K}$. If  $\inslim_E^u\neq\emptyset$, then $V_E=V_F$ if and only if $\inslim_E^u=\inslim_F^u$.
Furthermore, if $\inslim_E\neq\emptyset$, then the previous condition is also equivalent to the corresponding one over $K$.
\end{prop}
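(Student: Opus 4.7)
The strategy is to reduce the problem to the algebraically closed setting, where Proposition~\ref{prop:uglim} pins down an extension of $V$ by its pseudo-limit set. Let $U$ be the extension of $V$ to $\overline{K}$ associated to $u$: by Proposition~\ref{prop:extensionsK(X)}, the equality $V_E=V_F$ is equivalent to $U_E=U_F$. Applying Proposition~\ref{prop:limVE} to $E$ and $F$ viewed as pseudo-monotone sequences in $(\overline{K},u)$ gives the identifications $\inslim_E^u=\inslim_1(U_E)$ and $\inslim_F^u=\inslim_1(U_F)$ in the strictly pseudo-monotone case, and the analogous identifications with $\inslim_2$ in the pseudo-stationary case. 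Since by hypothesis $E$ and $F$ are of the same kind, the nonempty components of $\inslim(U_E)$ and $\inslim(U_F)$ are indexed by the same $i\in\{1,2\}$.

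With these identifications in hand, the first assertion is immediate. In one direction, $V_E=V_F$ forces $U_E=U_F$, and transporting the pseudo-limit sets through the identifications yields $\inslim_E^u=\inslim_F^u$. In the other direction, if $\inslim_E^u=\inslim_F^u\neq\emptyset$, then the common nonempty set equals $\inslim_i(U_E)=\inslim_i(U_F)$ for the same index $i$, so Proposition~\ref{prop:uglim}, which applies because $\overline{K}$ is algebraically closed, delivers $U_E=U_F$, whence $V_E=V_F$.

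For the \emph{furthermore} clause, the implication $V_E=V_F\Rightarrow\inslim_E=\inslim_F$ is immediate from Proposition~\ref{prop:limVE} applied over $K$ itself. For the converse, suppose $\inslim_E=\inslim_F\neq\emptyset$ and fix $\alpha$ in the common set. By Lemma~\ref{pseudolimiti}, $\Br(E)=\inslim_E-\alpha=\inslim_F-\alpha=\Br(F)$. The crucial step is to lift this equality from $K$ to $\overline{K}$, obtaining $\Br_u(E)=\Br_u(F)$: if some $y\in\overline{K}$ lay in the symmetric difference, then $u(y)$ would lie above one gauge but not the other, and selecting an element of $K$ whose value equals an offending gauge value (possible because each gauge value lies in $\Gamma_v$) would produce an element of $K$ witnessing $\Br(E)\neq\Br(F)$, a contradiction. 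Combining the lifted equality with Lemma~\ref{pseudolimiti} applied in $\overline{K}$ yields $\inslim_E^u=\alpha+\Br_u(E)=\alpha+\Br_u(F)=\inslim_F^u$, and the first part of the proposition then gives $V_E=V_F$.

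The main technical hurdle is thus the lifting step $\Br(E)=\Br(F)\Rightarrow\Br_u(E)=\Br_u(F)$, which rests on the fact that the gauge of any pseudo-monotone sequence lies in $\Gamma_v$ and is therefore realized by elements of $K$; this ensures that any genuine discrepancy between $\Br_u(E)$ and $\Br_u(F)$ already descends to a discrepancy between $\Br(E)$ and $\Br(F)$.
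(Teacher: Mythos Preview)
Your proof is correct and follows essentially the same route as the paper: reduce to $\overline{K}$ via Proposition~\ref{prop:extensionsK(X)}, identify pseudo-limit sets with $\inslim_i(U_E)$ through Proposition~\ref{prop:limVE}, and invoke Proposition~\ref{prop:uglim}; for the \emph{furthermore} clause, pass from $\inslim_E=\inslim_F$ to $\Br(E)=\Br(F)$ and then lift to $\Br_u(E)=\Br_u(F)$. The only difference is that you spell out the lifting step via the gauge argument, whereas the paper simply asserts it (``In particular, $\Br_u(E)=\Br_u(F)$''); your justification is sound, since the gauge values lie in $\Gamma_v$ and hence any discrepancy over $\overline{K}$ descends to one over $K$.
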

\begin{proof}
By Proposition \ref{prop:extensionsK(X)}, it is enough to show that $U_E=U_F$ if and only if $\inslim_E^u=\inslim_F^u$.

Suppose $\inslim_E^u\neq\emptyset$. Then $U\subset U_E$ is not immediate by Proposition \ref{prop:description}, and by Proposition \ref{prop:limVE} $\inslim_E^u=\inslim_2(U_E)$ if $E$ is pseudo-stationary and $\inslim_E^u=\inslim_1(U_E)$ if $E$ is strictly pseudo-monotone. Hence, if $\inslim_E^u=\inslim_F^u$, then also $\inslim^u_F\neq\emptyset$; if $E$ and $F$ are both  pseudo-stationary, then $\inslim_2(U_F)=\inslim_2(U_E)\neq\emptyset$ and so $U_E=U_F$ by Proposition \ref{prop:uglim}, while if $E$ and $F$ are strictly pseudo-monotone the same conclusion holds by the same proposition. Conversely, if $U_E=U_F$, then $\inslim^u_E=\inslim(U_E)=\inslim(U_F)=\inslim^u_F$ and so $E$ and $F$ have the same pseudo-limits (in $\overline{K}$).

Suppose now $\inslim_E\neq\emptyset$. If $\inslim_E^u=\inslim_F^u$ then $\inslim_E=\inslim_F$. Conversely, if $\inslim_E=\inslim_F$, then by Lemma \ref{pseudolimiti} $\Br(E)=\Br(F)$. In particular,  $\Br_u(E)=\Br_u(F)$ so by the same Lemma  $\inslim_E^u=\inslim_F^u$. 
\end{proof}

\begin{oss}\label{equality pcv and pdv}
~\begin{enumerate}
\item Note that, under the same assumptions of Proposition \ref{prop:equivalence}, by Lemma \ref{pseudolimiti}  $E$ and $F$ have the same set of pseudo-limits (either over $K$ or over $\overline K$) if and only if they have the same breadth ideal and they have at least one pseudo-limit in common.
\item It is possible to have $V_E=V_F$ even if $E$ is pseudo-convergent and $F$ is pseudo-divergent: for example, if $I$ is not finitely generated and it is not equal to $cM$ for any $c\in K$, we can find both a pseudo-convergent sequence $E$ and a pseudo-divergent sequence $F$ such that $I=0+I$ is the set of pseudo-limits of $E$ and $F$  (Lemmas \ref{pseudolimiti} and \ref{lemma:carattbreadth}). By Proposition \ref{prop:equivalence}, $V_E=V_F$.
\item If $E,F$ are pseudo-divergent sequences with $\Br(E)=K=\Br(F)$ (that is, if the gauges of $E,F$ are not bounded from below, see \S \ref{pdv}), then $\inslim_E=K=\inslim_F$, and so $V_E=V_F$. This extension is exactly the valuation domain $V_{\infty}$ considered in Example \ref{Vinfty}.
\end{enumerate}
\end{oss}

Let $E,F$ be two Cauchy sequences with limits $x_E,x_F\in K$, respectively. By Proposition \ref{prop:equivalence}, $V_E=V_F$ if and only if $x_E=x_F$; by extending $v$ to the completion $\widehat{K}$, we see that this can happen even if the limits are not in $K$. Thus, the condition $V_E=V_F$ generalizes the notion of equivalence between Cauchy sequences: for this reason, we say that two pseudo-monotone sequences are \emph{equivalent} if $V_E=V_F$. We now want to characterize this notion in a more intrinsic way, but we need to distinguish between the different types. The first result, involving pseudo-convergent sequences, is a generalization of \cite[Theorem 5.4]{PS}.

\begin{prop}\label{prop:equivalence-pcv}
Let $E=\{s_\nu\}_{\nu\in\Lambda},F=\{t_\mu\}_{\mu\in\Lambda}\subset K$ be pseudo-convergent sequences. Then $E$ and $F$ are equivalent if and only if $\Br(E)=\Br(F)$ and, for every $\kappa\in\Lambda$, there are $\nu_0,\mu_0\in\Lambda$ such that, whenever $\nu\geq \nu_0$, $\mu\geq \mu_0$, we have
$v(s_\nu-t_\mu)>v(t_{\rho}-t_\kappa)$, for any $\rho>\kappa$.
\end{prop}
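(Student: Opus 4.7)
The strategy is to reduce the algebraic type case to Proposition~\ref{prop:equivalence} by transferring a pseudo-limit from $E$ to $F$ in $\overline{K}$, and to handle the transcendental type case (where Proposition~\ref{prop:equivalence} is silent because $\inslim_E^u$ is empty for every extension $u$ of $v$ to $\overline{K}$) by a direct comparison of $v_E$ and $v_F$ on polynomials. I will write $\{\delta_\nu^E\},\{\delta_\mu^F\}$ for the gauges of $E$ and $F$ and fix an extension $u$ of $v$ to $\overline{K}$.

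\emph{Forward direction (uniform in type).} Assume $V_E=V_F$. For $\Br(E)=\Br(F)$, take $x\in\Br(E)$ and suppose for contradiction that $v(x)\le\delta_\mu^F$ for some $\mu$; then $(X-t_\mu)/x\in V_F=V_E$ forces $v(s_\nu-t_\mu)\ge v(x)$ for large $\nu$, and applying the ultrametric to $s_{\nu+1}-s_\nu=(s_{\nu+1}-t_\mu)-(s_\nu-t_\mu)$ gives $\delta_\nu^E\ge v(x)>\delta_\nu^E$. For the distance condition at a fixed $\kappa$, the equality $\Br(E)=\Br(F)$ supplies $\nu_0$ with $\delta_{\nu_0}^E>\delta_\kappa^F$; applying Proposition~\ref{values rat func on psm seq} to $X-s_{\nu_0}$ (whose only critical point is not a pseudo-limit of either sequence) gives $v_E(X-s_{\nu_0})=\delta_{\nu_0}^E$, so $V_E=V_F$ produces $\mu_0$ past which $v(t_\mu-s_{\nu_0})=\delta_{\nu_0}^E$, and the ultrametric on $s_\nu-t_\mu=(s_\nu-s_{\nu_0})+(s_{\nu_0}-t_\mu)$ concludes.

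\emph{Backward direction, algebraic case.} If $E$ has a pseudo-limit $\alpha\in\inslim_E^u$, the plan is to show $\alpha\in\inslim_F^u$. For each large $\mu$, the distance condition with $\kappa=\mu$ produces $\mu'>\mu$ such that $v(s_\nu-t_{\mu'})>\delta_\mu^F$ for $\nu$ large; combined with $v(t_\mu-t_{\mu'})=\delta_\mu^F$, the ultrametric on $t_\mu-s_\nu=(t_\mu-t_{\mu'})+(t_{\mu'}-s_\nu)$ gives $v(s_\nu-t_\mu)=\delta_\mu^F$ for such $\nu$. Choosing $\nu$ even larger so that $\delta_\nu^E>\delta_\mu^F$ (possible by cofinality of the gauges, itself a consequence of $\Br(E)=\Br(F)$), the decomposition $\alpha-t_\mu=(\alpha-s_\nu)+(s_\nu-t_\mu)$ with $u(\alpha-s_\nu)=\delta_\nu^E$ forces $u(\alpha-t_\mu)=\delta_\mu^F$. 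The same cofinality yields $\Br_u(E)=\Br_u(F)$, hence $\inslim_E^u=\inslim_F^u\neq\emptyset$, and Proposition~\ref{prop:equivalence} concludes $V_E=V_F$.

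\emph{Backward direction, transcendental case.} By the transfer argument above, if either sequence has a pseudo-limit in $\overline{K}$ then so does the other, so in this case both sequences are of transcendental type and $\inslim_E^u=\inslim_F^u=\emptyset$. The main obstacle is that Proposition~\ref{prop:equivalence} does not apply, so I must prove $V_E=V_F$ directly. The plan is to show $v(f(s_\nu))=v(f(t_\mu))$ eventually for every $f\in K[X]$, which extends to $K(X)$ via the quotient rule. Factoring $f$ in $\overline{K}[X]$, this reduces to showing that for each $\alpha\in\overline{K}$ the eventual constants $\eta(\alpha,E):=u(s_\nu-\alpha)$ and $\eta(\alpha,F):=u(t_\mu-\alpha)$ agree. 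Supposing without loss of generality that $\eta(\alpha,E)<\eta(\alpha,F)$, the identity $s_\nu-t_\mu=(s_\nu-\alpha)-(t_\mu-\alpha)$ forces $v(s_\nu-t_\mu)=\eta(\alpha,E)$ for $\nu,\mu$ large, so the distance condition yields $\eta(\alpha,E)>\delta_\kappa^F$ for every $\kappa$; but $\alpha\notin\inslim_E^u$ forces $\eta(\alpha,E)<\delta_{\nu'}^E$ for some $\nu'$, and then any $c\in K$ with $v(c)=\delta_{\nu'}^E$ lies in $\Br(F)\setminus\Br(E)$, contradicting $\Br(E)=\Br(F)$.
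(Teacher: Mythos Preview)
Your proof is correct, and it differs from the paper's in two interesting ways.

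For the implication $V_E=V_F\Rightarrow$ (conditions), the paper first reduces to $K$ algebraically closed via Proposition~\ref{prop:extensionsK(X)}, then (in the algebraic case) invokes Proposition~\ref{prop:equivalence} to get $\inslim_E=\inslim_F$ and hence $\Br(E)=\Br(F)$, and finally uses a common pseudo-limit $\alpha$ (or, in the transcendental case, $X$ itself as a pseudo-limit with respect to $v_E$) to bound $v(s_\nu-t_\mu)$ via $v(s_\nu-\alpha)+v(\alpha-t_\mu)$. Your argument is more direct and type-uniform: you test membership in $V_E=V_F$ on the specific functions $(X-t_\mu)/x$ and $(X-s_{\nu_0})/c$ to force the breadth and distance conditions. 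One small remark: the parenthetical ``not a pseudo-limit of either sequence'' is stronger than what you use or can assert a priori; you only need (and only use) that $s_{\nu_0}\notin\inslim_E$, which is Lemma~\ref{pseudolimiti}.

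For the transcendental half of the implication (conditions) $\Rightarrow V_E=V_F$, the paper again uses the device of treating $X$ as a pseudo-limit: it shows that $X$ is a pseudo-limit of $F$ with respect to $v_E$ and then appeals to the uniqueness part of Kaplansky's \cite[Theorem~2]{kaplansky-maxfield} to conclude $v_E=v_F$. Your route avoids Kaplansky entirely by showing, root by root, that the eventual constants $\eta(\alpha,E)$ and $\eta(\alpha,F)$ coincide for every $\alpha\in\overline{K}$, and then summing over the factorisation of $f$. This is more elementary and self-contained, at the cost of being slightly longer; the paper's approach is more conceptual (it treats $X$ as a ``universal'' pseudo-limit and lets Kaplansky's theorem do the work). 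In the algebraic case your transfer-of-pseudo-limit argument is essentially the same as the paper's, just carried out in $\overline{K}$ rather than after a preliminary reduction to $K=\overline{K}$.
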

Note that the condition of the proposition is not symmetrical in $E$ and $F$, despite the fact that the definition of the equivalence relation is symmetric.
\begin{proof}
By Proposition \ref{prop:equivalence}, without loss of generality we can suppose that $K$ is algebraically closed. Let $\{\delta_\nu\}_{\nu\in\Lambda},\{\delta'_\nu\}_{\nu\in\Lambda}$  be the gauges of $E$ and $F$, respectively. We will use the following remark:  $\Br(E)\subseteq \Br(F)$ if and only if for each $\mu\in\Lambda$ there exists $\nu\in\Lambda$ such that $\delta'_\mu\leq\delta_\nu$.

We assume first that the conditions of the statement hold. Suppose that $E$ is of algebraic type: then, $E$ has a pseudo-limit $\beta\in K$. Fix $\mu\in\Lambda$. By the above remark, there exists $\nu_0\in\Lambda$ such that for all $\nu\geq \nu_0$, $\delta_\nu>\delta'_\mu$.  There also exist $\iota_0,\mu_0\in\Lambda$ such that for all $\nu\geq \iota_0$, $\kappa\geq \mu_0$, we have $v(s_\nu-t_\kappa)>\delta'_\mu$.  Then, for $\nu\geq\max\{\iota_0,\nu_0\}$ and $\kappa>\max\{\mu,\mu_0\}$ we have
\begin{equation*}
v(\beta-t_\mu)=v(\beta-s_\nu+s_\nu-t_\kappa+t_\kappa-t_\mu)=\delta'_\mu
\end{equation*}
so that $\beta$ is a pseudo-limit of $F$. Therefore, $F$ is of algebraic type and $\mathcal{L}_E\subseteq \mathcal{L}_F$. The reverse inclusion is proved symmetrically, and $V_E=V_F$ follows from Proposition \ref{prop:equivalence}.

Suppose now that $E$ is of transcendental type: by the previous part of the proof, also $F$ must be of transcendental type. We can repeat the previous reasoning by using $X$ instead of $\beta$ (since $X$ is a pseudo-limit of $E$ with respect to $v_E$: see \cite[Theorem 3.8]{PS} or Theorem \ref{VE valuation domain}); this proves that $X$ is a pseudo-limit of $F$ with respect to $v_F$. The fact that $V_E=V_F$ now follows from \cite[Theorem 2]{kaplansky-maxfield}.

Assume now that $V_E=V_F$. Suppose first that $E$ is of algebraic type: then, $\inslim_E\neq\emptyset$, and by Proposition \ref{prop:equivalence} we must have $\inslim_F=\inslim_E$, and thus $F$ is also of algebraic type. In particular, $\Br(E)=\Br(F)$. Let $\alpha\in\inslim_E=\inslim_F$. Then,
\begin{equation*}
v(s_\nu-t_\mu)=v(s_\nu-\alpha+\alpha-t_\mu)\geq\min\{\delta_\nu,\delta'_\mu\}.
\end{equation*}
By the remark, for every $\kappa$ there is an $\iota_0$ such that $\delta_{\iota_0}>\delta'_\kappa$; choosing $\mu_0>\kappa$ we have that $E$ and $F$ satisfy the conditions of the statement.

Suppose now that $E$ is of transcendental type; as before, this implies that also $F$ is of transcendental type. Without loss of generality we may suppose that $\Br(F)\subseteq\Br(E)$. If this containment is strict, then there exists a $c\in\Br(E)\setminus\Br(F)$. Then, $\frac{c}{X-\alpha}$ is in $V_E$ for each $\alpha\in K$ (because $X$ is a pseudo-limit of $E$ with respect to $v_E$ and $E$ has no pseudo-limits in $K$). On the other hand, for every $\nu$ we have $\frac{c}{X-t_\nu}\notin V_F$, a contradiction. Therefore $\Br(E)=\Br(F)$.  We know that $X$ is a pseudo-limit of $F$ with respect to $v_F$, so that $\{v_F(X-t_\mu)\}_{\mu\in\Lambda}$ is a (eventually) strictly increasing sequence. In particular, since $V_E=V_F$ implies that $\lambda\circ v_E=v_F$ for some isomorphism of totally ordered groups $\lambda:\Gamma_{v_E}\to\Gamma_{v_F}$, it follows that $\{v_E(X-t_\mu)\}_{\mu\in\Lambda}$ is a (eventually) strictly increasing sequence, so that $X$ is a pseudo-limit of $F$ with respect to $v_E$. Thus $v_E(X-t_\mu)=\delta'_\mu$, for each $\mu\in\Lambda$ (sufficiently large). The proof now proceeds as above, replacing a pseudo-limit $\alpha$ of $E$ and $F$ by $X$ (which is a pseudo-limit of $E$ and $F$ with respect to $v_E$). Hence, the conditions of the statement holds.
\end{proof}

The cases of pseudo-divergent and pseudo-stationary sequences are very similar, with the further simplification that in these cases we do not need to consider sequences of transcendental type (which do not exist).
\begin{prop}\label{prop:equivalence-pdv}
Let $E=\{s_\nu\}_{\nu\in\Lambda},F=\{t_\mu\}_{\mu\in\Lambda}\subset K$ be pseudo-divergent sequences. Then $E$ and $F$ are equivalent if and only if $\Br(E)=\Br(F)$ and there exist $\nu_0, \mu_0\in\Lambda$ such that for all $\nu\geq \nu_0, \mu\geq \mu_0$ there exists $\kappa\in\Lambda$ such that $v(s_\nu-t_\mu)\geq  v(t_{\rho}-t_\kappa)$, for any $\rho<\kappa$.
\end{prop}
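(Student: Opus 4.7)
The plan is to mimic the argument of Proposition \ref{prop:equivalence-pcv}, with a substantial simplification: by Lemma \ref{pseudolimiti}, every element of a pseudo-divergent sequence is itself a pseudo-limit, so $\inslim_E$ and $\inslim_F$ are automatically nonempty in $K$ and no transcendental-type case intervenes. Hence Proposition \ref{prop:equivalence} reduces the problem at once to showing that $\inslim_E = \inslim_F$ is equivalent to the two conditions stated. Invoking Lemma \ref{pseudolimiti} once more, I would write $\inslim_E = s_\nu + \Br(E)$ and $\inslim_F = t_\mu + \Br(F)$ for any choice of $\nu, \mu \in \Lambda$, so that the equality $\inslim_E = \inslim_F$ is equivalent to the conjunction of $\Br(E) = \Br(F)$ and $s_\nu - t_\mu \in \Br(F)$ for some (equivalently, for every) pair $\nu, \mu$.

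The next step is to translate the quantitative condition in the statement into this language. Recall from \S\ref{pdv} that for $\rho < \kappa$ in $\Lambda$ one has $v(t_\rho - t_\kappa) = \delta_\kappa^F$, the gauge of $F$ at $\kappa$; so the condition in the statement reads precisely: for large $\nu, \mu$, there exists a (necessarily non-minimal) $\kappa$ with $v(s_\nu - t_\mu) \geq \delta_\kappa^F$. The definition of $\Br(F)$ uses the strict inequality $v(x) > \delta_\kappa^F$, but since $\Lambda$ has no maximum and the gauge is strictly decreasing, I would reconcile the two by passing from $\kappa$ to any $\kappa' > \kappa$, which gives $v(s_\nu - t_\mu) \geq \delta_\kappa^F > \delta_{\kappa'}^F$ and hence $s_\nu - t_\mu \in \Br(F)$; the converse implication is immediate. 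Thus the technical condition in the statement is exactly the membership $s_\nu - t_\mu \in \Br(F)$ for all sufficiently large $\nu, \mu$.

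Combining these observations yields both directions with little further work. For $(\Rightarrow)$, the equality $V_E = V_F$ forces $\inslim_E = \inslim_F$, hence $\Br(E) = \Br(F)$ and $s_\nu - t_\mu \in \Br(F)$ for all $\nu, \mu$, which certainly implies the required inequality with arbitrary choices of $\nu_0, \mu_0$. For $(\Leftarrow)$, the hypotheses together make $s_\nu + \Br(E)$ and $t_\mu + \Br(E) = t_\mu + \Br(F)$ the same coset for large $\nu, \mu$, so $\inslim_E = \inslim_F$, and Proposition \ref{prop:equivalence} closes the argument. I expect no real obstacle; the only delicate point is the careful passage between strict and non-strict inequalities in the definition of $\Br(F)$, which is handled routinely using the absence of a maximum in $\Lambda$.
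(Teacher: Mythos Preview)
Your proposal is correct and follows exactly the line the paper intends: the paper omits the proof, remarking only that the pseudo-divergent case is similar to Proposition~\ref{prop:equivalence-pcv} with the simplification that no transcendental type occurs, and then notes explicitly that the stated condition ``amounts to say that $s_\nu-t_\mu$ is definitively in the breadth ideal $\Br(E)=\Br(F)$''. Your reduction via Proposition~\ref{prop:equivalence} and Lemma~\ref{pseudolimiti}, together with your handling of the strict versus non-strict inequality using that $\Lambda$ has no maximum, is precisely this argument made explicit.
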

Note that the above condition amounts to saying that $s_\nu-t_\mu$ is eventually in the breadth ideal $\Br(E)=\Br(F)$.

The following is the analogous result for pseudo-stationary sequences.
\begin{prop}\label{prop:equivalence-pstaz}
Let $E=\{s_\nu\}_{\nu\in\Lambda},F=\{t_\mu\}_{\mu\in\Lambda}\subset K$ be pseudo-stationary sequences with breadth $\delta_E$ and $\delta_F$, respectively. Then $E$ and $F$ are equivalent if and only if $\delta_E=\delta_F=\delta$ and $v(s_\nu-t_\mu)\geq\delta$ for all $\nu,\mu\in\Lambda$.
\end{prop}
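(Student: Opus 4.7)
The plan is to reduce the proposition to a comparison of pseudo-limit sets via Proposition \ref{prop:equivalence}, and then to unfold what it means for two pseudo-limit sets of pseudo-stationary sequences to coincide. Note that pseudo-stationary sequences always have pseudo-limits in $K$ (every element of $E$ lies in $\inslim_E$ by Lemma \ref{pseudolimiti}), so $\inslim_E\neq\emptyset\neq\inslim_F$. Hence by Proposition \ref{prop:equivalence}, $V_E=V_F$ if and only if $\inslim_E=\inslim_F$.

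For the sufficiency direction, suppose $\delta_E=\delta_F=\delta$ and $v(s_\nu-t_\mu)\geq\delta$ for all $\nu,\mu\in\Lambda$. Since $\delta_E=\delta_F$ we have $\Br(E)=\Br(F)=\{x\in K\mid v(x)\geq\delta\}$. Fix any $\mu\in\Lambda$. For every $\nu$, $s_\nu-t_\mu\in\Br(F)$ by hypothesis, so $s_\nu\in t_\mu+\Br(F)=\inslim_F$, the last equality by Lemma \ref{pseudolimiti}. Applying Lemma \ref{pseudolimiti} to $F$ at the pseudo-limit $s_\nu$ gives $\inslim_F=s_\nu+\Br(F)=s_\nu+\Br(E)=\inslim_E$, again by Lemma \ref{pseudolimiti} applied to $E$. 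Thus $\inslim_E=\inslim_F$ and hence $V_E=V_F$.

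For the necessity direction, assume $V_E=V_F$. By Proposition \ref{prop:equivalence}, $\inslim_E=\inslim_F$; fix a common pseudo-limit $\alpha$ (e.g.\ any $s_\nu$, which lies in $\inslim_E=\inslim_F$). By Lemma \ref{pseudolimiti} applied to both sequences at $\alpha$,
\[
\Br(E)=\inslim_E-\alpha=\inslim_F-\alpha=\Br(F).
\]
Since $\Br(E)=\{x\in K\mid v(x)\geq\delta_E\}$ and $\Br(F)=\{x\in K\mid v(x)\geq\delta_F\}$ are principal fractional ideals, this forces $\delta_E=\delta_F=:\delta$. Moreover, for each $\nu$ we have $s_\nu\in\inslim_E=\inslim_F=t_\mu+\Br(F)$ for every $\mu$, whence $s_\nu-t_\mu\in\Br(F)$, i.e.\ $v(s_\nu-t_\mu)\geq\delta$, as required.

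The argument is essentially bookkeeping, so no step is genuinely hard; the only point that deserves care is observing that, for pseudo-stationary sequences, pseudo-limits always exist (preventing the vacuous ``$\inslim_E=\emptyset=\inslim_F$'' situation that complicates the pseudo-convergent case), which is why the characterization takes such a symmetric and elementary form here and does not need the more delicate hypotheses present in Propositions \ref{prop:equivalence-pcv} and \ref{prop:equivalence-pdv}.
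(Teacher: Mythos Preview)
Your proof is correct and follows essentially the same approach as the paper's: reduce $V_E=V_F$ to $\inslim_E=\inslim_F$ via Proposition \ref{prop:equivalence} (using that pseudo-stationary sequences always have pseudo-limits in $K$), and then translate that equality into the stated conditions using Lemma \ref{pseudolimiti}. The paper's version is terser, phrasing the conditions as $E\subset\inslim_F$ and $F\subset\inslim_E$, but the content is the same.
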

\begin{proof}
The conditions of the statement say (using Lemma \ref{pseudolimiti}) that $E\subset\inslim_F$ and $F\subset\inslim_E$. By the same Lemma, this is equivalent to $\inslim_E=\inslim_F$, which is equivalent to $V_E=V_F$ by Proposition \ref{prop:equivalence}.
\end{proof}

\section{A generalized Fundamentalsatz}\label{generalized Fundamentalsatz}

In general, not all the extensions of $V$ to $K(X)$ can be realized via a pseudo-monotone sequence contained in $K$. For example, let $V$ be the ring of $p$-adic integers $\Z_p$, for some prime $p\in\Z$. It is not difficult to see that for $\alpha\in \overline{\Q_p}\setminus\Q_p$, the valuation domain $V_{p,\alpha}=\{\phi\in \Q_p(X)\mid \phi(\alpha)\in \overline{\Z_p}\}$ of $\Q_p(X)$, where $\overline{\Z_p}$ is the unique valuation domain of $\overline{\Q_p}$, is not of the form $V_E$, for any pseudo-monotone sequence $E\subset \Q_p$ (for example, by Proposition \ref{prop:description} and \cite[Proposition 2.2 \& Theorem 3.2]{PerTransc}, see also the proof of Theorem \ref{teor:fundamentalsatz}).

In this section, we show when all extensions of $V$ to $K(X)$ are induced by  pseudo-monotone sequences in $K$. We start with a lemma which allows us to reduce to the algebraically closed case.
\begin{lemma}\label{lemma:pmon-dentro}
Let $L$ be an extension of $K$ and $U$  a valuation domain of $L$ lying over $V$ such  that $\Gamma_u=\Gamma_v$. Let $F\subset L$ be a pseudo-monotone sequence with respect to $u$ having a pseudo-limit $\beta\in K$. Then:
\begin{enumerate}[(a)]
\item\label{lemma:pmon-dentro:strictly} if $F$ is strictly pseudo-monotone, there is a sequence $E\subset K$ of the same kind as $F$ that is equivalent to $F$ (with respect to $u$);
\item\label{lemma:pmon-dentro:pstaz} if $F$ is pseudo-stationary and the residue field of $V$ is infinite, there is a pseudo-stationary sequence $E\subset K$ that is equivalent to $F$ (with respect to $u$).
\end{enumerate}
\end{lemma}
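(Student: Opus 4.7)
The plan is as follows. Write $F=\{t_\nu\}_{\nu\in\Lambda}$ with gauge $\{\delta_\nu\}$; since $\Gamma_u=\Gamma_v$, each $\delta_\nu$ lies in $\Gamma_v$. I will construct a pseudo-monotone sequence $E\subset K$ of the same kind as $F$ whose breadth ideal coincides with that of $F$ and which still has $\beta$ as a pseudo-limit. Applying Lemma \ref{pseudolimiti} in the valued field $(L,u)$ then gives
\begin{equation*}
\inslim_E^u=\beta+\Br_u(E)=\beta+\Br_u(F)=\inslim_F^u,
\end{equation*}
and Proposition \ref{prop:equivalence} read in $(L,u)$ yields $U_E=U_F$, which is the required equivalence with respect to $u$.

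For part (a), for each $\nu\in\Lambda$ at which $\delta_\nu$ is defined, pick $c_\nu\in K$ with $v(c_\nu)=\delta_\nu$, which is possible because $\delta_\nu\in\Gamma_v$. In the pseudo-divergent case no gauge value is assigned to the minimum $\nu_0$ of $\Lambda$; set $c_{\nu_0}=0$. Define $s_\nu=\beta+c_\nu$. The ultrametric inequality applied to the differences $c_\mu-c_\nu$, whose summands have distinct $v$-values, gives $v(s_\mu-s_\nu)=\delta_\nu$ for $\nu<\mu$ in the pseudo-convergent case and $v(s_\mu-s_\nu)=\delta_\mu$ in the pseudo-divergent case; hence $E$ is of the same kind as $F$ with identical gauge, $\beta$ is a pseudo-limit of $E$ by construction, and $\Br_u(E)=\Br_u(F)$, since both breadth ideals depend only on the gauge.

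For part (b), let $\delta$ be the common value of $u(t_\nu-t_\mu)$ for $\nu\neq\mu$ and choose $c\in K$ with $v(c)=\delta$. Since $V/M$ is infinite, I can pick a family $\{r_n\}_{n\in\N}\subset V$ with pairwise distinct residues modulo $M$, so that $r_n-r_m$ is a unit of $V$ whenever $n\neq m$. Setting $s_n=\beta+cr_n$ gives $v(s_n-s_m)=v(c)+v(r_n-r_m)=\delta$ for all $n\neq m$, so $E=\{s_n\}_{n\in\N}\subset K$ is pseudo-stationary with breadth $\delta$; furthermore $v(\beta-s_n)=\delta+v(r_n)$ equals $\delta$ for every $n$ except possibly the unique index at which $r_n\in M$, so $\beta$ is a pseudo-limit of $E$. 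Then $\Br_u(E)=cU=\Br_u(F)$ and the conclusion follows as in (a).

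I expect the argument to be mostly routine; the only subtle points are bookkeeping. In the pseudo-divergent case, the minimum index $\nu_0$ of $\Lambda$ must be treated separately, which is resolved by setting $c_{\nu_0}=0$, so that $\beta$ itself belongs to $E$ (this is permitted by Lemma \ref{pseudolimiti}). In the pseudo-stationary case, one must recognize that the index set of $F$ need not be preserved, since equivalence through Proposition \ref{prop:equivalence} is an indexing-free relation and $\N$ provides a well-ordered index set of sufficient size to accommodate the construction.
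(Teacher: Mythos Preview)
Your proof is correct and follows essentially the same approach as the paper's: in both cases one translates the pseudo-limit $\beta\in K$ by elements of $K$ whose valuations match the gauge of $F$, then concludes via Proposition~\ref{prop:equivalence}. Your treatment is in fact slightly more careful in two places---you handle the minimum index in the pseudo-divergent case explicitly, and in part~(b) you use $\N$ as index set rather than $\Lambda$, which avoids any implicit cardinality assumption on the residue field---but these are matters of bookkeeping rather than substantive differences.
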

\begin{proof}
Let $F=\{t_\nu\}_{\nu\in\Lambda}$. 

\ref{lemma:pmon-dentro:strictly} For every $\nu$, there is a $c_\nu\in K$ such that $u(t_\nu-\beta)=u(c_\nu)=\delta_\nu$; let $s_\nu=c_\nu+\beta$ and let $E=\{s_\nu\}_{\nu\in\Lambda}$. Then, $E\subset K$ (since $\beta\in K$) and
\begin{equation*}
u(s_\mu-s_\nu)=u(c_\mu+\beta-c_\nu-\beta)=u(c_\mu-c_\nu)=\delta_\nu
\end{equation*}
for every $\mu>\nu$, so $E$ is pseudo-monotone of the same kind as $F$ and the gauges of $E$ and $F$ coincide; in particular, $\Br_u(E)=\Br_u(F)$. By Proposition \ref{prop:equivalence}, $E$ and $F$ are equivalent.

\ref{lemma:pmon-dentro:pstaz} Since $u(t_\nu-\beta)=\delta\in\Gamma_v$ and the residue field of $V$ is infinite, we can find an infinite set $\{c_\nu\}_{\nu\in\Lambda}\subset V$ such that $u(c_\nu-\beta)=\delta$ and such that $u(c_\nu-c_\mu)=\delta$ for every $\nu\neq \mu$. Setting $s_\nu=c_\nu+\beta$, as in the previous case we can take $E=\{s_\nu\}_{\nu\in\Lambda}$, and $E$ and $F$ are equivalent by Proposition \ref{prop:equivalence}.
\end{proof}

\begin{teor}\label{teor:fundamentalsatz}
Let $V$ be a valuation domain with quotient field $K$. Then, every extension $W$ of $V$ to $K(X)$ is of the form $W=V_E$ for some pseudo-monotone sequence $E\subset K$ if and only if $\widehat K$ is algebraically closed. In this case, we have the following.
\begin{enumerate}[(a)]
\item If $V\subset W$ is immediate, then $E$ is necessarily a pseudo-convergent sequence of transcendental type.
\item If $V\subset W$ is not immediate, then:
\begin{enumerate}[label=(\alph{enumi}\arabic*)]
\item\label{teor:fundamentalsatz:empty} if $\inslim(W)=\emptyset$, then $E$ is a pseudo-convergent Cauchy sequence of algebraic type whose limit is in $\widehat{K}\setminus K$;
\item\label{teor:fundamentalsatz:div} if $\mathcal{L}_1(W)=\alpha+I\neq\emptyset$ and $I$ is a divisorial fractional ideal, then $E$ can be taken to be pseudo-convergent of algebraic type;
\item\label{teor:fundamentalsatz:nonprinc} if $\mathcal{L}_1(W)=\alpha+I\neq\emptyset$ and $I$ is not a principal ideal, then $E$ can be taken to be pseudo-divergent;
\item\label{teor:fundamentalsatz:pstaz} if $\mathcal{L}_2(W)=\alpha+I\neq\emptyset$, then $E$ is necessarily a pseudo-stationary sequence.
\end{enumerate}
\end{enumerate}
\end{teor}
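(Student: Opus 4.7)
The proof splits into necessity and sufficiency.

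\emph{Necessity.} Suppose $\widehat K$ is not algebraically closed; pick $\alpha\in\overline{\widehat K}\setminus\widehat K$ algebraic over $K$ (by a standard density argument), let $\widetilde v$ extend $v$ to $K(\alpha)$, and consider $W = \{\phi\in K(X):\widetilde v(\phi(\alpha))\geq 0\}$. The claim is that $W\neq V_E$ for every pseudo-monotone $E\subset K$. If $W = V_E$, then by Proposition \ref{prop:description} the type of $E$ determines the structure of $V\subseteq W$: pseudo-stationary makes $W$ residually transcendental, pseudo-convergent of transcendental type makes $V\subseteq W$ immediate, and strictly pseudo-monotone of algebraic type or pseudo-divergent forces $\Gamma_{v_E} = \Gamma_v\oplus\Delta_E\Z$ determined by a pseudo-limit of $E$ lying in $\widehat K$ or $\overline K$. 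Passing to a common extension $\widehat W$ of $W$ and $\widehat V$ on $\widehat K(X)$ via Proposition \ref{prop:extensionsK(X)} and comparing with $V_{v,\alpha}$ forces in each case $\alpha\in\widehat K$, a contradiction.

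\emph{Sufficiency.} Assume $\widehat K$ is algebraically closed; then $\Gamma_v$ is divisible and $V/M$ is algebraically closed, in particular infinite, so pseudo-stationary sequences of every admissible breadth exist in $K$. Fix $W$ extending $V$ to $K(X)$ and let $\widehat W$ be any common extension of $W$ and $\widehat V$ to $\widehat K(X)$. I argue by cases on $\inslim(W)$. For case (b4), take $\alpha\in\inslim_2(W)$ and $\delta=w(X-\alpha)$; by the characterization of residually transcendental extensions recalled in \S\ref{monomial}, $W = V_{\alpha,\delta}$, and any pseudo-stationary $E\subset K$ with $\alpha\in\inslim_E$ and breadth $\delta$ satisfies $V_E = V_{\alpha,\delta}$ by Proposition \ref{prop:description}\ref{prop:description:monomiali}. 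For cases (b2) and (b3), since $w(X-\alpha)\notin\Gamma_v$ one checks $I\neq cM$, so Lemma \ref{lemma:strictlydiv} gives $I$ strictly divisorial or non-principal; Lemma \ref{lemma:carattbreadth} then produces a pseudo-convergent (resp.\ pseudo-divergent) $E\subset K$ with $\inslim_E = \alpha+I$. For case (b1), $V\subset W$ not immediate implies the same for $\widehat V\subset\widehat W$, so $\inslim(\widehat W)\neq\emptyset$ by Proposition \ref{prop:pseudolim}\ref{prop:pseudolim:imm}; density of $K$ in $\widehat K$ together with $\Gamma_v=\Gamma_{\widehat v}$ forces $\inslim_1(\widehat W) = \{\widehat\alpha\}$ for a unique $\widehat\alpha\in\widehat K\setminus K$, and the $K$-isomorphism $X\mapsto\widehat\alpha$ would make $V\subset W$ immediate if $\widehat\alpha$ were transcendental over $K$, hence $\widehat\alpha\in\overline K\cap\widehat K$ and any Cauchy sequence $E\subset K$ converging to $\widehat\alpha$ works. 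For case (a), I follow Kaplansky's classical iteration: given $s_\nu$ with $w(X-s_\nu) = v(c_\nu)$, use immediacy to find $d_\nu\in V$ with $(X-s_\nu)/c_\nu\equiv d_\nu\pmod{M_W}$, and set $s_{\nu+1} = s_\nu + c_\nu d_\nu$; the resulting $E$ is pseudo-convergent of transcendental type since $X$ is a transcendental pseudo-limit with respect to $w$.

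\emph{Verifying $V_E=W$ and uniqueness.} In each case I verify $V_E = W$ via the algebraic closure $\widehat K$: by construction $\inslim_i(\widehat W)$ coincides with $\inslim_{\widehat E}^{\widehat v} = \inslim_i(\widehat V_E)$ (using Proposition \ref{prop:limVE}), so Proposition \ref{prop:uglim} applied in the algebraically closed field $\widehat K$ yields $\widehat W = \widehat V_E$; intersecting with $K(X)$ gives $W = V_E$. The ``necessarily'' clauses of the classification follow from Proposition \ref{prop:limVE} together with Proposition \ref{prop:description}: for instance, in case (b4) any $E$ with $V_E = W$ must have nonempty $\inslim_2(V_E)$ and therefore be pseudo-stationary, while in case (b1) any type other than pseudo-convergent Cauchy of algebraic type would either produce a pseudo-limit in $K$ or make $V\subset V_E$ immediate. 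The main obstacle is the density argument in case (b1) forcing $\inslim_1(\widehat W)$ to be a singleton in $\overline K\cap\widehat K$, together with the cofinality control in the iteration of case (a); both rely crucially on $\Gamma_v$ being divisible and $V/M$ algebraically closed.
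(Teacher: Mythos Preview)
Your overall strategy matches the paper's: lift to the algebraically closed completion $\widehat{K}$, identify $\widehat{W}$ via Proposition \ref{prop:uglim}, and descend. But there are two genuine gaps.

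\textbf{The claim $I\neq cM$ in (b2)/(b3) is false.} Take $K$ algebraically closed with $\Gamma_v=\insQ$, and let $E\subset K$ be a pseudo-divergent sequence with pseudo-limit $0$ and gauge $\delta_\nu\searrow 0$, so that $\Br(E)=M$. Then $W=V_E$ has $\alpha=0\in\inslim_1(W)$ and $w(X)=v_E(X)$ is a positive infinitesimal in $\Gamma_{v_E}$ (so $w(X)\notin\Gamma_v$), yet $I=\{x\in K:v(x)>w(X)\}=\{x:v(x)>0\}=M=1\cdot M$. Here $I$ is \emph{not} strictly divisorial, and by Lemma \ref{lemma:carattbreadth}(a) no pseudo-convergent sequence in $K$ has $\alpha+I$ as its pseudo-limit set; only a pseudo-divergent one works. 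This is exactly why the theorem puts separate hypotheses on $I$ in (b2) and (b3): case (b2) genuinely requires $I$ divisorial and does not cover every situation with $\inslim_1(W)\neq\emptyset$. You should follow that split directly (divisorial $\Rightarrow$ pseudo-convergent via Lemma \ref{lemma:carattbreadth}(a); non-principal $\Rightarrow$ pseudo-divergent via Lemma \ref{lemma:carattbreadth}(b); together these cover everything since $M$ is not principal) rather than asserting $I\neq cM$.

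\textbf{The necessity argument is incomplete.} Your case analysis does not cleanly exclude the possibility that $E$ is pseudo-convergent of algebraic type with $\alpha$ among its pseudo-limits in $\overline{K}$; this can occur, for instance, when $K(\alpha)/K$ is an immediate extension, in which case your appeal to the shape of $\Gamma_{v_E}$ does not immediately yield a contradiction. The paper avoids any case split: writing $W=U_F\cap K(X)$ for a Cauchy sequence $F\subset\overline{K}$ with limit $\alpha$, if $W=V_E$ then Proposition \ref{prop:extensionsK(X)} forces $U_E=U_F$, whence $\inslim_E^u=\inslim(U_E)=\inslim(U_F)=\{\alpha\}$ by Proposition \ref{prop:limVE}. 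A pseudo-monotone sequence whose pseudo-limit set in $\overline{K}$ is a single point has zero breadth ideal and is therefore a Cauchy sequence; thus $E\subset K$ would converge to $\alpha$, contradicting $\alpha\notin\widehat{K}$.

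A minor point: in (b1), your argument that $\widehat{\alpha}$ must be algebraic over $K$ is a detour. Once $V_E=W$ is established for the Cauchy sequence $E$, Proposition \ref{prop:description}\ref{prop:description:trasc} gives that $E$ is of algebraic type directly from the non-immediacy of $V\subset W$.
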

Note that, since every nondivisorial ideal is nonprincipal, cases \ref{teor:fundamentalsatz:div} and \ref{teor:fundamentalsatz:nonprinc} cover all possibilities. Furthermore, these two cases are not mutually exclusive: see Remark \ref{equality pcv and pdv}.
\begin{proof}
Throughout the proof we will use the fact that $\widehat K$ is algebraically closed if and only if $\overline K$ embeds in $\widehat K$ (which in turn follows from the fact that the completion of an algebraically closed field is algebraically closed \cite[\S 15.3, Theorem 1]{shell}). Loosely speaking, this condition holds if and only if $K$ is dense in its algebraic closure $\overline{K}$.

Suppose that $\widehat{K}$ is not algebraically closed. Then by above there exists $\alpha\in\overline{K}$ such that $K(\alpha)$ cannot be embedded into $\widehat{K}$, that is, $\alpha$ is not the limit of any Cauchy sequence in $K$. Let $U$ be an extension of $V$ to $\overline{K}$ and let $F\subset\overline{K}$ be a pseudo-convergent Cauchy sequence with limit $\alpha$. Let $W=U_F\cap K(X)$: we claim that $W\neq V_E$ for any pseudo-monotone sequence $E$. Indeed, if $W=V_E$ for some pseudo-monotone sequence $E\subset K$, by Proposition \ref{prop:extensionsK(X)} $U_E$ is the only common extension of $U$ and $V_E$ to $\overline{K}(X)$, so that $U_E=U_F$. By Proposition \ref{prop:equivalence}, we must have $\inslim_E^u=\inslim_F^u=\{\alpha\}$ and $\Br_u(E)=\Br_u(F)=(0)$ and thus $\inslim_E=\inslim_E^u\cap K=\emptyset$; hence, $E\subset K$ should be a pseudo-convergent Cauchy  sequence with limit $\alpha$ (Lemma \ref{pseudolimiti}).  However, this is impossible by the choice of $\alpha$, and so $W\neq V_E$ for any pseudo-monotone sequence $E$. 

\medskip

Suppose now that $\widehat{K}$ is algebraically closed, and let $\mathcal{W}$ be a common extension of $\widehat{V}$ and $W$ to $\widehat{K}(X)$.

If $\widehat{V}\subset \mathcal{W}$ is immediate, then also $V\subset W$ is immediate (since $V\subset\widehat{V}$ is); by Kaplansky's Theorem  \cite[Theorem 2]{kaplansky-maxfield}, there is a pseudo-convergent sequence $E\subset K$ such that $W=V_E$.

Suppose $\widehat{V}\subset\mathcal{W}$ is not immediate. By Proposition \ref{prop:pseudolim}\ref{prop:pseudolim:imm}, $\inslim(\mathcal{W})\subseteq\widehat{K}$ is nonempty, say equal to $\alpha+J$ for some $\alpha\in\widehat{K}$ and some $J$ that is either a fractional ideal of $\widehat{V}$ or the whole $\widehat{K}$.

If $J=(0)$ let $E\subset K$ be a pseudo-convergent Cauchy sequence having limit $\alpha$: then, $\inslim(\widehat{V}_E)=\inslim_1(\widehat{V}_E)=\{\alpha\}=\inslim_1(\mathcal{W})=\inslim(\mathcal{W})$, and by Proposition \ref{prop:uglim} it follows that $\mathcal{W}=\widehat{V}_E$. Hence, $W=\mathcal{W}\cap K=\widehat{V}_E\cap K=V_E$. In particular, if $\alpha\in K$ then $\inslim(W)=\{\alpha\}$, while if $\alpha\in\widehat{K}\setminus K$ then $\inslim(W)=\emptyset$; furthermore, by Proposition \ref{prop:description} if $V\subset W$ is not immediate, then $E$ must be a sequence of algebraic type. 

Suppose now that $J\neq(0)$. Then, the open set $\alpha+J$ must contain an element $\beta$ of $K$, and in particular $\alpha+J=\beta+J$. Using Lemma \ref{lemma:carattbreadth}, we construct a pseudo-monotone sequence $F\subset \widehat K$ with breadth ideal $J$ and with $\beta$ as pseudo-limit, with the following properties:
\begin{itemize}
\item if $\inslim_1(\mathcal{W})\neq\emptyset$ and $J$ is a strictly divisorial fractional ideal, we take $F$ to be a pseudo-convergent sequence;
\item if $\inslim_1(\mathcal{W})\neq\emptyset$ and $J$ is a nondivisorial fractional ideal, we take $F$ to be a pseudo-divergent sequence (note that, in this case, $J=c\widehat{M}$ is not principal);
\item if $\inslim_1(\mathcal{W})=\widehat{K}$, we take $F$ to be a pseudo-divergent sequence whose gauge is coinitial in $\Gamma_v$;
\item if $\inslim_2(\mathcal{W})\neq\emptyset$, we take $F$ to be a pseudo-stationary sequence.
\end{itemize}
Note that the first case falls in \ref{teor:fundamentalsatz:div}, the second and the third ones in \ref{teor:fundamentalsatz:nonprinc} and the fourth one in \ref{teor:fundamentalsatz:pstaz}.

In all cases, $\mathcal{W}=\widehat{V}_F$ by Proposition \ref{prop:uglim} (in the first three cases using $\inslim_1$ and in the last one using $\inslim_2$). Since $V\subset\widehat{V}$ is immediate, we can apply Lemma \ref{lemma:pmon-dentro} to find a pseudo-monotone sequence $E\subset K$ that is equivalent to $F$; hence, $V_E=\widehat{V}_F\cap K=\mathcal{W}\cap K=W$. The theorem is now proved.
\end{proof}

\begin{oss}\label{oss su Ostrowski}
By Proposition \ref{prop:description} and the main Theorem \ref{teor:fundamentalsatz}, if $\widehat K$ is algebraically closed, then every extension of $V$ to $K(X)$ which is not immediate is a monomial valuation. This result was already known to hold but only with the stronger assumption that $K$ is algebraically closed, see \cite[pp. 286-289]{PopAllvaluations}. 

We remark that a more direct approach to the proof of Theorem \ref{teor:fundamentalsatz} can be given by considering the set $w(X,K)=\{w(X-a) \mid a\in K\}$, which is a subset of $\Gamma_w$. If $w(X,K)$ has no maximum, then, exactly as in the original proof of Ostrowski, we can extract from $w(X,K)$ a cofinal sequence which determines a pseudo-convergent sequence $E$ in $K$ of transcendental type such that $W=V_E$. If instead $w(X,K)$ has a maximum $\Delta_w=w(X-a_0)$, then, following again Ostrowski's proof, one can show that $W$ is a monomial valuation of the form $V_{a_0,\Delta_w}$: according to whether $\Delta_w$ is in $\Gamma_v$ or not (and, in the latter case, depending on the properties of the cut induced by $\Delta_w$ on $\Gamma_v$), we can find a pseudo-monotone sequence $E\subset K$ with $a_0$ as pseudo-limit and such that $W=V_E$. This approach can be connected to the one given above by noting that $\Gamma_v\setminus w(X,K)=v(J)$ (where $J$ is the ideal defined in the proof of Theorem \ref{teor:fundamentalsatz}), and that if $\Delta_w$ exists then we have $\{a\in K \mid w(X-a)=\Delta_w\}=\mathcal{L}(W)$.

When $w(X,K)$ has a maximum and $V$ has rank 1, Ostrowski proved in his Fundamentalsatz \cite[p. 379]{Ostrowski} that the rank one valuation associated to $W$ can be realized through a pseudo-convergent sequence $F=\{s_\nu\}_{\nu\in\Lambda}\subset K$ by means of the map defined as $w_F(\phi)=\lim_{\nu\to\infty} v(\phi(s_\nu))$, for each $\phi\in K(X)$ (where the limit is taken in $\insR$). If $W=V_E$, where $E\subset L$ is a pseudo-stationary sequence, as in Theorem \ref{teor:fundamentalsatz}\ref{teor:fundamentalsatz:pstaz}, then $E$ and $F$ have the same set of pseudo-limits, and in particular they have the same breadth. Furthermore, by Proposition \ref{prop:subext-VF} below, in this case we have $V_F\subset W=V_E$. See also \cite{PS} for other results regarding the valuation $w_F$ introduced by Ostrowski. 
\end{oss}

An immediate corollary of Theorem \ref{teor:fundamentalsatz} is that, for any field $K$, if $U$ is an extension of $V$ to $\overline{K}$, then every extension $W$ of $V$ to $K(X)$ can be written as the contraction of $U_E$ to $K(X)$, namely $U\cap K(X)$, where $E\subset\overline{K}$ is a pseudo-monotone sequence with respect to $U$; furthermore, in view of the examples above, we cannot always choose $E$ to be contained in $K$.

\begin{oss}
The hypothesis that $\widehat{K}$ is algebraically closed is weaker than the hypothesis that $K$ is algebraically closed; we give a few examples.
~\begin{enumerate}
\item  Let $K=\overline{\insQ}\cap\insR$, where $\overline{\insQ}$ is the algebraic closure of $\insQ$, and let $V$ be an extension to $K$ of $\insZ_{(5\insZ)}$. Then, $i$ belongs to the completion $\widehat{K}$, since the polynomial $X^2+1$ has a root in $\insZ/5\insZ$; therefore, $K(i)=\overline{\insQ}$ can be embedded into $\widehat{K}$, so $\widehat{K}$ is algebraically closed while $K$ is not. 

\item If $K$ is separably closed, then $\widehat{K}$ is algebraically closed (it is enough to adapt the proof of \cite[Chapter 2, (N)]{ribenboim} to the general case).

\item Suppose that $V$ has rank $1$ and that the residue field $k$ has characteristic $0$. Then, $\widehat{K}$ is algebraically closed if and only if $k$ is algebraically closed and the value group $\Gamma_v$ is divisible. Indeed, these two conditions are necessary, since completion preserves value group and residue field. Conversely, suppose that the two conditions hold. When $V$ has rank 1 then $\widehat{K}$ is henselian, i.e. $\widehat{v}$ has a unique extension to the algebraic closure of $\widehat{K}$. Since $k$ has characteristic $0$, all finite extensions of $\overline{K}$ are defectless \cite[Corollary 20.23]{endler}, and thus the fundamental inequality is an equality and thus the degree $[\overline{\widehat{K}}:\widehat{K}]=1$, i.e., $\widehat{K}$ is algebraically closed.
\end{enumerate}
\end{oss}

\section{Geometrical interpretation}\label{geometrical}

Throughout this section, we suppose that the maximal ideal $M$ of $V$ is not finitely generated, and that its residue field $k$ is infinite. We also fix  $\alpha\in K$. For any $\delta\in\Gamma_v$, we denote $B(\alpha,\delta)=\{x\in K\mid v(\alpha-x)\geq\delta\}$ and $\mathring{B}(\alpha,\delta)=\{x\in K\mid v(\alpha-x)>\delta\}$ the closed and open ball (respectively) of center $\alpha$ and radius $\delta$.

By Lemma \ref{lemma:carattbreadth}, we can find both a pseudo-convergent sequence $E$ and a pseudo-stationary sequence $F$ such that $\inslim_E=\inslim_F=\alpha+cV=B(\alpha,\delta)$, where $\delta=v(c)$, for some $c\in K$; furthermore, again by Lemma \ref{lemma:carattbreadth}, for every $z\in\alpha+cV$ we can find a pseudo-divergent sequence $D_z$ such that $\inslim_{D_z}=z+cM=\mathring{B}(z,\delta)$. Note that by Lemma \ref{pseudolimiti}  $D_z\subset \mathring{B}(z,\delta)$. In geometrical terms, $E$ and $F$ are associated to the closed ball $B(\alpha,\delta)$, while each $D_z$ is associated to the open ball $\mathring{B}(z,\delta)$, which is contained in $B(\alpha,\delta)$ and has the same radius. In the next proposition we show the containments among the valuation domains associated to these sequences.
\begin{prop}\label{prop:subext-VF}
Preserve the notation above, and let $z,w\in B(\alpha,\delta)$. Then, $V_F$ properly contains both $V_E$ and $V_{D_z}$, $V_E\neq V_{D_z}$, and $V_{D_z}=V_{D_w}$ if and only if $z-w\in cM$.
\end{prop}
\begin{proof}
Let $U$ be an extension of $V$ to $\overline{K}$ and let $z\in B(\alpha,\delta)$: then
$$\inslim^u_{D_z}=z+cM_U\subsetneq\inslim^u_E=\inslim_F^u=\alpha+cU$$
Let $\phi\in K(X)$ and, for any sequence $G$, let $\lambda_G$ be the dominating degree of $\phi$ with respect to $G$.

Since $\inslim^u_E=\inslim_F^u$, we have $\lambda_E=\lambda_F$; if $E=\{s_\nu\}_{\nu\in\Lambda}$ and $\{\delta_\nu\}_{\nu\in\Lambda}$ is the gauge of $E$, by Proposition \ref{values rat func on psm seq} for large $\nu$ we have $v(\phi(s_\nu))=\lambda_E\delta_\nu+\gamma$, where $\gamma=u\left(\frac{\phi}{\phi_S}(\alpha)\right)$. If $\phi\in V_E$, then $v(\phi(s_\nu))\geq 0$ for all $\nu$ sufficiently large; since $\delta_\nu\nearrow\delta$, it follows that $\lambda_E\delta+\gamma\geq 0$. However, if $F=\{t_\nu\}_{\nu\in\Lambda}$, then applying again Proposition \ref{values rat func on psm seq} we have $v(\phi(t_\nu))=\lambda_F\delta+\gamma=\lambda_E\delta+\gamma$, where $\gamma$ is the same as the previous case; it follows that $v(\phi(t_\nu))\geq 0$ for large $\nu$, i.e., $\phi\in V_F$. 

Fix now $z\in B(\alpha,\delta)$ and let $D_z=\{r_\nu\}_{\nu\in\Lambda}$. Let $\{\delta_\nu'\}_{\nu\in\Lambda}$ be the gauge of $D_z$; by mimicking the proof of Proposition \ref{values rat func on psm seq}, we have
\begin{equation*}
\phi(X)=d\prod_{\alpha\in \inslim^u_{D_z}\cap S}(X-\alpha)^{\epsilon_{\alpha}}\prod_{\beta\in (\inslim^u_F\setminus\inslim^u_{D_z})\cap S}(X-\beta)^{\epsilon_{\beta}}\prod_{\gamma\notin \inslim^u_F\cap S}(X-\gamma)^{\epsilon_{\gamma}},
\end{equation*}
for some $d\in K$, where $S$ is the multiset of critical points of $\phi$. Hence, for large $\nu$, $v(\phi(r_\nu))=\lambda_{D_z}\delta_\nu'+(\lambda_F-\lambda_{D_z})\delta+\gamma$. As in the previous case, if $\phi\in V_{D_z}$ then $v(\phi(r_\nu))\geq 0$ for large $\nu$, and so $0\leq \lambda_F\delta+\gamma=v(\phi(t_\nu))$, i.e., $\phi\in V_F$.

Thus $V_E$ and the $V_{D_z}$ are contained in $V_F$; the containment is strict by Proposition \ref{prop:description}, since $V_F$ is residually transcendental over $V$ while the others are not. The last two claims follow from Lemma \ref{pseudolimiti} and Proposition \ref{prop:equivalence} by comparing the set of the pseudo-limits of the sequences involved.
\end{proof}

Consider now the quotient map $\pi:V_F\longrightarrow V_F/M_F$. By Proposition \ref{prop:description}\ref{prop:description:pstaz}, $V_F/M_F\simeq k(t)$, where $t$ is the image of $\frac{X-\alpha}{c}$. Let $W$ be either $V_E$ or $V_{D_z}$ for some $z\in B(\alpha,\delta)$: then, $M_F\subset W$, and thus we can consider the quotient $\pi(W)=W/M_F$, obtaining the following commutative diagram:
\begin{equation}\label{eq:subext}
\xymatrix{
V\ar@{^{(}->}[r]\ar@{>>}[d]_{\pi} & W\ar@{^{(}->}[r]\ar@{>>}[d]_{\pi} & V_F\ar@{>>}[d]_{\pi}\\
V/M=k\ar@{^{(}->}[r]& W/M_{F}\ar@{^{(}->}[r]& k(t).
}
\end{equation}
In particular, $W/M_F$ is a (proper) valuation domain of $k(t)$ containing $k$: hence, by \cite[Chapter 1, \S 3]{Chevalley}, $W/M_F$ must be equal either to $k[t]_{(f(t))}$, for some irreducible polynomial $f\in k[t]$, or to $k[1/t]_{(1/t)}$. In particular, $\pi$ induces a one-to-one correspondence between the valuation domains of $k(t)$ containing $k$ and the valuation domains of $K(X)$ contained in $V_F$. The strictly pseudo-monotone sequences we considered above are exactly the linear case, as we show next.
\begin{prop}\label{prop:linearquoz}
Preserve the notation above. Then:
\begin{enumerate}[(a)]
\item $\pi(V_E)=k[1/t]_{(1/t)}$;
\item $\pi(V_{D_z})=k[t]_{(t-\theta(z))}$, where $\theta(z)=\pi\left(\frac{z-\alpha}{c}\right)$;
\item $\pi^{-1}(k[t]_{(t-x)})=V_{D_{\alpha+yc}}$, where $y$ is an element of $V$ satisfying $\pi(y)=x$.
\end{enumerate}
\end{prop}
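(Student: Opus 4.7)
The plan is to use the bijection stated just before the proposition: $\pi$ identifies the valuation domains of $k(t)$ containing $k$ with the valuation domains of $K(X)$ contained in $V_F$. Since every proper valuation ring of $k(t)$ over $k$ is either $k[t]_{(f)}$ for an irreducible $f\in k[t]$ or $k[1/t]_{(1/t)}$, it suffices in parts (a) and (b) to exhibit a single distinguished element lying in the maximal ideal of $\pi(W)$: this pins down the valuation ring uniquely.

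For part (a), take $\phi(X)=c/(X-\alpha)$. Since $\alpha\in\mathcal{L}_E$, Proposition \ref{values rat func on psm seq} yields $v(\phi(s_\nu))=\delta-\delta_\nu$ for large $\nu$. Because the gauge $\{\delta_\nu\}$ of $E$ is strictly increasing to the breadth $\delta=v(c)$, these values are all strictly positive, so $\phi\in M_{V_E}$. On the other hand $v_F(X-\alpha)=\delta$ (as $\alpha\in\mathcal{L}_F$), so $\phi$ is a unit of $V_F$ and $\pi(\phi)=1/t$. Hence $1/t\in M_{\pi(V_E)}$. A direct check on the list of valuation rings of $k(t)$ over $k$ shows that only $k[1/t]_{(1/t)}$ contains $1/t$ in its maximal ideal (for any irreducible $f\in k[t]$ one has $v_{(f)}(1/t)\le 0$), whence $\pi(V_E)=k[1/t]_{(1/t)}$.

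For part (b), set $w=(z-\alpha)/c$, which lies in $V$ since $z\in B(\alpha,\delta)$; by definition $\theta(z)=\pi(w)\in k$. The identity
\begin{equation*}
\frac{X-z}{c}=\frac{X-\alpha}{c}-w
\end{equation*}
yields $\pi((X-z)/c)=t-\theta(z)$. Because $z\in\mathcal{L}_{D_z}$, Proposition \ref{values rat func on psm seq} gives $v((r_\nu-z)/c)=\delta'_\nu-\delta$, and the gauge $\{\delta'_\nu\}$ of $D_z$ decreases strictly to $\delta$ (since $\Br(D_z)=cM$), so all these values are strictly positive and $(X-z)/c\in M_{V_{D_z}}$. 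Thus $t-\theta(z)\in M_{\pi(V_{D_z})}$; since among the valuation rings of $k(t)$ over $k$ only $k[t]_{(t-\theta(z))}$ contains the irreducible linear form $t-\theta(z)$ in its maximal ideal (note $v_{(1/t)}(t-\theta(z))=-1$), we conclude $\pi(V_{D_z})=k[t]_{(t-\theta(z))}$.

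Part (c) then follows at once from (b) together with the bijection: for $x\in k$, choose $y\in V$ with $\pi(y)=x$; then $z=\alpha+yc$ satisfies $\theta(z)=\pi(y)=x$, so by (b) $\pi(V_{D_{\alpha+yc}})=k[t]_{(t-x)}$ and hence $\pi^{-1}(k[t]_{(t-x)})=V_{D_{\alpha+yc}}$. The only genuine subtlety in the argument is verifying the strict inequalities between the gauges of $E$ and $D_z$ and the common breadth $\delta$; this is precisely what makes $1/t$ and $t-\theta(z)$ land in the respective maximal ideals rather than remaining units, and thus what encodes geometrically the passage from the ``boundary'' valuation $V_F$ to its refinements $V_E$ and $V_{D_z}$.
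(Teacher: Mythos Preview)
Your proof is correct and follows essentially the same approach as the paper: in each case you identify a distinguished element ($1/t$, respectively $t-\theta(z)$) lying in the maximal ideal of $\pi(W)$, which pins down the valuation ring of $k(t)$ uniquely. The paper argues identically, only writing part (a) in the equivalent form $v((s_\nu-\alpha)/c)=\delta_\nu-\delta<0$ to conclude $1/\phi\in M_E$, whereas you compute directly with the reciprocal; your added remarks verifying that no other valuation ring of $k(t)$ over $k$ can contain these elements in its maximal ideal are a harmless explicitation of what the paper takes for granted.
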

\begin{proof}
Let $\phi(X)=\frac{X-\alpha}{c}$: then, as in the previous discussion, $t=\pi(\phi)$. The ring $k[1/t]_{(1/t)}$ is the only valuation domain of $k(t)$ containing $k$ such that $1/t$ belongs to the maximal ideal: hence, in order to show that $\pi(V_E)=k[1/t]_{(1/t)}$ we only need to show that $1/\phi\in M_E$. This follows immediately from the fact that $v(\phi(s_\nu))=\delta_\nu-\delta<0$, where $E=\{s_\nu\}_{\nu\in\Lambda}$ and $\{\delta_\nu\}_{\nu\in\Lambda}$ is the gauge of $E$.

Analogously, in order to show that $\pi(V_{D_z})=k[t]_{(t-\theta(z))}$, we need to show that $t-\theta(z)$ is in the maximal ideal of $\pi(V_{D_z})$ or, equivalently, that
\begin{equation*}
\phi(X)-\frac{z-\alpha}{c}=\frac{X-z}{c}\in M_{D_z}.
\end{equation*}
This is an immediate consequence of the definition of $z$ and $c$, and the claim is proved.

The last point follows by the fact that $\theta(\alpha+yc)=\pi\left(\frac{\alpha+yc-\alpha}{c}\right)=\pi(y)=x$.
\end{proof}

If $k$ is algebraically closed (in particular, if $K$ is algebraically closed), then all irreducible polynomials of $k[t]$ are linear; thus, Proposition \ref{prop:linearquoz} describes all the subextensions of $V_F$. When $k$ is not algebraically closed, on the other hand, it follows that some of the valuation rings of $k(t)$ containing $k$  cannot be obtained by pseudo-divergent sequences contained in $K$ in the same way as in Proposition \ref{prop:linearquoz}; however, we can construct them by using pseudo-divergent sequences in $\overline{K}$ with respect to a fixed extension of $V$.

Given an extension $u$ of $v$ to $\overline{K}$ we denote by $\mathcal D(U)$ the \emph{decomposition group} of $U$ in $\Gal(\overline{K}/K)$, that is, $\mathcal D(U)=\{\sigma\in \Gal(\overline{K}/K) \mid \sigma(U)=U\}$.
\begin{prop}\label{prop:subext-nonac}
Let $W$ be an extension of $V$ to $K(X)$ which is properly contained in $V_F$, and suppose that $\pi(W)=k[t]_{(f(t))}$ for some nonlinear irreducible $f\in k[t]$. Let $u$ be an extension of $v$ to $\overline{K}$.
\begin{enumerate}[(a)]
\item There exists $z\in\mathcal{L}_F^u$ such that $W=U_{D_z}\cap K(X)$, where $D_z\subset \mathring B_u(z,\delta)\subset \overline K$ is pseudo-divergent.
\end{enumerate}
Let $\overline{\pi}:U_F\to \overline{k}(t)$ be the canonical residue map.
\begin{enumerate}[(a),resume]
\item\label{prop:subext-nonac:ovtheta} $\overline{\theta}(z)=\overline{\pi}\left(\frac{z-\alpha}{c}\right)$ is a zero of $f(t)$.
\item\label{prop:subext-nonac:equiv} Let $z,w\in\mathcal L_F^u$. Then the following are equivalent:
\begin{enumerate}[(i)]
\item\label{prop:subext-nonac:equiv:Udz} $U_{D_z}\cap K(X)=U_{D_w}\cap K(X)$;
\item\label{prop:subext-nonac:equiv:ovtheta} $\overline{\theta}(z)$ and $\overline{\theta}(w)$ are conjugate over $k$;
\item\label{prop:subext-nonac:equiv:decomp} $\rho(z)-w\in cM_U$ for some $\rho\in\mathcal D(U)$.
\end{enumerate}
In particular, the number of extensions of $W$ to $\overline{K}(X)$ is equal to the number of distinct roots of $f$ in $\overline{k}$.
\end{enumerate}
\end{prop}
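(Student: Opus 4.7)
For parts (a) and (b) I would reduce the existence of $z$ to a lifting problem. Since $f$ is irreducible over $k$ with a root $\theta \in \overline{k}$, and the quotient $U \twoheadrightarrow U/M_U = \overline{k}$ is surjective, pick $y \in U$ with $\overline{y} = \theta$ and set $z := \alpha + cy$, which lies in $\mathcal{L}_F^u = \alpha + cU$. By Lemma \ref{lemma:carattbreadth} (applied to $U$), there is a pseudo-divergent sequence $D_z \subset \mathring{B}_u(z,\delta) \subset \overline{K}$ admitting $z$ as a pseudo-limit. Now apply the $(\overline{K},U)$-analog of Propositions \ref{prop:subext-VF} and \ref{prop:linearquoz}: $U_{D_z} \subsetneq U_F$ and $\overline{\pi}(U_{D_z}) = \overline{k}[t]_{(t-\theta)}$, where $\overline{\pi}: U_F \twoheadrightarrow \overline{k}(t)$ is the residue map (whose restriction to $V_F$ is $\pi$). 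A reduced fraction $g/h \in k(t)$ lies in $\overline{k}[t]_{(t-\theta)}$ iff $h(\theta) \neq 0$; since $f$ is the minimal polynomial of $\theta$ over $k$, this is equivalent to $f \nmid h$, i.e., to $g/h \in k[t]_{(f(t))}$. Hence $\pi(U_{D_z} \cap K(X)) = \overline{k}[t]_{(t-\theta)} \cap k(t) = k[t]_{(f(t))} = \pi(W)$, and the bijective correspondence in Proposition \ref{prop:linearquoz} forces $U_{D_z} \cap K(X) = W$. Part (b) is then immediate: $\overline{\theta}(z) = \overline{\pi}((z-\alpha)/c) = \overline{y} = \theta$ is a root of $f$.

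For (c) I would prove (iii) $\Rightarrow$ (i), (i) $\Rightarrow$ (iii), and (ii) $\Leftrightarrow$ (iii) separately. For (iii) $\Rightarrow$ (i): given $\rho \in \mathcal{D}(U)$ with $\rho(z) - w \in cM_U$, the sequence $\rho(D_z)$ is pseudo-divergent (because $\rho$ preserves $u$) and has the same set of pseudo-limits as $D_{\rho(z)}$, so by Proposition \ref{prop:equivalence} one has $U_{\rho(D_z)} = U_{D_{\rho(z)}} = U_{D_w}$; since $\rho$ fixes $K(X)$ pointwise, $\rho(U_{D_z}) \cap K(X) = U_{D_z} \cap K(X)$, and $\rho(U_{D_z}) = U_{\rho(D_z)}$, which chained together yield (i). For (i) $\Rightarrow$ (iii): the extensions $U_{D_z}$ and $U_{D_w}$ of the same $W$ are conjugate by some $\sigma \in \Gal(\overline{K}/K)$ (see \cite[Chapt. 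VI, \S 8, 6., Corollaire 1]{bourbakichap57}); since both lie over $U$ we have $\sigma \in \mathcal{D}(U)$, and $U_{D_{\sigma(z)}} = \sigma(U_{D_z}) = U_{D_w}$ combined with Proposition \ref{prop:subext-VF} gives $\sigma(z) - w \in cM_U$. For (ii) $\Leftrightarrow$ (iii) I invoke the classical surjection $\mathcal{D}(U) \twoheadrightarrow \Gal(\overline{k}/k)$: $\overline{\theta}(z)$ and $\overline{\theta}(w)$ are conjugate over $k$ iff there exists $\rho \in \mathcal{D}(U)$ with $\rho|_{\overline{k}}(\overline{\theta}(z)) = \overline{\theta}(w)$, and the identity $\rho|_{\overline{k}}(\overline{\theta}(z)) = \overline{(\rho(z)-\alpha)/c} = \overline{\theta}(\rho(z))$ translates this to $\rho(z) - w \in cM_U$.

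For the final count, the map $\mathcal{L}_F^u / cM_U \to \overline{k}$, $z \mapsto \overline{\theta}(z)$, is a bijection; by (a) the extensions of $W$ to $\overline{K}(X)$ lying over $U$ correspond to those $z$ modulo $cM_U$ with $\overline{\theta}(z)$ a root of $f$, and different such $z$'s give distinct $U_{D_z}$ by Proposition \ref{prop:subext-VF}; hence their number is exactly the number of distinct roots of $f$ in $\overline{k}$. I expect the main technical obstacle to be the residue-level intersection computation $\overline{k}[t]_{(t-\theta)} \cap k(t) = k[t]_{(f(t))}$, which essentially replays the whole proposition one level down in $k(t)$ versus $\overline{k}(t)$. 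A secondary subtlety is justifying that the $(\overline{K},U)$-analogs of Propositions \ref{prop:subext-VF} and \ref{prop:linearquoz} hold; this transfers verbatim since, by Proposition \ref{prop:ext-pmon}, a pseudo-stationary sequence in $K$ remains pseudo-stationary over $U$ with breadth ideal generated by the same element.
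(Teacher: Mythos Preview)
Your proof is correct, but it takes a different route from the paper in two places. For part~(a) the paper argues \emph{top-down}: it fixes an extension $\mathcal{W}$ of $W$ to $\overline{K}(X)$ (lying over $U$), and then applies the $\overline{K}$-version of Proposition~\ref{prop:linearquoz} to recognise $\mathcal{W}$ as $U_{D_z}$ for some $z$, whence $W=U_{D_z}\cap K(X)$. You instead go \emph{bottom-up}: you lift a root $\theta$ of $f$ to $z=\alpha+cy\in\mathcal{L}_F^u$, build $D_z$ directly, and then verify $U_{D_z}\cap K(X)=W$ via the residue-level identity $\overline{k}[t]_{(t-\theta)}\cap k(t)=k[t]_{(f(t))}$ together with the bijection between sub-extensions of $V_F$ and valuation rings of $k(t)$ over~$k$ (note: that bijection is stated in the text preceding Proposition~\ref{prop:linearquoz}, not in the proposition itself). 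Your approach is more explicit and makes (b) immediate by construction, at the cost of the extra intersection computation you flag. For part~(c) the paper deduces (i)$\iff$(ii) directly from~(b) (two $z$'s give the same $W$ iff $\overline{\theta}(z),\overline{\theta}(w)$ have the same minimal polynomial over~$k$) and then proves (ii)$\iff$(iii) via the surjection $\mathcal{D}(U)\twoheadrightarrow\Gal(\overline{k}/k)$; you instead establish (i)$\iff$(iii) directly through Galois conjugacy of extensions in the normal extension $\overline{K}(X)/K(X)$, which is a nice independent argument. One small gap: for the final count you need that \emph{every} extension of $W$ to $\overline{K}(X)$ lying over $U$ is of the form $U_{D_z}$, not just that one such exists; this follows from the paper's top-down argument (or from your bottom-up argument combined with the bijection, once you observe that $\overline{\pi}(\mathcal{W})$ must be $\overline{k}[t]_{(t-\theta')}$ for some root $\theta'$ of $f$).
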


\begin{proof}
Let $\mathcal{W}$ be an extension of $W$ to $\overline{K}(X)$ and let $U=\mathcal{W}\cap\overline{K}$; then, $U$ is an extension of $V$. The diagram \eqref{eq:subext} lifts to
\begin{equation}\label{eq:subext-ac}
\xymatrix{
U\ar@{^{(}->}[r]\ar@{>>}[d]_{\overline{\pi}} & \mathcal{W}\ar@{^{(}->}[r]\ar@{>>}[d]_{\overline{\pi}} & U_F\ar@{>>}[d]_{\overline{\pi}}\\
U/M_U=\overline{k}\ar@{^{(}->}[r]& \mathcal{W}/M_U\ar@{^{(}->}[r]& \overline{k}(t).
}
\end{equation}
By Proposition \ref{prop:linearquoz}, $\mathcal{W}$ is equal to  $U_{D_z}$, for some $z\in\inslim^u_F=\alpha+cU$, and thus $W=U_{D_z}\cap K(X)$, as desired.

\ref{prop:subext-nonac:ovtheta} If $U_{D_z}$ is an extension of $W$ to $\overline{K}(X)$, then $\overline{\pi}(U_{D_z})=\overline{k}[t]_{(t-\overline{\theta}(z))}$ is an extension of $\pi(W)=k[t]_{(f(t))}$ to $\overline{k}(t)$. It is straightforward to see that this implies that $t-\overline{\theta}(z)$ is a factor of $f(t)$ in $\overline{k}[t]$, i.e., that $\overline{\theta}(z)$ is a zero of $f(t)$.

\ref{prop:subext-nonac:equiv} The equivalence of \ref{prop:subext-nonac:equiv:Udz} and \ref{prop:subext-nonac:equiv:ovtheta} follows from the previous point.

\ref{prop:subext-nonac:equiv:ovtheta} $\iff$ \ref{prop:subext-nonac:equiv:decomp} There is a surjective map from the decomposition group $\mathcal{D}(U)$ of $U$ to the Galois group $\Gal(\overline{k}/k)$, where $\rho\in\mathcal{D}(U)$ goes to the map $\overline{\rho}$ sending $x\in k$ to $\pi(\rho(y))$, where $y$ satisfies $\pi(y)=x$ \cite[Chapt. V, \S 2.2, Proposition 6(ii)]{bourbaki-inglese}. Hence, if $\overline{\theta}(z)$ and $\overline{\theta}(w)$ are conjugates there is a $\overline{\rho}\in\Gal(\overline{k}/k)$ such that $\overline{\rho}(\overline{\theta}(z))=\overline{\theta}(w)$, and we have
\begin{align*}
\overline{\rho}\left(\overline{\pi}\left(\frac{z-\alpha}{c}\right)\right)=\overline{\pi}\left(\frac{w-\alpha}{c}\right) & \iff\overline{\pi}\left(\rho\left(\frac{z-\alpha}{c}\right)\right)=\overline{\pi}\left(\frac{w-\alpha}{c}\right)\\ &\iff\rho\left(\frac{z-\alpha}{c}\right)-\frac{w-\alpha}{c}\in M_U.
\end{align*}
Since $\alpha,c\in K$, the last condition holds if and only if $\rho(z)-w\in cM_U$. Conversely, if $\rho(z)-w\in cM_U$, then we can follow the same reasoning in the opposite order, and so $\overline{\theta}(z)$ and $\overline{\theta}(w)$ are conjugate over $k$.
\end{proof}

We conclude by reproving Ostrowski's Fundamentalsatz. Recall that, if $V$ has rank $1$, we can always consider $v$ as a (not necessarily surjective) map from $K\setminus\{0\}$ to $\insR$.

\begin{teor}\label{Ostrowski fundamentalsatz}
Suppose that $v$ is a valuation of rank $1$ and $K$ is algebraically closed. Let $w$ be an extension of $v$ to $K(X)$ of rank $1$. Then the following hold:
\begin{enumerate}[(a)]
\item there is a pseudo-convergent sequence $E=\{s_\nu\}_{\nu\in\Lambda}\subset K$ such that
\begin{equation*}
w(\phi)=\lim_{\nu\to\infty}v(\phi(s_\nu))
\end{equation*}
for every nonzero $\phi\in K(X)$;
\item if $V\subset W$ is not immediate, there is also a pseudo-divergent sequence $F=\{t_\nu\}_{\nu\in\Lambda}$ such that
\begin{equation*}
w(\phi)=\lim_{\nu\to\infty}v(\phi(t_\nu))
\end{equation*}
for every nonzero $\phi\in K(X)$.
\end{enumerate}
\end{teor}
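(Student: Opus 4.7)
The plan is to invoke Theorem~\ref{teor:fundamentalsatz} to write $W=V_E$ for some pseudo-monotone sequence $E\subset K$, then verify the limit formula via Proposition~\ref{values rat func on psm seq}. The only case needing extra work is when the main theorem produces a pseudo-stationary sequence, and there I would construct ad hoc a pseudo-convergent (resp.\ pseudo-divergent) sequence whose limit map still recovers $w$.

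\textbf{Step 1.} Since $K$ is algebraically closed and $V$ has rank one, $V/M$ is algebraically closed and $\Gamma_v$ is divisible, so $\widehat K$ is algebraically closed. Hence Theorem~\ref{teor:fundamentalsatz} applies and yields a pseudo-monotone sequence $E=\{s_\nu\}_{\nu\in\Lambda}\subset K$ with $W=V_E$.

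\textbf{Step 2 (strictly pseudo-monotone case).} Assume first that $E$ is either pseudo-convergent or pseudo-divergent. By Proposition~\ref{values rat func on psm seq}, for every nonzero $\phi\in K(X)$ and $\nu$ large enough,
\begin{equation*}
v(\phi(s_\nu))=\lambda\delta_\nu+\gamma,
\end{equation*}
with $\lambda=\degdom_E(\phi)\in\Z$ and $\gamma\in\Gamma_v$. Since $v$ has rank one and $\{\delta_\nu\}\subset\Gamma_v\subset\insR$ is monotone, it converges to some $\delta\in\insR$; the hypothesis that $V_E=W$ has rank one rules out both $\delta=\pm\infty$ and $\delta\in\Gamma_v$ in the algebraic-type case, because otherwise Proposition~\ref{prop:description}\ref{prop:description:alg} would force $\Gamma_{v_E}$ to have rank two. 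Under the embedding $\Gamma_{v_E}\hookrightarrow\insR$, a direct check from the definition of $V_E$ shows that $\Delta_E=v_E(X-\alpha)$ corresponds to $\delta$ for any pseudo-limit $\alpha\in K$; hence
\begin{equation*}
w(\phi)=v_E(\phi)=\lambda\Delta_E+\gamma=\lambda\delta+\gamma=\lim_{\nu\to\infty}v(\phi(s_\nu)).
\end{equation*}
In the pseudo-convergent transcendental-type subcase (which corresponds to the immediate extension), $\degdom_E(\phi)=0$ and $v(\phi(s_\nu))$ is eventually constant equal to $w(\phi)$, so the conclusion is immediate.

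\textbf{Step 3 (pseudo-stationary case).} Assume now that $E$ is pseudo-stationary. By Propositions~\ref{prop:limVE}\ref{prop:limVE:L2} and~\ref{prop:description}\ref{prop:description:monomiali}, $W=v_{\alpha,\delta}$ is a monomial valuation with $\alpha\in K$ and $\delta=v(c)\in\Gamma_v$. Since $K$ is algebraically closed of rank one, $\Gamma_v$ is divisible and hence dense in $\insR$, so I may pick $\{\delta_\nu\}_{\nu\in\Lambda}\subset\Gamma_v$ strictly increasing (resp.\ strictly decreasing) to $\delta$ and choose $s_\nu\in K$ with $v(s_\nu-\alpha)=\delta_\nu$. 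The resulting $F=\{s_\nu\}$ is pseudo-convergent (resp.\ pseudo-divergent) with pseudo-limit $\alpha$, gauge $\{\delta_\nu\}$, and $\Br(F)=cV$. Although $V_F\subsetneq W$ by Proposition~\ref{prop:subext-VF}, the pseudo-limits of $F$ form the set $\alpha+cV$, which coincides with $\{\beta\in K\mid w(X-\beta)=\delta\}$. Expanding $w=v_{\alpha,\delta}$ on a factorization $\phi(X)=c_\phi\prod_i(X-\beta_i)^{\epsilon_i}$ produces $w(\phi)=\lambda\delta+\gamma$ with exactly the same $\lambda,\gamma$ supplied by Proposition~\ref{values rat func on psm seq} for $F$; hence $w(\phi)=\lim v(\phi(s_\nu))$. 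The same construction with decreasing gauge provides the pseudo-divergent sequence required by (b); in the other non-immediate subcases, a pseudo-divergent sequence is either produced directly by Theorem~\ref{teor:fundamentalsatz}\ref{teor:fundamentalsatz:nonprinc} or obtained by this same procedure.

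The main obstacle is Step~3: Theorem~\ref{teor:fundamentalsatz} gives only a pseudo-stationary sequence with $V_E=W$, and the pseudo-convergent (resp.\ pseudo-divergent) sequence $F$ constructed above satisfies $V_F\subsetneq W$ strictly of higher rank, so one cannot simply invoke $w=v_F$. The content of the Fundamentalsatz in this case is that the limit formula computes the rank-one coarsening of $v_F$, and this coarsening must be shown to equal $w$ by the explicit computation sketched above, using the monomial description of $W$ and the density of $\Gamma_v$ in $\insR$.
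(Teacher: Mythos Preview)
Your approach mirrors the paper's: apply Theorem~\ref{teor:fundamentalsatz}, split into cases, and in the pseudo-stationary case build auxiliary strictly pseudo-monotone sequences with gauges converging to~$\delta$.

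There is one slip in Step~3: for the pseudo-divergent sequence with gauge $\delta'_\nu\searrow\delta$ you have $\Br(F)=cM$, not $cV$, so $\inslim_F=\alpha+cM\subsetneq\alpha+cV$ and the dominating degree $\lambda_F=\degdom_F(\phi)$ need not equal $\lambda_E=\degdom_E(\phi)$; your assertion that Proposition~\ref{values rat func on psm seq} supplies ``exactly the same $\lambda,\gamma$'' is therefore wrong. The paper repairs this with the finer formula (drawn from the proof of Proposition~\ref{prop:subext-VF})
\[
v(\phi(r_\nu))=\lambda_F\delta'_\nu+(\lambda_E-\lambda_F)\delta+\gamma,
\]
which still tends to $\lambda_E\delta+\gamma=w(\phi)$ as $\delta'_\nu\to\delta$. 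For part~(b) in the strictly pseudo-monotone case the paper also takes a cleaner route than your final sentence: once the rank hypotheses force $\Br(E)$ to be neither $cV$ nor $cM$ (Theorems~\ref{teor:rank-ac-pcv} and~\ref{teor:rank-ac-pdv}), it is both strictly divisorial and nonprincipal, so Lemma~\ref{lemma:carattbreadth} and Proposition~\ref{prop:equivalence} yield a pseudo-divergent $F$ with $V_F=W$ exactly, and no further limit computation is needed.
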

\begin{proof}
If $V\subset W$ is immediate, then by \cite[Theorems 1 and 3]{kaplansky-maxfield} $W=V_E$ for some pseudo-convergent sequence $E$ of transcendental type and $w(\phi)=v_E(\phi)=v(\phi(s_\nu))$ for $\nu\geq N(\phi)$.

Suppose now that $V\subset W$ is not immediate. By Theorem \ref{teor:fundamentalsatz}, there is a pseudo-monotone sequence $G\subset K$ such that $W=V_G$ with $\inslim_G\neq\emptyset$. We distinguish two cases.

Suppose first that $G$ is pseudo-stationary. Then, $\Br(G)=cV$, and $v_G(\phi)=\lambda_\phi\delta+\gamma$, where $\lambda_\phi=\degdom_G(\phi)$, $\delta=v(c)$ and $\gamma=v\left(\frac{\phi}{\phi_S}(\beta)\right)$ for some pseudo-limit $\beta$ of $G$ in $K$. Let $E=\{s_\nu\}_{\nu\in\Lambda}\subset K$ be a pseudo-convergent sequence such that $\inslim_E=\beta+cV=\inslim_G$ (Lemma \ref{lemma:carattbreadth}); then, $\degdom_E(\phi)=\degdom_G(\phi)$ and the gauge $\{\delta_\nu\}_{\nu\in\Lambda}$ of $E$ tends to $\delta$, the gauge of $G$. By Proposition \ref{values rat func on psm seq},
\begin{equation*}
\lim_{\nu\to\infty}v(\phi(s_\nu))=\lim_{\nu\to\infty}(\lambda_\phi\delta_\nu+\gamma)=\lambda_\phi\left(\lim_{\nu\to\infty}\delta_\nu\right)+\gamma=\lambda_\phi\delta+\gamma=v_G(\phi)=w(\phi),
\end{equation*}
and the claim is proved. In the same way, we can find a pseudo-divergent sequence $F=\{t_\nu\}_{\nu\in\Lambda}\subset K$ such that $\inslim_F=\beta+cM$; as in the proof of Proposition \ref{prop:subext-VF}, setting $\{\delta'_\nu\}_{\nu\in\Lambda}$ to be the gauge of $F$, we have (for large $\nu$)
\begin{equation*}
v(\phi(t_\nu))=\lambda'\delta'_\nu+(\lambda_\phi-\lambda')\delta+\gamma,
\end{equation*}
where $\lambda'=\degdom_F(\phi)$. Hence, $v(\phi(t_\nu))\to \lambda_\phi\delta+\gamma=w(\phi)$, as claimed.

Suppose now that $G$ is strictly pseudo-monotone, and let $\beta\in\inslim_G$. If $\Br(G)$ is equal to $cV$ or to $cM$ for some $c\in K$, then we can find a pseudo-stationary sequence $G'$ with breadth ideal $cV$ and having $\beta$ as a pseudo-limit; by the discussion at the beginning of the section and by Proposition \ref{prop:subext-VF}, $V_{G'}$ would properly contain $V_G$, against the fact that $V_G$ has rank one. Therefore, $\Br(G)$ is both strictly divisorial and nonprincipal; by Lemma \ref{lemma:carattbreadth} we can find a pseudo-convergent sequence $E$ and a pseudo-divergent sequence $F$ in $K$ such that $\inslim_E=\inslim_F=\inslim_G=\beta+\Br(G)$ (note that one between $E$ and $F$ could be taken equal to $G$). In particular, $\Br(E)=\Br(F)=\Br(G)$ and so $\delta_E=\delta_F=\delta$. 

Since $W=V_E$ has rank 1, by \cite[Theorem 4.9(c)]{PS} the valuation relative to $V_E$ is exactly the one mapping $\phi\in K(X)$ to
\begin{equation*}
\lambda\delta+\gamma=\lim_{\nu\to\infty}(\lambda\delta_\nu+\gamma)
\end{equation*}
where $\lambda=\degdom_E(\phi)=\degdom_F(\phi)$ and $\gamma=v\left(\frac{\phi}{\phi_S}(\beta)\right)$. Since $\delta$ is also the limit of $\delta'_\nu$, the claim is proved.
\end{proof}

\end{document}